\numberwithin{equation}{section}
\theoremstyle{plain}
\newtheorem{thm}{Theorem}[section]
\newtheorem{prop}[thm]{Proposition}
\newtheorem{cor}[thm]{Corollary}
\newtheorem{lem}[thm]{Lemma}
\theoremstyle{definition}
\theoremstyle{remark}
\theoremstyle{plain} \newtheorem*{GS2.1}{Theorem A}
\renewcommand{\geq}{\geqslant}
\renewcommand{\leq}{\leqslant}
\renewcommand{\mod}[1]{{\ifmmode\text{\rm\ (mod~$#1$)}\else\discretionary{}{}{\hbox{ }}\rm(mod~$#1$)\fi}}
\renewcommand{\Re}{\textup{Re }} % \textup prevents Re from being italicized in a thm
\renewcommand{\Im}{\textup{Im }}
\newcommand{\sumstar}[1]{\sideset{}{^*}\sum_{#1}}  % A restricted sum
\newcommand{\sumstarline}[1]{\left.\sum\right.^*_{#1}}  % in line display
\newcommand{\I}{\mathcal{I}}
\newcommand{\oh}{\mathcal{O}}
\newcommand{\n}{\mathcal{N}}
\newcommand{\N}{\mathbb{N}}
\newcommand{\Z}{\mathbb{Z}}
\newcommand{\Q}{\mathbb{Q}}
\newcommand{\R}{\mathbb{R}}
\newcommand{\C}{\mathbb{C}}
\newcommand{\A}{\mathfrak{a}}
\newcommand{\B}{\mathfrak{b}}
\newcommand{\frakc}{\mathfrak{c}}
\newcommand{\m}{\mathfrak{m}}
\newcommand{\f}{\mathfrak{f}}
\newcommand{\pr}{\mathfrak{p}}
\begin{document}

\title{$L$-functions with $n$-th order twists}

\author{Valentin Blomer}
\address{Mathematisches Institut, Bunsenstr. 3-5, 37073 G\"ottingen, Germany} \email{blomer@uni-math.gwdg.de}
\author{Leo Goldmakher}
\address{Department of Mathematics, University of Toronto, Toronto, ON, Canada} \email{leo.goldmakher@utoronto.ca}
\author{Beno\^it Louvel}
\address{Mathematisches Institut, Bunsenstr. 3-5, 37073 G\"ottingen, Germany} \email{blouvel@uni-math.gwdg.de}

\thanks{First author supported  by the Volkswagen Foundation and an ERC Starting Grant. Second author supported in part by an NSERC grant. Third author supported by the Volkswagen Foundation.}

\keywords{multiple Dirichlet series, subconvexity, functional equation, character sums, Gau{\ss} sums, large sieve}

\begin{abstract}
Let $K$ be a number field containing the $n$-th roots of unity for some $n \geq 3$. We prove a uniform subconvexity result for a family of double Dirichlet series built out of central values of Hecke $L$-functions of $n$-th order characters of $K$. The main new ingredient, possibly of independent interest, is a large sieve for $n$-th order characters. As further applications of this tool, we derive several results concerning $L(s,\chi)$ with $\chi$ an $n$-th order Hecke character: an estimate of the second moment on the critical line, a non-vanishing result at the central point, and a zero-density theorem.
\end{abstract}

\subjclass[2000]{Primary: 11R42, 11M41, Secondary: 11F66, 11L40}

\maketitle
%\tableofcontents

\section{Introduction} \label{sect:Intro}

Analytic properties of $L$-functions in the critical strip often reflect subtle arithmetic properties of the number theoretic object encoded in the coefficients. In particular the central value has received much attention. If one has a suitable family of $L$-functions, their central values can again be encoded into an $L$-function, and one obtains a multiple Dirichlet series. Typically such multiple Dirichlet series no longer have Euler products, but they may have other interesting structure such as a non-trivial group of functional equations.\\

One example of such a double Dirichlet series was investigated by Friedberg, Hoffstein, and Lieman in \cite{FHL}. Their construction (which we review below in detail) is somewhat technical, but the idea is straightforward. Fix $n \geq 3$, let $K$ be a (totally imaginary) number field of degree $d \, (\geq \phi(n))$ over $\Bbb{Q}$ containing the $n$-th roots of unity, and denote by $\oh$ the ring of integers of $K$. We fix a set $S$ of (bad) finite places of $K$ and a suitable ideal $\mathfrak{c}$ depending only on $K$. There is a family of $n$-th order Hecke characters $\chi_\A$ of $K$ indexed by integral ideals $\A \subseteq \oh$ coprime to $S$, which are a natural analogue of the family of quadratic Dirichlet characters
$\chi_{_{D}} = \left(\frac{D}{\cdot}\right)$. If $\A$ is squarefree, then the conductor of $\chi_{\A}$ is an ideal lying between $\A$ and $\mathfrak{c} \A$. We will discuss properties of these characters in detail in Section \ref{sect:char}. While the quadratic characters $\chi_{D}$ have been studied extensively,   little is known about their higher order relatives.\\

One can now construct a double Dirichlet series $Z(s,w)$ which (in the region of absolute convergence) looks roughly like $  \sum_\A L(s,\chi_\A) (\n\A)^{-w}$ where $\n$ denotes the norm of an ideal.
This double Dirichlet series has two obvious functional equations: one arising from the functional equation of $L(s,\chi_\A)$, and another from reciprocity of the character $\chi_\A$. These two functional equations generate many others, including one which takes $(s,w) \leftrightarrow (1-s,1-w)$. Our first  result is a subconvexity result for the function $Z(1/2, w)$ on the critical line $\Re w = 1/2$ (Theorem \ref{thm:sub}). In fact, we prove rather more: viewed as a function of two variables $s$, $w$, we deduce a \emph{uniform} subconvexity bound for $Z(s, w)$ on the plane
%lines 
$\Re s = \Re w = 1/2$ (Theorem \ref{thm:sub1}). The main new ingredient in our proofs of these subconvexity results is Theorem \ref{thm:thm3}, a large sieve over the family of $n$-th order Hecke characters $\chi_\A$.
To further illustrate the utility of this tool, we deduce several results concerning the behavior of the Hecke $L$-function $L(s,\chi_\A)$: an estimate of the second moment on the critical line (Corollary \ref{cor:kor2}), a non-vanishing result at the central point (Corollary \ref{cor:kor2a}), and a zero-density theorem for zeros off the critical line (Corollary \ref{cor:kor3}).\\

As is well-known in the theory of multiple Dirichlet series, in order to obtain the desired functional equations one needs to modify the objects in question by attaching some correction factors.
With this in mind, let $H_{\frakc}$ be the ray class group modulo $\frakc$, $R_{\frakc} := H_{\frakc} \otimes \Z/n\Z$, and let $\psi$ be a (unitary) id\`ele class character of conductor dividing $\frakc$ and order dividing $n$, that is, a character of $R_{\frakc}$. Let $S_{\mathfrak{a}}$ denote the set of prime ideals dividing  the conductor of $\chi_{\A}$.
For $s \in \C$ with sufficiently large real part define
\begin{equation}
    \label{eq:defL*}
    L^*(s, \psi, \A) = L_{S \cup S_{\A}}(s, \psi \chi_{\A}) \, \frak{A}(s, \psi, \A)
\end{equation}
where the subscript $S \cup S_{\A}$ indicates that the Euler factors at places in $S\cup S_{\A}$ have been removed, and where
\begin{equation}
    \label{eq:defa}
    \frak{A}(s, \psi, \A_1 \A_2^n) :=
    \sum_{\B_1 \B_2 \B_3 = \A_2}
    \frac{\chi_{\A}(\B_3)\mu(\B_3)\psi(\B_3)}{\n\B_1^{ns - n + 1} \n\B_3^s}
    \ll
    \n\A_2^{\max(0, n(1 - \Re s) - 1) + \varepsilon}
\end{equation}
(with $\A_1$ $n$-th power free) is a suitable correction factor at the ramified primes.  %, and we also use it for the norm of an integer $m \in \oh_S$ as the norm of the corresponding principal $\oh_S$-ideal, i.e. of the cardinality of the quotient $\oh_S / m\oh_S$.
For two characters $\psi, \psi'$ of $R_{\frakc}$ and $s, w$ with sufficiently large real part we define
\begin{equation}
    \label{eq:defz1}
    Z_1(s, w; \psi, \psi') := \sum_{\A \in \I(S)} \frac{L^{\ast}(s, \psi, \A) \psi'(\A)}{\n\A^w}
\end{equation}
where $\I(S)$ is the set of nonzero integral ideals coprime to $S$.
This function has been studied in detail in \cite{FHL}; we will review the theory in section
\ref{sect:DoubleDirichlet}. For the moment we just mention that this is the right object to consider with a clean set of functional equations. It has been conjectured in \cite{BBCFH} that $Z_1$ together with its companion function $Z_2$ defined in \eqref{eq:defz2} below is a residue of a Weyl group multiple Dirichlet series  of type $A_3^{(3)}$. The conjecture has been verified for $n=3$ in \cite{BB}. \\

Specializing to the central value $s=1/2$, the function $Z_1$ has a functional equation in $w$. The completed $L$-function looks roughly like $\Gamma(nw)^{d/2} Z_1(1/2, w; \psi, \psi')$
and is essentially invariant under $w \leftrightarrow 1-w$. A standard convexity argument shows
\[
    Z_1(1/2, w, \psi, \psi') \ll |w|^{\frac{nd}{4} + \varepsilon}, \quad \Re w = 1/2.
\]
Our first result is a power saving of this estimate and establishes a subconvexity result for this $L$-function.
\begin{thm}
    \label{thm:sub}
    With the notation developed above we have
    \[
    Z_1(1/2, w, \psi, \psi') \ll |w|^{\frac{nd}{4} - \frac{d}{12} + \varepsilon},
    \quad \Re w = 1/2.
    \]
    for any $\varepsilon > 0$.
\end{thm}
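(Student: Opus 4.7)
The approach is to combine approximate functional equations in both variables with the large sieve Theorem~\ref{thm:thm3}, applied on both sides of the reciprocity law for $n$-th power residue symbols. Applying the $w \leftrightarrow 1-w$ functional equation (with gamma factor $\Gamma(nw)^{d/2}$) via a standard contour shift gives an approximate functional equation
\[
    Z_1(1/2, w, \psi, \psi') = \sum_{\A \in \I(S)} \frac{L^*(1/2, \psi, \A)\,\psi'(\A)}{\n\A^w}\,V\!\left(\frac{\n\A}{T}\right) + (\text{dual term}),
\]
where $T \asymp |w|^{nd/2}$ is the square root of the analytic conductor in $w$ and $V$ is smooth and of rapid decay. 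Each $L^*(1/2, \psi, \A)$ in turn admits its own approximate functional equation of length $\asymp \sqrt{\n\A}$, coming from the Hecke functional equation of $L(s, \psi \chi_\A)$. Substituting, swapping the order of summation, and decomposing dyadically, the problem reduces to estimating bilinear sums
\[
    S(X, Y) = \sum_{\n\B \sim Y} \frac{c(\B)\,\psi(\B)}{\sqrt{\n\B}} \sum_{\n\A \sim X} \chi_\A(\B)\,\frac{\psi'(\A)}{\n\A^w}
\]
for dyadic $X \leq T$ and $Y \leq \sqrt{X}$ (with auxiliary smooth weights suppressed).

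The crucial step is to bound $S(X, Y)$ in two complementary ways and interpolate. Cauchy--Schwarz in $\B$ followed by Theorem~\ref{thm:thm3} applied in the $\A$-variable yields one estimate. For a sharper bound in complementary ranges, use the $n$-th power reciprocity law $\chi_\A(\B) = \omega(\A, \B)\,\chi_\B(\A)$ (with $\omega$ a unit-modulus factor involving local symbols at ramified places) to transform the inner character sum into one of modulus $\asymp \n\B$, then apply Theorem~\ref{thm:thm3} in the dual direction. Balancing the two bounds by splitting each dyadic pair $(X, Y)$ at the optimal threshold and summing over $Y \leq \sqrt{X} \leq \sqrt{T}$ should produce a savings of $T^{-1/(6n)} = |w|^{-d/12}$ over the convexity bound $T^{1/2} = |w|^{nd/4}$, yielding the theorem.

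The main obstacle will be controlling the reciprocity factor $\omega(\A, \B)$, which carries additive (Gauss-sum) data that can disrupt the multiplicative structure required by the large sieve on the dual side. Handling $\omega$ carefully---likely by averaging over residue classes modulo a suitable guide ideal containing $n\frakc$, or by appealing to a mild variant of Theorem~\ref{thm:thm3} that accommodates such extra twists---while still extracting the full savings $T^{-1/(6n)}$ will be the most delicate part of the argument. A secondary technical point is the smooth cutoff $W(\n\B^2/\n\A)$ arising from the $s$-aspect AFE, which couples $\n\A$ and $\n\B$ and must be separated (e.g., by Mellin inversion) before the large sieve is applied.
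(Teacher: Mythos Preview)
Your plan has a genuine gap: the two bounds you propose are not complementary. By the duality built into the large sieve (see \eqref{duality}), applying Theorem~\ref{thm:thm3} to $S(X,Y)$ after Cauchy--Schwarz in $\B$ gives the \emph{same} estimate as applying it after Cauchy--Schwarz in $\A$, namely $\ll (X+Y+(XY)^{2/3})^{1/2}$ up to $(XY)^\varepsilon$. The reciprocity step $\chi_\A(\B)=\omega(\A,\B)\chi_\B(\A)$ is merely a relabelling: by Lemma~\ref{lemma21} the factor $\omega$ depends only on the classes of $\A,\B$ in the finite group $R_\frakc$, so it is disposed of by splitting into residue classes and carries no Gauss-sum data whatsoever. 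Since $Y\le\sqrt{X}$ forces $X+Y+(XY)^{2/3}\asymp X$, both of your bounds collapse to $|S(X,Y)|\ll X^{1/2+\varepsilon}\le T^{1/2+\varepsilon}=|w|^{nd/4+\varepsilon}$, which is convexity. There is nothing to interpolate, and the claimed saving $T^{-1/(6n)}$ does not emerge from this mechanism.

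The paper's argument is structurally different. First one uses the functional equation \eqref{eq:fe11} to swap $s\leftrightarrow w$, reducing to a bound for $Z_1(1/2+it,1/2;\rho,\rho')$ with the large parameter $t=\Im w$ now in the \emph{first} slot. After the outer approximate functional equation one splits the resulting sum $D^\pm(t,0,P)$ as $D_1+D_2$ via an \emph{asymmetric} approximate functional equation in $s$ with a free length $R$ (see \eqref{eq:decompD}--\eqref{eq:defR}). The piece $D_1$ is bounded directly by Theorem~\ref{thm:thm3} (Lemma~\ref{lem:ref1}). The dual piece $D_2$ carries the root number $\epsilon(\psi\chi_\A)$ --- \emph{this} is the genuine Gauss-sum contribution you were anticipating, and it cannot be absorbed into the bilinear form; one must pass to the companion series $Z_2$ via \eqref{eq:fe12} before the large sieve applies (Lemma~\ref{lem:ref2}). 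The optimal choice $R=(1+|t|)^{d/2}\sqrt{P}$ balances $D_1$ against $D_2$, and the $(PR)^{1/3}$ term of the large sieve then yields the saving $(1+|t|)^{-d/12}$ over $C^{1/4}$. If the initial swap is omitted one has $t=0$, hence $R=\sqrt{P}$, and the argument degenerates to convexity --- which is exactly what your direct approach to $Z_1(1/2,w)$ would recover.
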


We emphasize that here and henceforth all implied constants may depend on $n$, $K$ (including $S$, $\mathfrak{c}$ etc.) and $\varepsilon$ even if this is not explicitly mentioned. With more work this bound can be improved; our aim was to provide a subconvexity result with as little technical work as possible. The functional equation for $Z_1(1/2, w; \psi, \psi')$ and the bound of Theorem \ref{thm:sub} are shadows of the general situation when $Z_1(s, w; \psi, \psi')$ is treated as a function of two complex variables $s$ and $w$. For clarity of exposition we give a simplified description of the situation and postpone precise statements to later sections. Let us assume for simplicity that $\psi, \psi'$ are trivial and drop them from the notation. In the region of absolute convergence, the double Dirichlet series \eqref{eq:defz1} looks roughly like
$\displaystyle \sum_{\A, \B} \chi_{\A}(\B) \n\A^{-s} \n\B^{-w}$. Applying reciprocity, we see that
\begin{equation}
    \label{eq:f1}
    Z_1(s, w) \approx Z_1(w, s).
\end{equation}
Alternatively, we can apply the functional equation in the numerator of \eqref{eq:defz1}. This will introduce $n$-th order Gau{\ss} sums, and therefore we need an auxiliary double Dirichlet series of the shape
\[
    Z_2(s, w) := \sum_{\A} \frac{D(w, \A)}{\n\A^s}, \quad
    D(w, \A) \approx \sum_{\B} \frac{\epsilon(\chi_{\B})\overline{\chi_{\B}}(\A)}{\n\B^w}
\]
where $\epsilon(\chi_{\B})$ is the normalized Gau{\ss} sum (of absolute value 1) appearing as the root number in the functional equation of $L(s, \chi_{\B})$. The function $D(w, \A)$ also has a functional equation, as it can essentially be realized as the Fourier coefficient of an Eisenstein series on the $n$-fold cover of $GL(2)$ over $K$. The theory has been developed in detail by Kazhdan and Patterson in \cite{KP}. We note already at this point an interesting asymmetry between $D(w, \A)$ and $L^*(s, \textbf{1}, \A)$. The function $L^*(s, \textbf{1}, \A)$ is a standard $GL(1)$ $L$-function over a number field with $d/2$ complex places,
%\marginpar{What if $K \neq \Q(\zeta_n)$?}
hence its archimedean local factor is essentially $\Gamma(s)^{d/2}$. On the other hand, $D(w, \A)$ is a more complicated object, and its archimedean local factor is essentially $\Gamma(w)^{(n-1)d/2}$.

We now give a simplified description of the functional equations of $Z_1$ and $Z_2$. The completed versions are very roughly
\begin{equation}
    \label{eq:complete}
    \begin{split}
        \Xi_1(s, w) & \approx \bigg(\Gamma(s)\Gamma\Big((n-1)(s+w-1)\Big)\Gamma(w)\bigg)^{d/2}  Z_1(s, w),\\
        \Xi_2(s, w) & \approx
        \bigg(\Gamma(s) \Gamma\Big((n-1)(w-1/2)\Big) \Gamma(s+w)\bigg)^{d/2} Z_2(s, w).
    \end{split}
\end{equation}
Then in addition to \eqref{eq:f1} one has
\begin{equation}
    \label{eq:f1+}
    \begin{split}
        &\Xi_2(1 - s, w + s - 1 / 2) \approx \Xi_1(s, w),\\
        & \Xi_2(s + w - 1/2, 1 - w) \approx \Xi_2(s, w).
    \end{split}
\end{equation}
These functional equations can be iterated and generate a finite group. In particular, we obtain
\begin{equation}
    \label{eq:f1++}
    \Xi_1 (s, w) \approx \Xi_1(1 - s, 1 - w), \quad \Xi_2(s, w) \approx \Xi_2(1 - s, 1 - w).
\end{equation}

There is no obvious concept of analytic conductor or convexity bound in the context of multiple Dirichlet series, but in view of the functional equation \eqref{eq:f1++}  and \eqref{eq:complete} it seems reasonable to measure the complexity of $Z_1(s, w)$ for $\Re s = \Re w = 1/2$ in terms of the quantity
\begin{equation}
    \label{eq:defC0}
    C :=  (|s| |s+w|^{n-1} |w|)^{d} .
\end{equation}
We expect a ``convexity" bound
\begin{equation}
    \label{eq:convexity}
    Z_1(s, w) \ll C^{1/4+\varepsilon} , \quad \Re s = \Re w = 1/2 .
\end{equation}
These notions agree with the classical ones if we specialize to the one variable case
$s = 1/2$ or $w = 1/2$. For illustrative purposes, we briefly sketch in Appendix I  a convexity argument which would yield the bound \eqref{eq:convexity}.
%After these motivational remarks we
We are now ready to state the following result, which improves on the convexity bound \eqref{eq:convexity} and generalizes Theorem \ref{thm:sub}.
\begin{thm}
    \label{thm:sub1}
    With $C$ as in \eqref{eq:defC0} we have
    \[
    Z_1(s, w; \psi, \psi') \ll C^{\frac{1}{4} - \frac{1}{12(n+1)} + \varepsilon}
    \]
    for $\Re s = \Re w = 1/2$ and any $\varepsilon > 0$.
\end{thm}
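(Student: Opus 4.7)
The strategy is to combine an approximate functional equation with the large sieve for $n$-th order characters (Theorem~\ref{thm:thm3}). Using the group of functional equations \eqref{eq:f1}, \eqref{eq:f1+}, \eqref{eq:f1++} we may reduce to a fundamental domain; by \eqref{eq:f1} we may in particular assume $|s| \leq |w|$. In this region, inserting the approximate functional equation on the critical plane expresses (up to dual terms coming from the other functional equations)
\[
Z_1(s, w; \psi, \psi') \approx \sum_{\A, \B} \frac{\psi'(\A)\psi(\B)\chi_{\A}(\B)}{\n\A^w \, \n\B^s}\, V(\n\A, \n\B; s, w),
\]
with $V$ a smooth cut-off supported on $\n\A \leq X$ and $\n\B \leq Y$, the parameters $X, Y$ being chosen so that the tails are negligible in terms of $C$ as in \eqref{eq:defC0}.

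The core estimate is an application of Cauchy--Schwarz to the outer $\A$-sum followed by Theorem~\ref{thm:thm3}. Since $\sum_{\n\A \leq X} \n\A^{-2\Re w}$ is logarithmic on $\Re w = 1/2$, the bilinear form above is bounded by
\[
Z_1(s, w; \psi, \psi') \ll (XY)^{\varepsilon} (X + Y)^{1/2} \, Y^{1/2 - \Re s} = (XY)^{\varepsilon} (X + Y)^{1/2}
\]
on the critical plane. This improves the trivial bound $(XY)^{1/2}$ precisely when $X$ and $Y$ are of different sizes, which by \eqref{eq:f1} can always be arranged by swapping the roles of $\A$ and $\B$ if necessary.

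To convert this into the uniform saving $C^{1/(12(n+1))}$ of Theorem~\ref{thm:sub1}, I would then optimize the truncation lengths of the approximate functional equation and balance the large sieve bound against the convexity bound \eqref{eq:convexity}. The exponent $1/(12(n+1))$ reflects the fact that $C$ contains the cross-factor $|s+w|^{(n-1)d}$ arising from the joint gamma factor $\Gamma((n-1)(s+w-1))^{d/2}$ in \eqref{eq:complete}. Because this factor couples $s$ and $w$ nontrivially, one cannot optimize the $s$- and $w$-aspects independently, and the one-variable saving of $1/12$ from Theorem~\ref{thm:sub} must be distributed across the $n+1$ effective scales $|s|, |s+w|, \ldots, |s+w|, |w|$ encoded in $C$.

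The principal obstacle is the careful handling of this joint $s+w$-scale together with the Gau{\ss}-sum side that arises when one applies \eqref{eq:f1+} to pass from $Z_1$ to $Z_2$. The functional equation \eqref{eq:f1+} introduces root numbers $\epsilon(\chi_{\B})$ of absolute value one, and the large sieve must be applied to the resulting bilinear form in $Z_2$ with these weights. Theorem~\ref{thm:thm3} accommodates arbitrary complex coefficients, so this presents no analytic obstruction, but the archimedean factor $\Gamma(w)^{(n-1)d/2}$ of $Z_2$ produces slightly different effective lengths and demands uniform bookkeeping in both $s$ and $w$. A secondary, and more routine, technicality is the correction factor $\frak{A}(s, \psi, \A)$ from \eqref{eq:defa}: on $\Re s = 1/2$ with $n \geq 3$ it is of size $\n\A_2^{\varepsilon}$ and is absorbed harmlessly into the outer normalization.
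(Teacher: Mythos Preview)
Your overall architecture---approximate functional equation plus Theorem~\ref{thm:thm3}, together with the swap \eqref{eq:fe11} to reduce to a favourable range---matches the paper's. However, two concrete points in your execution do not yet work.

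\textbf{The large sieve bound is misquoted.} You write the outcome of Cauchy--Schwarz plus Theorem~\ref{thm:thm3} as $(XY)^{\varepsilon}(X+Y)^{1/2}$, dropping the term $(XY)^{2/3}$ from Theorem~\ref{thm:thm3}. After Cauchy--Schwarz this becomes an additional $(XY)^{1/3}$, and in the paper's argument it is precisely this ``middle'' term that survives and determines the final exponent: one obtains $D_1, D_2 \ll C^{\varepsilon}\bigl(P^{1/2} + R^{1/2} + (PR)^{1/3}\bigr)$, and it is $(PR)^{1/3}$ that yields the saving $(1+|t|)^{-d/12}$, hence $C^{-1/(12(n+1))}$ after using $C \leq (1+|t|)^{(n+1)d}$ in the range $|t| \geq |u|$. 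Without this term you cannot recover the stated exponent.

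\textbf{The approximate functional equation is not a single bilinear cutoff.} The paper does not use one joint cutoff $V(\n\A,\n\B)$ with independent lengths $X,Y$. It first applies an AFE in the $w$-variable to $Z_1(1/2+it, w)$, producing a sum over $\A$ of length $P \leq C_1^{1/2+\varepsilon}$ with $C_1 = \bigl((1+|u|)(1+|u+t|)^{n-1}\bigr)^d$. Then, for each fixed $\A$, it applies a second AFE to $L^{\ast}(s,\psi,\A)$; the inner length is $R = (1+|t|)^{d/2}\sqrt{P}$, reflecting that the conductor of $L^{\ast}$ is $\asymp (1+|t|)^d \n\A$. The coupling $R = R(P)$ is essential: it is what makes the saving appear in the $t$-aspect rather than vanish upon ``optimizing $X,Y$''. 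The dual piece of this \emph{inner} AFE is what brings in $Z_2$ via \eqref{eq:fe12}, and its treatment (Lemma~\ref{lem:ref2}) requires the multiplicative decomposition of $g(\A,\B)$ from \eqref{eq:gauss} before the large sieve can be applied; your proposal acknowledges this side but does not indicate the mechanism.

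Finally, a minor point of orientation: the paper reduces via \eqref{eq:fe11} to $|t| \geq |u|$, the opposite of your $|s| \leq |w|$; either is fine by symmetry, but the bound above is written for the former.
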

\noindent
This provides a \emph{uniform} subconvexity result in all ranges of $\Im s$ and $\Im w$, even in ``exceptional" ranges like $\Im s = -\Im w$ where the conductor drops considerably. Again Theorem \ref{thm:sub1} may be improved with somewhat more work. The reader may recognize certain features in the proof of Theorem \ref{thm:sub1} from \cite{Bl}, where the case $n=2$ is treated.\\

One of the main new ingredients of the present paper is a tool of independent interest, a large sieve inequality for $n$-th order characters. Such results for $n=2$ and $n=3$ have been obtained by Heath-Brown \cite{HB, HB1}.\footnote{Baier and Young \cite{BY} have developed a variant of Heath-Brown's cubic large sieve over $\Q$, and used this to bound the second moment of $L$-functions associated to cubic characters.}
Recently, in
\cite[Theorem 1.1]{GZ}, a weaker analogue of Heath-Brown's results was proved for the case $n=4$. The following theorem, which is valid for all\footnote{for $n=2$ better bounds are proved in \cite{GL}.} $n \geq 3$, is of the same quality as \cite{HB1} and improves in particular the main result of \cite{GZ}.
\begin{thm}
    \label{thm:thm3}
    Let $M, N \geq 1/2$, and let $\lambda_{\A} \in \C$ be a sequence of complex numbers indexed by $\A \in \I(S)$. Then
    \[
    \sumstar{   \n\A \leq M} \,
    \Bigl| \sumstar{ \n\B\leq N} \lambda_{\B} \chi_{\A}(\B) \Bigr|^2
    \ll
    (MN)^\varepsilon (M+N +(MN)^{2/3})
    \sumstar{  \n\B \leq N} |\lambda_{\B} |^2
    \]
    for any $\varepsilon > 0$. Here and henceforth $\sumstarline{}$ indicates that the summation is restricted to squarefree ideals coprime to $S$.
\end{thm}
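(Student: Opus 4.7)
The plan is to follow the strategy of Heath-Brown's cubic large sieve \cite{HB1}, generalized to $n$-th order Hecke characters over $K$. Write $\mathcal{S}(M,N)$ for the left-hand side and $\|\lambda\|^2 := \sumstar{\n\B \leq N}|\lambda_\B|^2$. The bound $\mathcal{S}(M,N) \ll (MN)^\varepsilon (M+N)\|\lambda\|^2$ is essentially a restatement of the standard large sieve for Hecke characters over $K$: since the characters $\chi_\A$ with $\n\A \leq M$ have conductors dividing $\frakc\A$ of norm $\ll M$, the classical Hecke-character large sieve applies directly.

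The novel ingredient is the $(MN)^{2/3}$ bound. I would open the square to obtain
\[
\mathcal{S}(M,N) = \sumstar{\n\B_1, \n\B_2 \leq N} \lambda_{\B_1}\overline{\lambda_{\B_2}} \sumstar{\n\A \leq M} \chi_\A(\B_1)\overline{\chi_\A(\B_2)},
\]
and analyze the inner $\A$-sum using two tools: (i) $n$-th power reciprocity, which for squarefree coprime $\A,\B$ expresses $\chi_\A(\B) = \omega(\A,\B)\,\chi_\B(\A)$, with $\omega(\A,\B)$ a root of unity depending on residue data modulo a fixed ideal dividing a power of $\frakc$ and on angular data at the archimedean places; and (ii) Poisson summation over the lattice $\oh$. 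After splitting $\A$ into $O(1)$ residue and angular sectors so as to fix $\omega$, reciprocity turns the inner sum into one involving $(\chi_{\B_1}\overline{\chi_{\B_2}})(\A)$ weighted by a smooth cutoff. Poisson summation then produces a dual expression weighted by $n$-th order Gauss sums of the character $\chi_{\B_1}\overline{\chi_{\B_2}}$, whose conductor divides $\frakc\B_1\B_2$. Bounding these Gauss sums by $\n(\text{cond})^{1/2}$ and applying Cauchy--Schwarz in $(\B_1,\B_2)$ yields, after balancing parameters, the claimed $(MN)^{2/3}$ bound.

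The principal obstacle is the control of \emph{imprimitive} Gauss sums: when $\B_1$ and $\B_2$ share common prime factors, the character $\chi_{\B_1}\overline{\chi_{\B_2}}$ has conductor strictly smaller than $\n(\B_1\B_2)$, and the associated Gauss sum can vanish or be unusually small. This is precisely the phenomenon encoded by the correction factor $\frak{A}$ of \eqref{eq:defa}, and handling it requires a careful decomposition of each squarefree $\B_i$ into factors ``primitive'' and ``imprimitive'' relative to the pairing $(\B_1,\B_2)$, analogous to the $n$-th-power-free decomposition $\A_1\A_2^n$ appearing in \eqref{eq:defa}. A secondary issue is that the cocycle $\omega(\A,\B)$ at archimedean places depends on angular data modulo $n$-th roots of unity; since it takes only $O(1)$ values, splitting the $\A$-sum into angular sectors resolves this at the cost of an implicit constant.
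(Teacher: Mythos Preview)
Your proposal has two substantial gaps.

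First, the claim that the term $M+N$ is ``essentially a restatement of the standard large sieve for Hecke characters'' is incorrect. The classical Hecke-character large sieve over all primitive characters of conductor $\leq Q$ yields a factor $Q^2+N$; with $Q\asymp M$ this gives only $M^2+N$. Restricting to the thin family $\{\chi_\A : \n\A\leq M\}$ of cardinality $\asymp M$ does not automatically buy you an extra factor of $M$, because the large sieve inequality is an upper bound, not an identity you can restrict. In fact the paper's starting point is precisely the bound $(E_2)$, namely $B_1^j(M,N)\ll (MN)^\varepsilon(M^2+N)$, and the entire content of the theorem is to improve the exponent on $M$ from $2$ down to $4/3$ (after which $M^{4/3}\leq M+(MN)^{2/3}$ in both ranges $M\lessgtr N$, by duality).

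Second, and more seriously, the mechanism you describe---a single application of reciprocity followed by Poisson summation, Gauss sum bounds, and Cauchy--Schwarz---does not produce the exponent $2/3$. That exponent is not the result of ``balancing parameters'' in a one-shot argument; it is the \emph{fixed point of an iteration}. The actual proof (following Heath-Brown) introduces auxiliary norms $B_1^j, B_2^j, B_3^j$ and establishes a chain of inequalities (via reciprocity, a smoothing step, Poisson, and a decomposition $\A=\A_1\A_2^2\cdots\A_n^n$ that handles the squarefree restriction) which together show that the hypothesis
\[
(E_\alpha):\qquad B_1^j(M,N)\ll (MN)^\varepsilon\bigl(M^\alpha+N+(MN)^{2/3}\bigr)
\]
implies $(E_{2-2/(3\alpha-1)})$. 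Iterating from $\alpha=2$ drives $\alpha$ to the fixed point $4/3$, and it is this recursion---not a direct estimate---that generates the $(MN)^{2/3}$ term. Your outline contains neither the recursive structure nor the $n$-th-power decomposition step (Lemma~\ref{lem:B2-B1} in the paper) that makes it work. A minor additional point: since $K$ is totally imaginary and the $\chi_\A$ have trivial infinity type, the reciprocity cocycle depends only on classes in the finite group $R_\frakc$, so the ``archimedean angular sector'' discussion is not relevant here.
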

As in \cite{HB1}, the term $(MN)^{2/3}$ is not optimal and can most likely be removed. Nevertheless, Theorem \ref{thm:thm3} can be useful in various situations, as the following applications demonstrate.

\begin{cor}
    \label{cor:kor2}
    For $N \geq 1$, $t \in \R$ and $\varepsilon > 0$, one has with the notation developed so far
    \[
    \sum_{\substack{\A \in \I(S) \\ \n\A \leq N}} |L(1/2+it, \chi_{\A})|^2
    \ll
    \left( N (1+|t|)^{d/2}\right)^{1+\varepsilon} .
    \]
\end{cor}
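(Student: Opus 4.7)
I would deduce Corollary \ref{cor:kor2} from Theorem \ref{thm:thm3} by the classical route: approximate functional equation combined with the large sieve. Since the conductor of $\chi_\A$ divides $\A\frakc$, the analytic conductor of $L(s,\chi_\A)$ at $s=\tfrac12+it$ is $\ll \n\A(1+|t|)^{d/2}$. A standard approximate functional equation, balanced at $Y_0 = \sqrt{N(1+|t|)^{d/2}}$, expresses $L(\tfrac12+it,\chi_\A)$ (together with a dual term of the same shape) as a smoothly weighted sum of $\chi_\A(\B)/\n\B^{1/2+it}$ of effective length $\leq Y_0$. After a dyadic decomposition in the length $Y\leq Y_0$, the task reduces to proving
\[
\sum_{\n\A\leq N}\bigg|\sum_{\B}\frac{\chi_\A(\B)\,V(\n\B/Y)}{\n\B^{1/2+it}}\bigg|^{2} \ll (N(1+|t|))^\varepsilon \, N(1+|t|)^{d/2}
\]
for each dyadic $Y\leq Y_0$ and a fixed smooth compactly supported weight $V$.

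To apply Theorem \ref{thm:thm3} I need both $\A$ and $\B$ restricted to squarefree ideals. Write uniquely $\A=\A'\A''$ and $\B=\B'\B''$ with $\A',\B'$ squarefree, $\A'',\B''$ squarefull, and $(\A',\A'')=(\B',\B'')=1$. Multiplicativity of the $n$-th order residue symbol in each variable gives
\[
\chi_\A(\B) = \chi_{\A'}(\B')\,\chi_{\A'}(\B'')\,\chi_{\A''}(\B')\,\chi_{\A''}(\B'').
\]
For fixed $\A''$ and $\B''$ the factors $\chi_{\A''}(\B')\chi_{\A''}(\B'')$ are absorbed into a new unimodular coefficient sequence indexed by $\B'$. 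The remaining cross-factor $\chi_{\A'}(\B'')$ depends on the outer variable $\A'$ but is unimodular, and I handle it via Minkowski's inequality in $\ell^2(\A)$ applied to the $\B''$-sum, which costs only a harmless logarithm because squarefull ideals are sparse. For each fixed pair $(\A'',\B'')$ I then apply Theorem \ref{thm:thm3} to the innermost sum over squarefree $\A'$ with $\n\A'\leq N/\n\A''$ and squarefree $\B'$ with $\n\B'\leq Y/\n\B''$, observing that the relevant coefficients satisfy $|\lambda_{\B'}|\ll 1/\sqrt{\n\B'\n\B''}$.

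Summing the resulting bound over squarefull $\A''$ with $\n\A''\leq N$ and squarefull $\B''$ with $\n\B''\leq Y$, and using that $\sum_{\A''\text{ sqfull}}(\n\A'')^{-s}$ converges for $\Re s > 1/2$ together with the counting $\#\{\A''\text{ sqfull}:\n\A''\leq N\}\ll N^{1/2}$, the three terms $N/\n\A''$, $Y/\n\B''$ and $(NY/(\n\A''\,\n\B''))^{2/3}$ from Theorem \ref{thm:thm3} contribute respectively $\ll N$, $\ll YN^{1/2}$ and $\ll (NY)^{2/3}$, up to logarithms. Substituting $Y\leq Y_0 = \sqrt{N(1+|t|)^{d/2}}$ each of these is bounded by $N(1+|t|)^{d/2}$ (with room to spare in the $t$-aspect), yielding the claimed estimate. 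The main technical point is the careful treatment of the cross-character factor $\chi_{\A'}(\B'')$ without loss; Minkowski in $\ell^2_\A$ suffices because the squarefull ideals form a very sparse set whose harmonic sums converge rapidly.
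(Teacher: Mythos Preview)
Your approach is essentially the same as the paper's: approximate functional equation, squarefree/squarefull decomposition in both variables, then Theorem \ref{thm:thm3}. The paper pulls the squarefull part of $\B$ out by Cauchy--Schwarz rather than Minkowski, and uses the slightly finer decomposition $\A=\A_0\A_1\A_2^n$ (squarefree $\times$ squarefull-$n$-th-power-free $\times$ $n$-th power) with a conductor-dependent cutoff; this lets them prove the marginally stronger weighted estimate \eqref{eq:sharper}, which they need for Corollary \ref{cor:kor2a}. For the bare statement of Corollary \ref{cor:kor2} your argument suffices.

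One slip to correct: the archimedean part of the analytic conductor is $(1+|t|)^{d}$, not $(1+|t|)^{d/2}$, since each of the $d/2$ complex places contributes a factor $(1+|t|)^{2}$. Consequently the true balanced cutoff is $Y_0 = \sqrt{N(1+|t|)^{d}} = N^{1/2}(1+|t|)^{d/2}$, as in the paper. With this correct $Y_0$ your second term $YN^{1/2}$ becomes exactly $N(1+|t|)^{d/2}$, so the bound is tight there rather than having ``room to spare in the $t$-aspect''; the argument still goes through, but only just.
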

\noindent This as  strong as Lindel\"of with respect to $\text{cond}(\chi_{\A})$ together with the convexity bound with respect to $t$. For fixed $t$, this result is not new. In fact, an asymptotic formula for a smoothed second moment has been established in \cite[Theorem 4.1]{Di}. In principle this can certainly be made uniform in $t$, but the outcome is not clear a priori. Our method is comparatively short and elementary and gives a uniform upper bound. %The same argument can also be used to prove \eqref{eq:assump}.

\begin{cor}
    \label{cor:kor2a}
    For $N \geq 1$ sufficiently large and $\varepsilon > 0$ we have %\marginpar{Should we make some remarks after Corollary \ref{cor:kor2a}?}
    \[
    \#\{ \A \in \I(S)  \mid  \n\A \leq N, \: L(1/2, \chi_{\A}) \neq 0\}
    \gg
    N^{1-\varepsilon} .
    \]
\end{cor}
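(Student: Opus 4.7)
The plan is to combine an asymptotic formula for a smoothed first moment of $L(1/2,\chi_{\A})$ with the upper bound of Corollary \ref{cor:kor2} through the standard Cauchy--Schwarz non-vanishing argument. Fix a smooth nonnegative function $\phi$ with compact support in $[1,2]$ satisfying $\int\phi>0$, take $\psi=\psi'=\textbf{1}$ (suppressed from the notation below), and consider
\[
M_1(N) := \sum_{\A\in\I(S)}\mu^2(\A)\,L^{\ast}(1/2,\textbf{1},\A)\,\phi(\n\A/N).
\]
On squarefree $\A$ the correction factor $\frak{A}$ in \eqref{eq:defa} equals $1$ and the Euler factors removed in the definition of $L^{\ast}$ are all nonzero at $s=1/2$, so on the support of $\mu^2(\A)$ we have $L^{\ast}(1/2,\textbf{1},\A)\neq 0$ if and only if $L(1/2,\chi_{\A})\neq 0$. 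It therefore suffices to prove $M_1(N)\gg N$ and then to invoke Corollary \ref{cor:kor2}.

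To produce the asymptotic, I will express $M_1(N)$ via Mellin inversion as a contour integral of $Z_1(1/2,w;\textbf{1},\textbf{1})\,\widetilde{\phi}(w)\,N^w$ on $\Re w=2$ (after using M\"obius to remove the squarefree condition, whose non-squarefree remainder contributes lower-order error terms from thinner generating series) and then shift the contour to $\Re w=1/2$. In the region of absolute convergence one has, schematically,
\[
Z_1(1/2,w) \approx \sum_{\A,\B\in\I(S)}\frac{\chi_{\A}(\B)}{\n\A^{w}\,\n\B^{1/2}},
\]
and isolating the contribution of $\B=\mathfrak{b}^n$, for which $\chi_{\A}(\B)=\chi_{\A}(\mathfrak{b})^n=1$ whenever $\A$ is coprime to $\mathfrak{b}$, produces a factor $\zeta_K(w)\,\zeta_K(n/2)$ and hence a simple pole of $Z_1(1/2,w)$ at $w=1$ with residue $\asymp\kappa_K\,\zeta_K(n/2)>0$ (up to non-vanishing local factors at $S$; recall $n\geq 3$, so $\zeta_K(n/2)$ converges). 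The shifted integral on $\Re w=1/2$ is
\[
\ll N^{1/2}\int_{\R}\bigl|Z_1(1/2,1/2+it)\bigr|\,\bigl|\widetilde{\phi}(1/2+it)\bigr|\,dt \ll N^{1/2+\varepsilon}
\]
by Theorem \ref{thm:sub} together with the rapid decay of $\widetilde{\phi}$, and any further poles encountered in $1/2<\Re w<1$ (coming from the Eisenstein/Kubota structure of the companion series $Z_2$) contribute terms of size $N^{1-\delta}$ for some $\delta>0$. Consequently $M_1(N)=cN+O(N^{1-\delta})\gg N$ for all sufficiently large $N$.

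Cauchy--Schwarz then yields
\[
M_1(N)^2 \leq \#\bigl\{\A\in\I(S):\n\A\leq 2N,\,\mu^2(\A)=1,\,L^{\ast}(1/2,\textbf{1},\A)\neq 0\bigr\}\cdot\sum_{\A}\mu^2(\A)\,|L^{\ast}(1/2,\textbf{1},\A)|^2\phi(\n\A/N),
\]
and Corollary \ref{cor:kor2} (combined with the fact that $|L^{\ast}/L|\ll\n\A^{\varepsilon}$ on squarefree $\A$) bounds the second factor by $\ll N^{1+\varepsilon}$. This gives the first factor $\gg N^{1-\varepsilon}$, and by the equivalence $L^{\ast}(1/2,\textbf{1},\A)\neq 0\Leftrightarrow L(1/2,\chi_{\A})\neq 0$ on squarefree $\A$, the corollary follows.

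The main obstacle is the analytic step: rigorously establishing the meromorphic continuation of $Z_1(1/2,w)$ past $\Re w=1/2$ and verifying that its pole at $w=1$ has a nonvanishing residue. Both points depend on the description of $Z_1$ developed in Section \ref{sect:DoubleDirichlet}, and the heuristic residue computation above must be carried out carefully to ensure that the correction factor $\frak{A}$ and the off-diagonal contributions do not accidentally cancel the main term. Once this analytic input is in place, the remainder of the proof is mechanical.
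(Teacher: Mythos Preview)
Your strategy is the paper's strategy: Cauchy--Schwarz with a first moment lower bound and a second moment upper bound. The differences are in the implementation.

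For the first moment, the paper does not re-derive anything: it simply invokes \cite[Theorem~1.3]{FHL}, which gives an asymptotic formula for $\sum_{\A\in\I(S)} L^{\ast}(1/2,\textbf{1},\A)(1-\n\A/N)^r$ for $r$ large enough. Your contour-shift argument would reproduce a version of this, and your identification of the secondary pole at $w=1/2+1/n$ (contributing $N^{1/2+1/n}$) is correct, but as you acknowledge, the nonvanishing of the residue at $w=1$ is the whole content of that theorem. There is no reason to redo it.

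Your insertion of $\mu^2(\A)$ is an unnecessary complication and is the weakest part of the write-up. Removing the squarefree constraint by M\"obius forces you to control $\sum_{\mathfrak{d}^2\mid\A} L^{\ast}(1/2,\textbf{1},\A)\phi(\n\A/N)$ for each $\mathfrak{d}$, and $\chi_{\mathfrak{d}^2\B}$ is not cleanly related to $\chi_{\B}$; your assertion that these are ``lower-order error terms from thinner generating series'' would need real work. The paper avoids this entirely by summing over all $\A\in\I(S)$ and using only the one-sided implication $L^{\ast}(1/2,\textbf{1},\A)\neq 0\Rightarrow L(1/2,\chi_{\A})\neq 0$, which holds for every $\A$, not just squarefree ones.

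For the second moment the paper appeals to \eqref{eq:neededlater}, which bounds $\sum|L^{\ast}(1/2,\textbf{1},\A)|^2$ directly (this is the point of the extra weight $(\n\A_2)^{n-2+\varepsilon}$ in \eqref{eq:sharper}). Your route through Corollary~\ref{cor:kor2} plus $|L^{\ast}/L|\ll\n\A^{\varepsilon}$ on squarefree ideals also works, but again the squarefree restriction is what forces this detour.
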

\noindent For $n=3$ this was proved in \cite{L}. %This result should be compared to that of Luo \cite{L}, who proves... ?
%Also, we have a second moment, and the first moment of FHL.\\

\begin{cor}
    \label{cor:kor3}
    For $1/2 < \sigma \leq 1$,  $T \geq 1$ and a squarefree integral ideal $\A$ coprime to $S$ let $N(\sigma, T, \A)$ be the number of zeros $\rho = \beta + i \gamma$ of
    $L(s, \chi_\A)$ in the rectangle $\sigma \leq \beta \leq 1$, $|\gamma| \leq T$. Then
    \[
    \sumstar{  \n\A \leq N} N(\sigma, T, \A)
    \ll
    N^{g(\sigma)} T^{1+ d\frac{1-\sigma}{3-2\sigma}} (NT)^{\varepsilon}, \quad
    %\]
   % where
   % \[
    g(\sigma) :=
    \begin{cases}
       % \displaystyle
       \frac{2(10\sigma-7)(1-\sigma)}{24\sigma - 12\sigma^2 - 11}, & 5/6 < \sigma \leq 1,\\[0.2cm]
      % \displaystyle
      \frac{8(1-\sigma)}{7-6\sigma}, & 1/2 <  \sigma \leq 5/6 .
    \end{cases}
       \]
\end{cor}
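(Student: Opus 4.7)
The plan is to follow the classical zero-detection method of Montgomery and Huxley, using Theorem \ref{thm:thm3} as the main large sieve input in the $\A$-aspect and standard mean-value estimates for Dirichlet polynomials in the $t$-aspect.  For each squarefree $\A$ coprime to $S$ and each zero $\rho = \beta + i\gamma$ of $L(s, \chi_\A)$ with $\beta \geq \sigma$ and $|\gamma| \leq T$, I would introduce a truncated mollifier
\[
  M_X(s, \chi_\A) := \sum_{\n\B \leq X} \mu(\B)\chi_\A(\B)\n\B^{-s}.
\]
A Perron-type contour shift of $L(s,\chi_\A) M_X(s,\chi_\A) - 1$ against a rapidly decaying Mellin kernel yields at each such zero a pair of Dirichlet polynomial inequalities
\[
  \Bigl|\sum_{X < \n\B \leq Y} a(\B) \chi_\A(\B) \n\B^{-\rho}\Bigr| \gg 1
  \quad\text{or}\quad
  \Bigl|\sum_{\n\B \leq Z} b(\B) \chi_\A(\B) \n\B^{-\rho}\Bigr| \gg (\log NT)^{-O(1)},
\]
with $|a|, |b| \ll (\n\B)^\varepsilon$, where $Y = X^{O(1)}$ and $Z$ is essentially the length $\sqrt{\n\A}\,(1+|t|)^{d/2}$ of the approximate functional equation for $L(\rho,\chi_\A)$; this length is responsible for the $T^{d/2}$ contribution, since $K$ has $d/2$ complex places.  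A standard spacing argument allows me to replace the set of zeros by a subset $\{\gamma_j\}$ that is $\gg 1$-spaced in $\gamma$, at the cost of an extra factor $(\log NT)^{O(1)}$.

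Squaring these inequalities and summing over $\A$ and $j$ reduces the entire count $\sumstar{\n\A \leq N} N(\sigma, T, \A)$ to dyadic expressions of the form
\[
  M^{-2\sigma} \sumstar{\n\A \leq N} \sum_{j} \Bigl|\sum_{\n\B \sim M} c_j(\B) \chi_\A(\B)\Bigr|^2,
  \qquad |c_j(\B)| \ll (\n\B)^\varepsilon,
\]
for dyadic $M \leq \max(Y, Z)$.  Exchanging the order of summation, I would apply Theorem \ref{thm:thm3} to the $\A$-sum to produce a factor $(M + N + (MN)^{2/3})(NT)^\varepsilon$, and then control the remaining sum over the well-spaced $\gamma_j$ by the Montgomery--Vaughan mean value theorem for Dirichlet polynomials, which introduces a factor of order $T + M$.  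Combining this with the $T^{d/2}$ arising from the length of the approximate functional equation and carrying out a standard convexity-in-$\sigma$ interpolation on the $t$-side is what produces the $T$-exponent $1 + d(1-\sigma)/(3-2\sigma)$ claimed in the corollary.

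The main obstacle is the final parameter optimization: the mollifier length $X$, the zero-detector lengths $Y$ and $Z$, and the dyadic parameter $M$ all have to be balanced simultaneously against the three competing terms $M$, $N$, $(MN)^{2/3}$ in Theorem \ref{thm:thm3}, together with the two Dirichlet polynomials produced by the zero detector (which typically have very different lengths).  The threshold $\sigma = 5/6$ in the statement is precisely the value at which the optimal $M$ shifts the dominant term in the large sieve from the diagonal $N$-term, yielding the simpler exponent $8(1-\sigma)/(7-6\sigma)$ for $1/2 < \sigma \leq 5/6$, to the off-diagonal $(MN)^{2/3}$-term, yielding the more complicated rational function for $5/6 < \sigma \leq 1$.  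Performing this optimization cleanly while preserving uniformity in both $N$ and $T$ is the most delicate step of the argument.
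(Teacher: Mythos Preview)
Your overall strategy --- zero detection via the mollifier $M_X$ and the large sieve of Theorem \ref{thm:thm3} --- is the right one, and is what the paper does.  But two points in your execution diverge from the paper in ways that matter.

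\textbf{The second alternative is not a Dirichlet polynomial at $\rho$.}  Following Heath--Brown, the paper does not produce a second inequality of the form $\bigl|\sum_{\n\B \leq Z} b(\B)\chi_\A(\B)\n\B^{-\rho}\bigr| \gg 1$.  Instead the second alternative is an integral of $|L_S(1/2+iu,\chi_\A) M_X(1/2+iu,\chi_\A)|$ on the critical line, and after summing over well-spaced zeros and over $\A$ this becomes the first term in \eqref{eq:zerodens}.  The paper then bounds this term by Cauchy--Schwarz, splitting $\sum_\A |L M_X|$ into $\bigl(\sum_\A |L|^2\bigr)^{1/2} \bigl(\sum_\A |M_X|^2\bigr)^{1/2}$.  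The first factor is controlled by Corollary~\ref{cor:kor2} (the second moment of $L$), which you never invoke; the second factor by Theorem~\ref{thm:thm3}.  This is the step that produces the $(NT^{d/2})^{1/2}$ in \eqref{eq:term2}, and without it the optimization leading to the specific exponent $g(\sigma)$ does not close as stated.

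\textbf{Montgomery--Vaughan cannot be applied after Theorem \ref{thm:thm3}.}  Once you apply the large sieve to the $\A$-sum, the upper bound is $(M+N+(MN)^{2/3}) \sum_{\n\B \sim M} |c_j(\B)|^2$.  Since $c_j(\B) = a(\B)\n\B^{-i\gamma_j}$, the moduli $|c_j(\B)|^2 = |a(\B)|^2$ are independent of $j$, so the remaining $j$-sum is trivially $\ll T$ and there is no Dirichlet polynomial left on which to use Montgomery--Vaughan.  In fact this trivial factor $T$ is exactly what the paper uses (the $T$ in front of the parenthesis in \eqref{eq:zerodens}); the authors explicitly note that the $T$-exponent is not optimized.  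The $T^{d(1-\sigma)/(3-2\sigma)}$ then arises not from any interpolation but simply from the $T^{d/4}$ contributed by Corollary~\ref{cor:kor2} together with the choice of $Y$ with $T$-part $T^{d/(6-4\sigma)}$.
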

\begin{center}
\includegraphics[width = 6cm]{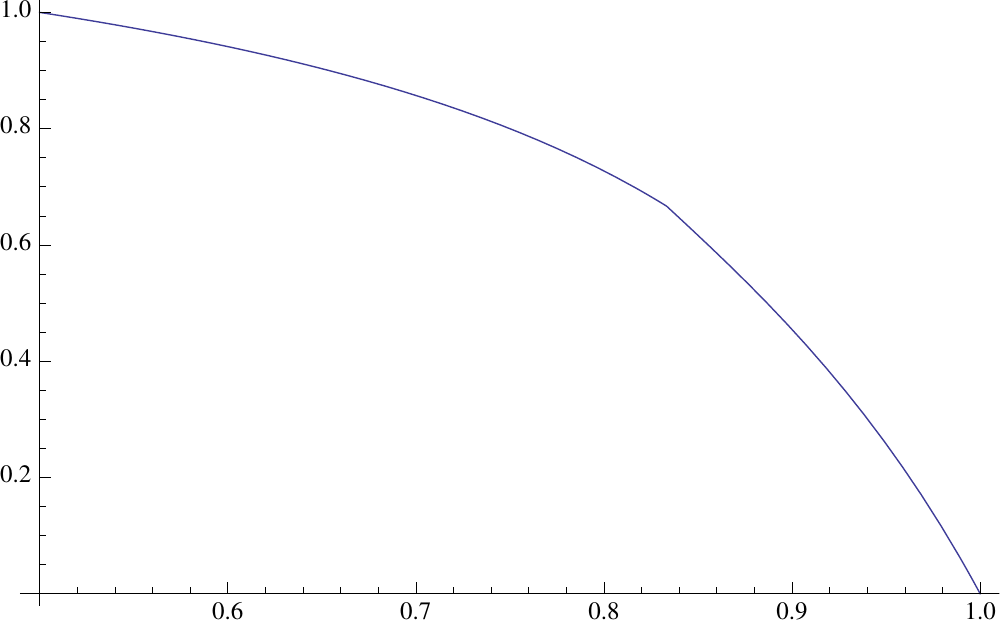}\\
Plot of $g(\sigma)$ with $1/2 \leq \sigma \leq 1$.
\end{center}
$\\$

This is relatively weak in the $T$ aspect (and we have not optimized the $T$-exponent), but non-trivial in the $N$-aspect for any $\sigma > 1/2$. In particular, this shows that for any $1/2 < \sigma \leq 1$ there are at most $N^{g(\sigma)+\varepsilon}$ characters $\chi_\A$ with
$\n\A \leq N$ such that $L(s, \chi_\A)$ has a \emph{real} zero in the segment $[\sigma, 1]$. A similar zero density result in the special case $n=3$ that is weaker in the $N$-aspect and stronger in the $T$-aspect was proved in \cite{X}, but the proof appears to be incorrect\footnote{The argument between (9) and (10) in \cite{X} would prove the Lindel\"of hypothesis in the $t$-aspect.}.\\

By a standard procedure (see e.g.\ \cite{Ro, X}), Theorem \ref{thm:thm3} and Corollary \ref{cor:kor3} can be used to determine completely the statistical properties of
$L(1, \chi_\A)$ and in particular compute all its moments. We leave this and further applications of the $n$-th order large sieve to the interested reader.

%In the next section, we develop a Poisson summation formula for the characters $\chi_\A$. This will play a crucial role in our proof of the $n$-th order large sieve, which we complete in section \ref{sect:LargeSieve}. In the final section we deduce our main application, Theorem \ref{thm:sub1}, as well as Corollaries \ref{cor:kor2}--\ref{cor:kor3}.

\noindent\\
\textbf{Acknowledgments.} The second author is grateful to the mathematics department at the University of G\"ottingen for its hospitality. The third author thanks the Centre Interfacultaire Bernoulli at the Ecole Polytechnique F\'ed\'erale de Lausanne, for its hospitality during the semester on Group Actions in Number Theory.

\section{The $n$-th order residue symbol}\label{sect:char}

%The construction of the $n$-th order character $\chi_{\A}$ is described in detail in several places (e.g.\ \cite{FF, FHL, GL}); we quote here only the key points and refer the interested reader to these references.

Starting from the $n$-th order power residue symbol $(a/.)$ attached to the field extension $K(a^{1/n})/K$, Fisher and Friedberg \cite{FF} constructed\footnote{originally in the function
field case and $n=2$, but the number field case and general $n$ is similar, see  \cite{FHL}} for each ideal $\mathfrak{a} \in \mathcal{I}(S)$ a Hecke character $\chi_{\mathfrak{a}}$  that
generalizes the power residue symbol $(a/.)$, in the sense that for %a totally positive
$a \equiv 1$ (mod $\mathfrak{c}$) one has $\chi_{(a)} = \chi_a$.
Here we recall that we have chosen once and for all a sufficiently small ideal $\mathfrak{c}$  whose construction depends only on $n$ and the field $K$, and we also recall  the definition of the finite group $R_{\frakc} = H_{\frakc} \otimes \Bbb{Z}/n\Bbb{Z}$ from the previous section.  For an ideal $\A \in \mathcal{I}(S)$ we shall denote by $[\A]$ the image of $\A$ in $R_{\mathfrak{c}}$.
%definition of the ideal $\mathfrak{c}$ and the group $R_{\mathfrak{c}}$ from the previous section;
%\marginpar{the introduction is a bit
%imprecise by just saying that $\mathfrak{c}$ is suff. big. We could be precise here, but I don't think we have to}
For the convenience of the reader we shall give precise definitions in Appendix II, and quote here only the relevant facts which we need for the rest of the paper; the interested reader is also referred to the detailed accounts in \cite{FF, FHL, GL}. %  where proofs of the statements of Lemma can be found.
We point out that we follow the convention in these references to   always mean the underlying \emph{primitive} Hecke character when we write  $\chi_{\mathfrak{a}}$.  In other words, $\chi_{\mathfrak{a}}$ is by definition a primitive Hecke character.
For example, $\chi_{\mathfrak{a}^n}$  is the arithmetic function on integral ideals that returns the value 1 for \emph{all} non-zero integral
ideals $\mathfrak{b}$, whether or not they are coprime to $\mathfrak{a}$.  On the other hand, a product $\chi_{\mathfrak{a}} \chi_{\mathfrak{b}}$ is the product of the two arithmetic functions $\chi_{\A}$, $\chi_{\B}$ defined on ideals and not necessarily\footnote{This differs form the convention in \cite{GL}.} a primitive character (cf.\ also Lemma \ref{conductor} and  the beginning of Section \ref{sect:large-sieve}). The various statements of the following lemma are proved in \cite[Appendix\,B]{GL}.
%We do not need the details of the (non)-canonical construction of the Hecke
% character $ \chi_{\mathfrak{a}}$

\begin{lem}\label{lemma21}  For any ideal $\mathfrak{a} \in \mathcal{I}(S)$ the character $\chi_{\mathfrak{a}}$ is a primitive $n$-th order Hecke character of trivial infinite type. Let $\mathfrak{a}, \mathfrak{b} \in \mathcal{I}(S)$. Then
\begin{displaymath}
\chi_{\mathfrak{a}} \chi_{\mathfrak{b}} = \chi_{\mathfrak{a}\mathfrak{b}} \, \text{ if } \mathfrak{a}\mathfrak{b}  \text{ is $n- $th power free}\qquad \textrm{and} \qquad \chi_{\mathfrak{a}\mathfrak{b}^n} = \chi_{\mathfrak{a}}.
\end{displaymath}
%where, in the first equality, the left hand side should be interpreted as the underlying primitive character.
%if $\mathfrak{a} \mathfrak{b}$ is $n$-th power free. In all cases,
%\begin{displaymath}
 % \chi_{\mathfrak{a}\mathfrak{b}^n} = \chi_{\mathfrak{a}}.
%\end{displaymath}
%for all $\mathfrak{a}, \mathfrak{b} \in \mathcal{I}(S)$. %If $\mathfrak{a}$ is squarefree.
%If $\mathfrak{a} = \prod_{\mathfrak{p}} \mathfrak{p}^{e_\mathfrak{p}}$ is the canonical factorization, then
If $\A = \A_1\A_2^n$ with $\A_1$ $n$-th power free, then the conductor of $\chi_{\A}$ is an ideal lying between $\A_0$ and $\frakc \A_0$, where $\A_0$ is the squarefree kernel of $\A_1$.
%\begin{displaymath}
  %\text{cond}(\chi_{\mathfrak{a}}) = \mathfrak{c}_{\chi}\prod_{n \nmid e_{\mathfrak{p}}  } \mathfrak{p}
%\end{displaymath}
%where $\mathfrak{c}_{\chi}$ is an ideal dividing $\mathfrak{c}$.
Finally a reciprocity formula holds in the sense that the value of  $\chi_{\mathfrak{a}}(\mathfrak{b}) \chi_{\mathfrak{b}}(\mathfrak{a})^{-1}$ depends only on the classes $[\mathfrak{a}], [\mathfrak{b}] \in R_{\mathfrak{c}}$.
\end{lem}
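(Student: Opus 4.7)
My approach is constructive: I would build $\chi_{\A}$ explicitly from the $n$-th power residue symbol and then verify each claim in turn. The ideal $\frakc$ is chosen so that every unit $u \equiv 1 \pmod{\frakc}$ is an $n$-th power (this is precisely the ``sufficiently small'' condition). Consequently, for a principal ideal $(a)$ generated by an $a \equiv 1 \pmod{\frakc}$ coprime to $S$, the rule $\chi_{(a)}(\B) := (a/\B)_n$ depends only on $(a)$ and not on the chosen generator. To extend to general $\A \in \I(S)$, one fixes once and for all a set of representatives for $R_{\frakc} = H_{\frakc} \otimes \Z/n\Z$, defines $\chi_{\A}$ on each representative class via a principal ideal in the same class that is $\equiv 1 \pmod{\frakc}$, and extends to arbitrary ideals by imposing $\chi_{\A \B^n} = \chi_{\A}$.

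Each claim then follows locally. Triviality of the infinite type holds because the residue symbol takes values in $\mu_n \subset K^{\times}$, independent of the archimedean embeddings. Primitivity is checked prime by prime: write $\A = \A_1 \A_2^n$ with $\A_1$ $n$-th power free and let $\A_0$ be the squarefree kernel of $\A_1$; at a prime $\pr \mid \A_0$ with $\pr \nmid n\frakc$, the non-triviality of $(\cdot/\pr)_n$ on $(\oh/\pr)^{\times}/((\oh/\pr)^{\times})^n \cong \mu_n$ forces the local conductor to equal $\pr$ exactly, while at the finitely many primes dividing $n\frakc$ an analysis via higher unit groups shows the contribution is bounded by $\frakc$ itself. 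Together this yields $\A_0 \mid \f(\chi_{\A}) \mid \frakc \A_0$.

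The identities $\chi_{\A}\chi_{\B} = \chi_{\A\B}$ (when $\A\B$ is $n$-th power free) and $\chi_{\A \B^n} = \chi_{\A}$ are immediate from the multiplicativity of the power residue symbol in the upper argument, combined with the compatibility of the extension with the group law on $R_{\frakc}$. Reciprocity rests on the classical $n$-th power reciprocity law in $K$: for $a, b \in \oh$ coprime to each other and to $S$ with $a \equiv b \equiv 1 \pmod{\frakc}$, one has $(a/(b))_n = (b/(a))_n$, hence $\chi_{(a)}((b)) = \chi_{(b)}((a))$. For general $\A, \B \in \I(S)$, writing each as a fixed class representative times a principal ideal $\equiv 1 \pmod{\frakc}$ reduces $\chi_{\A}(\B)\chi_{\B}(\A)^{-1}$, through the preceding relations, to an expression determined entirely by $[\A], [\B] \in R_{\frakc}$.

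The main obstacle is the bookkeeping at primes dividing $n\frakc$: the ideal $\frakc$ must be chosen large enough that the unit quotient behaves compatibly with the local symbols at ramified primes, and a careful choice of set-theoretic section $R_{\frakc} \to \I(S)$ is needed so that multiplicativity and reciprocity hold on the nose rather than merely up to extraneous $n$-th roots of unity. Precisely this bookkeeping is carried out in \cite{FF, FHL, GL}, which is why the statements above are recorded here as a quotation rather than re-derived from scratch.
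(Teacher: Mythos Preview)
The paper does not prove this lemma at all: it states ``The various statements of the following lemma are proved in \cite[Appendix\,B]{GL}'' and relegates the explicit construction of $\chi_{\A}$ to Appendix~II. Your proposal correctly recognizes this in its final paragraph, so in that sense you and the paper agree --- both defer the details to \cite{FF, FHL, GL}.

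Your sketch of the construction is in the right spirit but differs in a couple of places from the construction actually recorded in Appendix~II. First, the condition defining $\frakc$ is not that units $\equiv 1 \pmod{\frakc}$ are $n$-th powers globally, but rather the local condition that $\mathrm{ord}_{\pr}(a-1) \geq e_{\pr}$ forces $a$ to be an $n$-th power in $K_{\pr}$ for each $\pr \in S$. Second, the extension to general $\A$ is not done by choosing a principal ideal $\equiv 1 \pmod{\frakc}$ in each class of $R_{\frakc}$; rather, one uses that $\oh_S$ has class number $1$ to pick, for each representative $E \in \mathcal{E}$, an element $m_E \in K^{\times}$ with $(m_E) = E$ as $\oh_S$-ideals (no congruence condition on $m_E$), then writes an arbitrary $\A$ as $(x) E \mathfrak{g}^n$ with $x \equiv 1 \pmod{\frakc}$ and sets $\chi_{\A}(\B) = (x m_E / \B)$. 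This distinction matters for the reciprocity computation, since the $m_E$-contributions are precisely what produces the nontrivial dependence on the classes $[\A], [\B]$. None of this invalidates your outline, but if you were to flesh it out you would want to follow the Appendix~II construction rather than your variant.
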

%For the rest of the paper we will only use the properties of $\chi_{\mathfrak{a}}$ stated in this lemma.

%Now let $\B\in \mathcal{I}(S)$ be squarefree.

From the Chinese Remainder Theorem we conclude that any finite order Hecke character $\chi$ modulo $\mathfrak{f} = \mathfrak{f}_1\mathfrak{f}_2$ with $\mathfrak{f}_1, \mathfrak{f}_2$ coprime decomposes as $\chi = \chi_1\chi_2$ where $\chi_j$ is a Hecke character modulo $\mathfrak{f}_j$. This decomposition is unique up to multiplication by a character of the class group $\text{Cl}$ of $K$, and $\chi$ is primitive if and only if $\chi_1$ and $\chi_2$ are primitive. To see this,  denote by $G(\mathfrak{f})$ the set of Hecke characters of conductor dividing $\mathfrak{f}$ which are unramified at infinity. Let $\mathcal{O}^{\times}_{\mathfrak{f}}$ denote the image of the units $\mathcal{O}^{\times}$ in $(\mathcal{O}/\mathfrak{f})^{\times}$. Then
\begin{displaymath}
  G(\mathfrak{f})\cong \{(\chi, \chi_{\text{Cl}}) \in \widehat{(\mathcal{O}/\mathfrak{f})^{\times}} \times \widehat{\text{Cl}} \mid \chi(\mathcal{O}^{\times}_{\mathfrak{f}}) = 1\},
\end{displaymath}
and by a simple counting argument it is easy to see that the map $G(\mathfrak{f}_1) \times G(\mathfrak{f}_2) \rightarrow G(\mathfrak{f})$ is $h$-to-1, in particular surjective.\\

We need this observation in the following situation: if $\B \in \mathcal{I}(S)$ is squarefree, the character $\chi_{\B}$ factorizes as
\begin{equation}\label{decompos}
  \chi_{\B} = \chi^{(\B)} \psi_{(\B)},
\end{equation}
say, where $\chi^{(\B)}$ is a primitive Hecke character of conductor $\B$ and $\psi_{(\B)}$ is a Hecke character of conductor dividing $\frakc$. We choose once and for all such a decomposition but fixing a choice of class group characters. \\

%Once we have fixed a number field $K$ (along with $S$, $\mathfrak{c}$ and a group $R_{\mathfrak{c}}$), we call a family of characters $\{\chi_{\A} \mid \A \in \mathcal{I}(S)\}$ satisfying the properties of Lemma \ref{lemma21} for some integer $n \in \Bbb{N}$ an \emph{$n$-th order Hecke family}. We make the simple, but important observation that for any non-trivial divisor $d$ of $n$ the set  $\{\chi_{\A}^{n/d} \mid \A \in \mathcal{I}(S)\}$ is a $d$-th order Hecke family.\\

% Let $d \mid n$ be a non-trivial divisor. We make the observation that all statements in this lemma remain true if we replace $\chi_{\mathfrak{a}}$ by $\chi_{\A}^{n/d}$ and $n$ by $d$.

The $L$-function attached to any primitive ray class character $\chi$ of conductor $\mathfrak{f}$ satisfies the functional equation
\begin{equation*}
\left(\frac{|d_K|}{(2\pi)^d}\right)^{s/2} \Gamma(s)^{d/2} L(s,\chi) = \epsilon(\chi) (\n \f)^{1/2-s}  \left(\frac{|d_K|}{(2\pi)^d}\right)^{(1-s)/2} \Gamma(1-s)^{d/2}L(1-s,\bar{\chi})
\end{equation*}
where $d_K$ denotes the discriminant of $K$, and $\epsilon(\chi)$ is the epsilon-factor. % and $\epsilon(\chi)$ is the root number (of absolute value 1).
For a fixed smooth function $W$ with compact support in $(0, \infty)$, let $\widehat{W}(s)$ denote its Mellin transform and let
\begin{displaymath}
  \dot{W}(x) = \frac{1}{2\pi i} \int_{(c)} \widehat{W}(1-s) \left(\frac{(2\pi)^s\Gamma(s)}{\Gamma(1-s)}\right)^{d/2} |d_K|^{-s/2} x^{-s} ds,
\end{displaymath}
for $c > 0$. A contour shift shows 
\begin{equation}\label{eq:B3-B2:2}
\dot{W}(x) \ll  |x|^{-A},
\end{equation}
for any $A > 0$. A standard argument now yields  (cf.\ \cite[Lemma A.2]{GL})
\begin{lem}\label{poisson} Let $M > 0$. With the notation as above, we have
\begin{displaymath}
\sum_{\A } W\left(\frac{\n\A}{M}\right) \chi(\A) = \frac{M \epsilon(\chi)}{\sqrt{\n \f}} \sum_{\A} \dot{W}\left(\frac{M \n\A}{\n \f}\right) \overline{\chi}(\A).
\end{displaymath}
\end{lem}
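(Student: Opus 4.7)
The plan is the standard Mellin inversion plus functional equation argument. Since $W$ is smooth and compactly supported in $(0,\infty)$, its Mellin transform $\widehat{W}$ is entire with faster-than-polynomial decay on every vertical line. Applying Mellin inversion and interchanging the $\A$-sum with the contour integral --- legitimate by this rapid decay --- I would first obtain, for $c > 1$,
$$\sum_\A W(\n\A/M)\chi(\A) = \frac{1}{2\pi i}\int_{(c)} \widehat{W}(s)\, M^s\, L(s,\chi)\, ds.$$

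The next step is to invoke the functional equation quoted just above to replace $L(s,\chi)$ by $\epsilon(\chi)(\n\f)^{1/2-s}(|d_K|/(2\pi)^d)^{(1-2s)/2}\Gamma(1-s)^{d/2}/\Gamma(s)^{d/2}\,L(1-s,\bar\chi)$. Assuming $\chi$ is nontrivial --- which is implicit in the statement since the right-hand side carries no main term --- $L(s,\chi)$ is entire, so Stirling together with the decay of $\widehat{W}$ allows me to shift the contour to some $\Re s = c' < 0$ without crossing any poles. On this new line $L(1-s,\bar\chi)$ unfolds into an absolutely convergent Dirichlet series $\sum_\B \bar\chi(\B)\,\n\B^{s-1}$, which I would pull outside.

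After the change of variables $s \mapsto 1-s$, the $\B$-dependent part of the integrand becomes a power of $M\n\B/\n\f$ and the $\B$-independent prefactors collect to $M\epsilon(\chi)/\sqrt{\n\f}$; matching what remains of the contour integral against the definition of $\dot{W}$ yields precisely $\dot{W}(M\n\B/\n\f)$. The main obstacle here is purely bookkeeping --- carefully tracking every power of $(2\pi)^d$, $|d_K|$, $\n\f$ and $M$ through the functional equation so that the final prefactor and argument of $\dot{W}$ land exactly as stated; the analytic justifications (interchange of sum and integral, contour shift) are entirely routine given the rapid decay of $\widehat{W}$ and Stirling's formula.
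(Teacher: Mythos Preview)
Your proposal is correct and is precisely the standard argument the paper has in mind; in fact the paper does not write out a proof at all but simply says ``A standard argument now yields (cf.\ \cite[Lemma A.2]{GL})'', and the Mellin inversion plus functional equation plus contour shift you describe is exactly that standard argument. Your remark that the only genuine work is bookkeeping the constants is also accurate.
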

We will apply this later to a character of the type $\chi_{\B_1}^j\bar{\chi}^j_{\B_2}$ for $j \in \Bbb{N}$. Therefore we state the following auxiliary result:
\begin{lem}\label{conductor}
%Let $\A\in \mathcal{I}(S)$ be squarefree. The character $\chi_{\A}$ factorizes uniquely into $\chi^{(\A)} \psi$ where $\psi \in \widehat{R_{\mathfrak{c}}}$  is a character of $R_{\mathfrak{c}}$ depending only on $[\A]$, and  $\chi^{(\A)}$ is a character of conductor $\A$.
Let $\B_1, \B_2 \in \mathcal{I}(S)$ be coprime and squarefree, and assume that $[\B_1] = [\B_2] = \mathcal{C} \in R_{\mathfrak{c}}$. Let $1 \leq j < n$. Let $\chi$ be the primitive Hecke character that induces  $\chi^j_{\B_1}\bar{\chi}^j_{\B_2}$. Then $\chi = (\chi^{(\B_1)})^j (\bar{\chi}^{(\B_2)})^j$ with the notation as in \eqref{decompos}. In particular, $\chi$ has conductor $\B_1\B_2$ and root number
\begin{equation}\label{gauss}
  \epsilon(\chi) = \eta \epsilon( (\chi^{(\B_1)})^j) \epsilon((\bar{\chi}^{(\B_2)})^j) \bar{\psi}_{(\B_2)}(\B_2)^j \psi_{(\B_1)}(\B_1)^j
\end{equation}
where $\eta $ is a constant depending only on $\mathcal{C}$.
\end{lem}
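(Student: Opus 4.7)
The plan is to apply the decomposition \eqref{decompos} twice, use the hypothesis $[\B_1]=[\B_2]=\mathcal{C}$ to kill the $\frakc$-part, verify primitivity by a local conductor analysis, and then compute the epsilon factor via the standard multiplicativity formula for primitive characters of coprime conductors together with the reciprocity in Lemma \ref{lemma21}.

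Starting from $\chi_{\B_i}=\chi^{(\B_i)}\psi_{(\B_i)}$, I would write
\[
\chi^j_{\B_1}\bar{\chi}^j_{\B_2}
=(\chi^{(\B_1)})^j(\bar{\chi}^{(\B_2)})^j\cdot \psi_{(\B_1)}^j\bar{\psi}_{(\B_2)}^j.
\]
Each $\psi_{(\B)}$ has conductor dividing $\frakc$ and order dividing $n$, hence factors through $R_{\frakc}$; the convention underlying the fixed choice in \eqref{decompos} makes $\psi_{(\B)}$ depend only on $[\B]\in R_{\frakc}$, so the hypothesis gives $\psi_{(\B_1)}=\psi_{(\B_2)}$ and the trailing factor is trivial. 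This identifies the function $\chi^j_{\B_1}\bar{\chi}^j_{\B_2}$ with $(\chi^{(\B_1)})^j(\bar{\chi}^{(\B_2)})^j$, and it remains to check primitivity. Locally at any $\pr\mid\B_1$ (coprime to $\B_2\frakc$), $\chi^{(\B_1)}$ has local order exactly $n$, so $(\chi^{(\B_1)})^j$ has local order $n/\gcd(n,j)\geq 2$ since $1\leq j<n$, giving local conductor exactly $\pr$; the symmetric argument handles $\B_2$. Since the two factors have squarefree coprime conductors $\B_1$ and $\B_2$, the product is primitive of conductor $\B_1\B_2$, as claimed.

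For the epsilon factor I would invoke the standard multiplicativity formula
$\epsilon(\chi_1\chi_2)=\epsilon(\chi_1)\epsilon(\chi_2)\chi_1(\f_2)\chi_2(\f_1)$ (up to a normalization depending only on $K$) for primitive characters $\chi_1,\chi_2$ of coprime conductors $\f_1,\f_2$. Applied to $\chi=(\chi^{(\B_1)})^j(\bar{\chi}^{(\B_2)})^j$ this produces the cross factor $(\chi^{(\B_1)})^j(\B_2)(\bar{\chi}^{(\B_2)})^j(\B_1)$. Substituting $\chi^{(\B_i)}=\chi_{\B_i}\bar{\psi}_{(\B_i)}$ splits this into two pieces: first $\chi_{\B_1}^j(\B_2)\bar{\chi}_{\B_2}^j(\B_1)$, which by Lemma \ref{lemma21} depends only on $([\B_1],[\B_2])=(\mathcal{C},\mathcal{C})$ and can therefore be folded into $\eta$; and second $\bar{\psi}_{(\B_1)}^j(\B_2)\psi_{(\B_2)}^j(\B_1)$, which by $\psi_{(\B_1)}=\psi_{(\B_2)}$ equals $\bar{\psi}_{(\B_2)}^j(\B_2)\psi_{(\B_1)}^j(\B_1)$, i.e.\ precisely the factor in \eqref{gauss}.

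The main subtlety is the reliance on the convention that $\psi_{(\B)}$ depends only on the class $[\B]\in R_{\frakc}$. If the chosen decomposition in \eqref{decompos} does not literally have this property, one would have to track the class-group ambiguity; but because the ambiguity only involves characters of $\text{Cl}\otimes\Z/n\Z$, whose values depend solely on ideal classes, any resulting discrepancy is again a function of $\mathcal{C}$ and can be absorbed into $\eta$.
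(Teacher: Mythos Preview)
Your epsilon-factor computation matches the paper's, but the first step has a genuine gap. The paper's setup does \emph{not} assert that $\psi_{(\B)}$ depends only on $[\B]\in R_{\frakc}$; the sentence after \eqref{decompos} only says that one decomposition is fixed once and for all. Your caveat conflates two different things: the class-group ambiguity in the \emph{choice} of decomposition (which is fixed) and the dependence of the resulting $\psi_{(\B)}$ on $\B$ (which remains). Without an argument, nothing prevents $\psi_{(\B_1)}^j\bar\psi_{(\B_2)}^j$ from being a nontrivial character of conductor dividing $\frakc$; in that case your local analysis only shows that $(\chi^{(\B_1)})^j(\bar\chi^{(\B_2)})^j$ is primitive of conductor $\B_1\B_2$, while $\chi$ itself would have strictly larger conductor $\B_1\B_2\cdot\text{cond}(\psi_{(\B_1)}^j\bar\psi_{(\B_2)}^j)$, contradicting the statement of the lemma. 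This extra factor cannot be ``absorbed into $\eta$'': it alters the identification of $\chi$ and its conductor, not merely a scalar in the root-number formula.

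The paper runs the logic in the reverse direction. It first \emph{cites} \cite[Appendix~B]{GL} for the fact that $\chi$ has conductor $\B_1\B_2$ (this is precisely where the hypothesis $[\B_1]=[\B_2]$ enters). Since the conductor is coprime to $\frakc$, comparing with the decomposition forces $\psi_{(\B_1)}^j\bar\psi_{(\B_2)}^j$ to be trivial on ideals coprime to $S$ and simultaneously yields \eqref{factorize}. The transformation $\bar\psi_{(\B_1)}(\B_2)^j\psi_{(\B_2)}(\B_1)^j=\bar\psi_{(\B_2)}(\B_2)^j\psi_{(\B_1)}(\B_1)^j$ in the epsilon computation is then a consequence of this triviality, not of an assumed equality $\psi_{(\B_1)}=\psi_{(\B_2)}$. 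Your local-order argument for primitivity of $(\chi^{(\B_1)})^j(\bar\chi^{(\B_2)})^j$ is a nice alternative to part of the \cite{GL} input, but it does not by itself control the $\frakc$-part of $\chi$.
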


\begin{proof}
The fact that $\chi$ has conductor $\B_1\B_2$ is proved in \cite[Appendix\,B]{GL}.
%First we show that $\chi$ is a character modulo $\B_1\B_2$. To this end let $x \equiv 1$ (mod $\B_1\B_2$), $(x, S) = 1$.   We use the definition of the character $\chi_{\B}$ as given in Appendix II. Since $[\B_1] = [\B_2]$, we can decompose $\B_1 = (x_1) E \mathfrak{g}_1^n$, $\B_2 = (x_2) E \mathfrak{g}^n$ with the notation as in Appendix II. Then we have
%\begin{displaymath}
%\chi^j_{\B_1}\bar{\chi}^j_{\B_2}(x)  = \chi_{x_1/x_2}^j(x) = \left(\frac{x_1/x_2}{x}\right)^j = \left(\frac{x_1}{x}\right)^j \left(\frac{x_2}{x}\right)^{-j} = \left(\frac{x}{x_1}\right)^j \left(\frac{x}{x_2}\right)^{-j}
%\end{displaymath}
%by reciprocity. But the latter is $\left(\frac{x}{\B_1}\right)^j \left(\frac{x}{\B_2}\right)^{-j} = 1$ since $x \equiv 1$ (mod $\B_1$) and $x \equiv 1$ (mod $\B_2$).
%On the other hand it is clear that $\chi^j_{\B_1}\bar{\chi}^j_{\B_2}$ cannot  be induced by a character whose conductor properly contains $\B_1\B_2$, so $\chi$ has conductor $\B_1\B_2$.
It follows that
\begin{equation}\label{factorize}
  \chi = (\chi^{(\B_1)})^j  (\bar{\chi}^{(\B_2)})^j,
\end{equation}
and by the Chinese Remainder Theorem and \eqref{decompos} we obtain
\begin{displaymath}
\begin{split}
  \epsilon(\chi)& =  \epsilon( (\chi^{(\B_1)})^j) \epsilon((\bar{\chi}^{(\B_2)})^j) \chi^{(\B_1)}(\B_2)^j \bar{\chi}^{(\B_2)}(\B_1)^j \\
  &= \epsilon( (\chi^{(\B_1)})^j) \epsilon((\bar{\chi}^{(\B_2)})^j) \chi_{\B_1}(\B_2)^j \bar{\chi}_{\B_2}(\B_1)^j \bar{\psi}_{(\B_1)}(\B_2)^j \psi_{(\B_2)}(\B_1)^j.
\end{split}
\end{displaymath}
Now \eqref{factorize} implies that $\psi_{(\B_1)}^j\bar{\psi}_{(\B_2)}^j$ is trivial on ideals coprime to $S$. Hence $\bar{\psi}_{(\B_1)}(\B_2)^j \psi_{(\B_2)}(\B_1)^j = \bar{\psi}_{(\B_2)}(\B_2)^j \psi_{(\B_1)}(\B_1)^j $, and by reciprocity $\eta :=  \chi_{\B_1}(\B_2)^j \bar{\chi}_{\B_2}(\B_1)^j $ depends only on $\mathcal{C}$. This completes the proof of \eqref{gauss}.
% Conclude from $(\B_1\B_2, S) = 1$ and $\B_1,\B_2$ squarefree and coprime that the conductor must be $\B_1\B_2$.
\end{proof}

%We observe two immediate corollaries: let $\B_1, \B_2 \in \mathcal{I}(S)$ be coprime and squarefree, and assume $[\B_1] = [\B_2]$. Let $1 \leq j <n$. Then the conductor of $\chi_{\B_1}^j\bar{\chi}_{\B_2}^j$ is exactly $\B_1\B_2$, and the normalized Gau{\ss} sum factors into

\section{A large sieve inequality for $n$-th order characters}
\label{sect:large-sieve}

In this section we prove Theorem \ref{thm:thm3}. The proof is inspired by the work of Heath-Brown \cite[Theorem 2]{HB1}. Some extra complications arise from the fact that the character $\chi_{\A}$ and its reciprocity law are not as explicit as in the cases $n=2, 3$. %In Section\,\ref{subsect:companion}, we define three norms $B_1$, $B_2$ and $B_3$ and give the relations between them; this allows us, in Section\,\ref{subsect:rec}, to prove Theorem \ref{thm:thm3}.
For simplicity of notation we shall write %Let us now introduce some more notations, used in this section only. For $\A\in \I(S)$ and $M\in \R_{>0}$, by
$\A\sim M$ to mean $M < \n\A \le 2M$. For $\A,\B\in \I(S)$ square-free, coprime and in the same class $\mathcal{C}$, we denote by $\chi_{\A,\B} = \chi^{(\A)} \overline{\chi}^{(\B)}$ the primitive character inducing $\chi_\A\overline{\chi}_\B$ as in Lemma \ref{conductor}. % and its conductor by $\f_{\A,\B}$. By Lemma \ref{lemma21} we have  $\f_{\A,\B}=\frakc_{\A,\B}\A\B$, for some ideal $\frakc_{\A,\B}$ dividing $\frakc$.

\subsection{Preliminaries}
\label{subsect:companion}

As in the previous section let $W$ be a fixed smooth function which is non-zero on $[1, 2]$ and has compact support in $(0, \infty)$. For any sequence of complex numbers $\lambda=(\lambda_\B)$, real numbers $M, N > 0$ and an integer $1\le j\le n-1$, we introduce  the quantities
\begin{equation*}
\Sigma_1^j(M,N,\lambda) = \sumstar{ \A\sim M} \Big\vert \sumstar{ \B\sim N\ } \lambda_\B \chi_\A^j(\B) \Big\vert^2, \quad \quad
%\end{equation*}
%
%\begin{equation*}
\Sigma_2^j(M,N,\lambda) = \sum_{\substack{ \A\sim M \\ \A \in \mathcal{I}(S)}} \Big\vert \sumstar{ \B\sim N} \lambda_\B \chi_\B^j(\A) \Big\vert^2.
\end{equation*}
For a class $\mathcal{C} \in R_{\mathfrak{c}}$ let
\begin{equation}\label{sigma3}
\Sigma_3^j(M,N,\lambda, \mathcal{C}) = 
\Bigl|
	\sum_{\substack{
		\B_1,\B_2\sim N\\ 
		\B_1,\B_2\in\I(S)\\
		(\B_1,\B_2)=(1)\\ 
		[\B_1] =  [\B_2] = \mathcal{C}
	}} 
	\lambda_{\B_1} \overline{\lambda_{\B_2}} 	
	\sum_{\A\subseteq \mathcal{O}} 
		W\left(\frac{\n \A}{M}\right) \chi_{\B_1,\B_2}^j(\A)
\Bigr|.
\end{equation}
We write %obtain three norms $B_1^j$, $B_2^j$ and $B_3^j$ by defining
\begin{equation*}
B_i^j(M,N) = \sup_{\lambda \not = 0} \Bigl\{\Sigma_i^j(M,N,\lambda) / \sumstar{\B\sim N } |\lambda_\B |^2\Bigr\}, \quad i = 1, 2,
\end{equation*}
and
\begin{equation*}
B_3^j(M,N) = \sup_{\lambda \not = 0} \max_{\mathcal{C} \in R_{\mathfrak{c}}} \Bigl\{\Sigma_3^j(M,N,\lambda, \mathcal{C}) / \sumstar{ \B\sim N, [\B] = \mathcal{C} } |\lambda_\B |^2\Bigr\}.
\end{equation*}
Our  objective in this subsection is to state properties of these norms and  some explicit relations between them.

\begin{lem}\label{basiclemma}
We have
\begin{equation}\label{duality}
  B_1^j(M, N) \ll B_1^j(N, M).
\end{equation}
Moreover, there exists a constant $C \geq 1$ such that whenever  $M_2 \ge C M_1 \log(2M_1N)$,
%Moreover, there exists a positive constant $C$ as follows. Let $M_1,M_2,N \ge 1$. Assume that $M_2 \ge C M_1 \log(2M_1N)$. Then
\begin{equation}\label{mono}
B_1^j(M_1,N) \ll B_1^j(M_2,N) .
\end{equation}
%, for a suitable absolute constant $C\ge 1$.
\end{lem}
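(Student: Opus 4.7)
The two assertions of Lemma \ref{basiclemma} are independent, and I would treat them separately.

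\textbf{Duality.} The bound $B_1^j(M,N) \ll B_1^j(N,M)$ is the standard large-sieve duality. Writing $\Sigma_1^j(M,N,\lambda) = \|T\lambda\|_2^2$ for the linear map $T\colon (\lambda_\B)_\B \mapsto \bigl(\sumstar{\B \sim N}\lambda_\B\chi_\A^j(\B)\bigr)_\A$, we have $B_1^j(M,N) = \|T\|^2 = \|T^{\ast}\|^2$; the only subtlety is that the adjoint kernel is $\overline{\chi_\A^j(\B)}$ and not $\chi_\B^j(\A)$. This is remedied by the reciprocity formula in Lemma \ref{lemma21}, which gives $\chi_\A^j(\B) = \omega([\A],[\B])^j \chi_\B^j(\A)$ for a phase $\omega$ of absolute value $1$ depending only on the classes $[\A], [\B] \in R_{\frakc}$. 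I would split the sums over $\A$ and $\B$ according to their classes in $R_{\frakc}$, apply the adjoint identity within each pair of classes, and recombine via Cauchy--Schwarz over the finite group $R_{\frakc}$, losing only a constant depending on $K$ and $n$; complex conjugation $\chi_\A^j = \overline{\chi_\A^{n-j}}$ then identifies the resulting bound with $B_1^j(N,M)$ itself.

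\textbf{Monotonicity.} The idea is to pad each squarefree $\A_1 \sim M_1$ by a prime ideal $\mathfrak{p}$ of appropriate size so that $\A := \A_1\mathfrak{p}$ is squarefree with $\n\A \asymp M_2$, and then exploit multiplicativity: $\chi_{\A_1\mathfrak{p}}^j(\B) = \chi_{\A_1}^j(\B)\chi_\mathfrak{p}^j(\B)$ whenever $(\mathfrak{p},\A_1\B) = 1$. Setting $Y := M_2/M_1$, the prime ideal theorem in $K$ produces $\gg Y/\log Y$ admissible primes $\mathfrak{p} \notin S$ with $\n\mathfrak{p}$ in a narrow range near $Y$ and $(\mathfrak{p},\A_1) = 1$, provided $Y$ exceeds a suitable constant multiple of $\log(2M_1)$ (so that the $O(\log M_1/\log Y)$ primes of this size dividing $\A_1$ do not dominate the count). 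The identity
\begin{equation*}
\sumstar{\B \sim N} \lambda_\B \chi_{\A_1}^j(\B) \;=\; \sumstar{\B \sim N}\lambda_\B\,\overline{\chi_\mathfrak{p}^j(\B)}\,\chi_{\A_1\mathfrak{p}}^j(\B) \;+\; \sumstar{\substack{\B \sim N\\ \mathfrak{p}\mid\B}} \lambda_\B\,\chi_{\A_1}^j(\B),
\end{equation*}
squared via $(a+b)^2 \le 2a^2 + 2b^2$, averaged over admissible $\mathfrak{p}$, and summed over $\A_1$ (relabelling $\A = \A_1\mathfrak{p}$ in the main term, which sweeps out squarefree ideals of norm $\asymp M_2$), reduces $\Sigma_1^j(M_1,N,\lambda)$ to $B_1^j(M_2,N)\|\lambda\|_2^2$ plus an error coming from the second sum above.

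\textbf{Main obstacle.} The crux is controlling the exceptional error $\mathrm{Err}(\A_1,\mathfrak{p}) := \sumstar{\substack{\B \sim N\\ \mathfrak{p}\mid\B}}\lambda_\B\chi_{\A_1}^j(\B)$. Writing $\B = \mathfrak{p}\B'$ with $\B' \sim N/\n\mathfrak{p}$ squarefree coprime to $\mathfrak{p}$, this equals $\chi_{\A_1}^j(\mathfrak{p})\sumstar{\B'}\lambda_{\mathfrak{p}\B'}\chi_{\A_1}^j(\B')$, itself a $\Sigma_1^j$-type expression in the shifted coefficients $(\lambda_{\mathfrak{p}\B'})_{\B'}$ at reduced length $N/\n\mathfrak{p}$. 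Bounding it by a trivial or bootstrapped large-sieve estimate, summing over the admissible $\mathfrak{p}$, and using that each squarefree $\B \sim N$ has at most $O\bigl(\log(2N)/\log(Y+1)\bigr)$ prime divisors of norm $\asymp Y$, the total error after division by $|\mathcal{P}| \gg Y/\log Y$ is absorbed into $B_1^j(M_2,N)\|\lambda\|_2^2 \gg M_2\|\lambda\|_2^2$. Balancing the PNT requirement $Y \gg \log(2M_1)$ against the divisor-multiplicity factor $\log(2N)$, and matching everything against the trivial lower bound $B_1^j(M_2,N) \gg M_2$, is what dictates the logarithmic term in the hypothesis $M_2 \ge CM_1\log(2M_1N)$. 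I expect this bookkeeping to be the most technical point of the proof; the remaining ingredients (PNT in $K$, multiplicativity of $\chi_\A$, and large-sieve duality) are all standard.
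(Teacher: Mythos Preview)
Your approach is the standard Heath--Brown argument that \cite{HB1} and \cite{GL} carry out, and the paper simply cites those references. Your treatment of duality is correct: the adjoint kernel is $\overline{\chi_\A^j(\B)}$, and after absorbing the conjugate into the coefficients one is left comparing $\chi_\A^j(\B)$ with $\chi_\B^j(\A)$, which indeed requires the reciprocity law and a split over classes in $R_\frakc$. (The complex-conjugation remark at the end is harmless but not needed once you replace $\mu$ by $\bar\mu$.)

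For monotonicity your padding-by-primes strategy is right, but the error absorption as you describe it does not close. You propose to bound the averaged error by something $\ll M_2\|\lambda\|^2$ and absorb it into $B_1^j(M_2,N)\|\lambda\|^2$ on the right. If you bound $\sum_{\A_1}|\mathrm{Err}(\A_1,\mathfrak{p})|^2$ by a large-sieve norm, the natural choice is to keep the coefficients $\lambda_\B^{(\mathfrak{p})}:=\lambda_\B\mathbf{1}_{\mathfrak{p}\mid\B}$ supported on $\B\sim N$, which gives the bound $B_1^j(M_1,N)\sum_{\mathfrak{p}\mid\B}|\lambda_\B|^2$; after averaging over $\mathfrak{p}$ and using that each $\B$ has $O(\log(2N)/\log Y)$ prime divisors of norm $\asymp Y$, the error is $\ll B_1^j(M_1,N)\cdot\frac{\log(2N)}{Y}\|\lambda\|^2$. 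There is no a~priori reason this is $\ll M_2\|\lambda\|^2$ (you would need $B_1^j(M_1,N)\ll M_1Y^2/\log N$, which is not known). The fix is to absorb into the \emph{left} side instead: once $Y\ge C\log(2N)$ the coefficient $\log(2N)/Y$ is at most $1/2$, so subtracting yields $B_1^j(M_1,N)\ll B_1^j(M_2,N)$. Combining this with the PNT requirement $Y\gg\log(2M_1)$ (to guarantee enough admissible primes coprime to $\A_1$) gives precisely the hypothesis $M_2\ge CM_1\log(2M_1N)$. With this one adjustment your argument goes through and matches Heath--Brown's.
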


This is \cite[Lemmas 2.4 and 2.5]{GL} which in turn is a slight generalization of \cite[Lemmas 4 and 5]{HB1}.  From the reciprocity formula in Lemma \ref{lemma21} one concludes easily
\begin{equation}\label{B12}
    B_1^j(M,N) \ll B_2^j(M,N)
\end{equation}
after splitting the $\B$-sum into classes modulo $R_{\mathfrak{c}}$. Next we bound $B_2$ in terms of $B_3$:

\begin{lem}\label{lem:B2-B3}
%Let $M,N \ge 1$. Let $\varepsilon>0$. Let $1\le j\le n$.
There exist $1\le \Delta_1\ll \Delta_2$ such that
\begin{equation*}
B_2^j(M,N) \ll N^\varepsilon B_3^j\left(\frac{M}{\Delta_1},\frac{N}{\Delta_2}\right).
\end{equation*}
\end{lem}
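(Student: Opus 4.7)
My approach is to reduce the smoothed $\Sigma_2^j$-sum to a finite superposition of $\Sigma_3^j$-sums via GCD extraction and M\"obius inversion, with the two scales $\Delta_1,\Delta_2$ arising naturally from the resulting divisor structure.

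First, since $|R_\frakc|$ is bounded in terms of $K$ and $n$, I split $\lambda$ by classes and may assume without loss of generality that $\lambda$ is supported on a single class $\mathcal{C}_0\in R_\frakc$. Expanding
\[
\Sigma_2^j(M,N,\lambda)=\sum_{\substack{\A\sim M\\ \A\in\I(S)}}\Bigl|\sumstar{\B\sim N}\lambda_{\B}\chi_\B^j(\A)\Bigr|^2,
\]
I majorize the sharp indicator $\mathbf{1}_{\A\sim M}$ by a fixed non-negative smooth function $u(\n\A/M)$, and dispose of $\A\in\I(S)$ as well as $(\A,\frakc)=1$ by standard inclusion-exclusion over the finitely many bad primes in $S\cup\frakc$. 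Opening the square yields a double sum over $\B_1,\B_2\sim N$ of $\lambda_{\B_1}\overline{\lambda_{\B_2}}$ against $\sum_\A u(\n\A/M)\chi_{\B_1}^j(\A)\overline{\chi}_{\B_2}^j(\A)$ plus harmless $S$- and $\frakc$-twists.

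Next I extract $\mathcal{D}:=(\B_1,\B_2)$ and write $\B_i=\mathcal{D}\B_i'$, so that $\mathcal{D},\B_1',\B_2'$ are squarefree and pairwise coprime. Since $\mathcal{D}\B_i'$ is squarefree, hence $n$-th power free, Lemma \ref{lemma21} gives $\chi_{\B_i}=\chi_\mathcal{D}\chi_{\B_i'}$ as arithmetic functions, so the character product collapses to $|\chi_\mathcal{D}|^{2j}\chi_{\B_1'}^j\overline{\chi}_{\B_2'}^j$, which equals $\mathbf{1}_{(\A,\mathcal{D})=1}\chi_{\B_1'}^j(\A)\overline{\chi}_{\B_2'}^j(\A)$ once $\A$ is restricted to be coprime to $\frakc$. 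I then clear the coprimality $(\A,\mathcal{D})=1$ by writing $\mathbf{1}_{(\A,\mathcal{D})=1}=\sum_{\mathcal{E}\mid\mathcal{D}}\mu(\mathcal{E})\mathbf{1}_{\mathcal{E}\mid\A}$ and substituting $\A=\mathcal{E}\A'$; this contracts the $\A'$-range to $\sim M/\n\mathcal{E}$. Because $\lambda$ lives in the single class $\mathcal{C}_0$, the quotients satisfy $[\B_1']=[\B_2']=\mathcal{C}_0[\mathcal{D}]^{-1}=:\mathcal{C}$, so Lemma \ref{conductor} identifies the imprimitive character $\chi_{\B_1'}^j\overline{\chi}_{\B_2'}^j$ with the primitive $\chi_{\B_1',\B_2'}^j$ of conductor $\B_1'\B_2'$, up to a uniform twist by a character of $R_\frakc$ that can be absorbed into the reindexed weights $\tilde{\lambda}_{\B'}:=\lambda_{\mathcal{D}\B'}$.

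Finally, I decompose dyadically with $\n\mathcal{D}\asymp\Delta_2$ and $\n\mathcal{E}\asymp\Delta_1\leq\Delta_2$. Each dyadic piece is now precisely of the shape $\Sigma_3^j(M/\Delta_1,N/\Delta_2,\tilde{\lambda},\mathcal{C})$, so applying the definition of $B_3^j$ bounds it by $B_3^j(M/\Delta_1,N/\Delta_2)\,\sumstar{\B'\sim N/\Delta_2,\,[\B']=\mathcal{C}}|\tilde{\lambda}_{\B'}|^2$. Summing over $\mathcal{D}$ and $\mathcal{E}$, the divisor bounds $\sum_{\mathcal{D}\mid\B}1\ll\n\B^\varepsilon$ and $\sum_{\mathcal{E}\mid\mathcal{D}}1\ll\n\mathcal{D}^\varepsilon$ deliver the $N^\varepsilon$ factor, and choosing the dyadic pair $(\Delta_1,\Delta_2)$ that maximizes the contribution yields the claimed inequality. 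The principal technical obstacle is the careful bookkeeping in passing from the imprimitive product $\chi_{\B_1'}^j\overline{\chi}_{\B_2'}^j$ to its primitive companion at the ramified places $\frakc\cup S$, but since the discrepancy is a bounded $R_\frakc$-twist independent of $N$, it affects only constants and is invisible to the final $N^\varepsilon B_3^j$ bound.
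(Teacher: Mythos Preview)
Your proposal is correct and follows essentially the same approach as the paper: split into ray classes, smooth the $\A$-sum, open the square, extract the gcd $\mathfrak{d}=(\B_1,\B_2)$, remove the coprimality conditions by M\"obius inversion, absorb the resulting character values at $\mathfrak{r}$ into the weights, and sum using the divisor bound. The only cosmetic difference is ordering: the paper passes to the primitive character $\chi_{\B_1',\B_2'}^j$ \emph{while $\A$ is still in $\I(S)$} (where imprimitive and primitive agree exactly, so no ``twist'' language is needed) and then performs a single M\"obius over $\mathfrak{r}\mid\mathfrak{s}\mathfrak{d}$ to remove both $\A\in\I(S)$ and $(\A,\mathfrak{d})=1$ at once, whereas you handle these two coprimality conditions separately.
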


\begin{proof} Fix a sequence $\lambda_{\B}$ with $\B \in \mathcal{I}(S)$, $\B \sim N$ such that $\sum_{\B} |\lambda_{\B}|^2= 1$. By Cauchy's inequality we have
\begin{displaymath}
   \sum_{\substack{\A\sim M\\ \A\in \I(S)}} \Big\vert \sumstar{ \B\sim N\ } \lambda_\B \chi_\B^j(\A) \Big\vert^2 \ll \max_{\mathcal{C} \in R_{\mathfrak{c}}} \sum_{\substack{\A\sim M\\ \A\in \I(S)}} \Big\vert \sumstar{\substack{\B\sim N\\   [\B] = \mathcal{C}}} \lambda_\B \chi_\B^j(\A) \Big\vert^2.
\end{displaymath}
We closely follow the proof of \cite[Lemma 7]{HB1}, and present only the key steps. Insert the factor $W(\n\A/M)$, open the square and sort by $\mathfrak{d} = (\B_1, \B_2)$. Writing %For a given $\mathfrak{d}$ let $[\mathfrak{d}]$ denote its image in $R_{\mathfrak{c}}$ and put
 $\mathcal{C}_{\mathfrak{d}} := \mathcal{C} [\mathfrak{d}]^{-1} \in R_{\frakc}$, we find that the previous display is bounded by
\begin{displaymath}
  \max_{\mathcal{C} \in R_{\mathfrak{c}}} \sum_{\mathfrak{d} \in \mathcal{I}(S)}  \sum_{\substack{  \A\in \I(S)\\ (\A, \mathfrak{d}) = (1)}} W\left(\frac{\n\A}{M}\right) \sumstar{\substack{\B_1, \B_2 \sim N/\n\mathfrak{d}\\ [\B_1]= [\B_2]= \mathcal{C}_{\mathfrak{d}}  \\  (\B_1, \B_2) = (\mathfrak{d}, \B_1\B_2) = (1)}} \lambda_{\B_1\mathfrak{d}}\overline{\lambda_{\B_2\mathfrak{d}}}  \chi_{\B_1, \B_2}^j(\A).
\end{displaymath}
Note that we could replace $\chi_{\B_1} \bar{\chi}_{\B_2}$ by its underlying primitive character $\chi_{\B_1, \B_2}$ since we are only summing over $\A \in \mathcal{I}(S)$.
Now we remove the coprimality conditions in the $\A$-sum by M\"obius inversion. Let $\mathfrak{s}$ denote the product of the primes contained in the set $S$. Recalling that in our situation $\chi_{\B_1, \B_2} = \chi^{(\B_1)} \bar{\chi}^{(\B_2)}$, we writing $\lambda^{\ast}_{\B} := \lambda_{\B\mathfrak{d}}(\chi^{(\B)})^j(\mathfrak{r})$ and recast  the triple $\A, \B_1, \B_2$-sum as
\begin{displaymath}
  \sum_{\mathfrak{r} \mid \mathfrak{s}\mathfrak{d}} \mu(\mathfrak{r})  \sum_{\A\subseteq \mathcal{O}} W\left(\frac{\n\A}{M/\n\mathfrak{r}}\right) \sumstar{\substack{\B_1, \B_2 \sim N/\n\mathfrak{d}\\  [\B_1] = [\B_2] = \mathcal{C}_{\mathfrak{d}}\\   (\B_1, \B_2) = (\mathfrak{d}, \B_1\B_2) = (1)}} \lambda^{\ast}_{\B_1}\overline{\lambda^{\ast}_{\B_2}}  \chi_{\B_1, \B_2}^j(\A).
\end{displaymath}
Hence
\begin{displaymath}
  \Sigma_2^j(M, N, \lambda) \ll \max_{1 \leq \Delta_1 \ll \Delta_2} B_3^j\left(\frac{M}{\Delta_1}, \frac{N}{\Delta_2}\right)  \sum_{\mathfrak{d} \in \mathcal{I}(S)}\sum_{\mathfrak{r} \mid \mathfrak{s}\mathfrak{d}} \sum_{\substack{\B \in \mathcal{I}(S)\\ \B \sim N/\n\mathfrak{d}}} |\lambda_{\B\mathfrak{d}}|^2,
\end{displaymath}
and the triple sum is $\ll N^{\varepsilon} \sum_{\B}|\lambda_{\B}|^2 = N^{\varepsilon}$.
\end{proof}

\begin{lem}\label{lem:B3-B2}
%Let $M,N \ge 1$. Let $\varepsilon>0$. Let $1\le j\le n$. Then,
The following bound holds:
\begin{equation*}
B_3^j(M,N) \ll M+ (MN)^\varepsilon \frac{M}{N} \max_{1\le K\le (MN)^{\varepsilon}N^2/M} B_2^j(K,N).
\end{equation*}
\end{lem}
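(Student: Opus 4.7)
The plan is to apply Lemma~\ref{poisson} to the inner $\A$-sum in $\Sigma_3^j(M, N, \lambda, \mathcal{C})$, decouple the joint $\B_1, \B_2, \A$ dependencies via a Mellin transform, and then invoke $B_2^j$ on dyadic pieces. By Lemma~\ref{conductor}, the character $\chi_{\B_1,\B_2}^j$ in \eqref{sigma3} is primitive and non-trivial of conductor $\B_1\B_2$, so Lemma~\ref{poisson} applies and gives
\[
\sum_\A W(\n\A/M)\chi_{\B_1,\B_2}^j(\A) = \frac{M\,\epsilon(\chi_{\B_1,\B_2}^j)}{\sqrt{\n(\B_1\B_2)}}\sum_\A \dot W\!\left(\frac{M\n\A}{\n(\B_1\B_2)}\right)\overline{\chi_{\B_1,\B_2}^j}(\A).
\]
Since the normalization in the functional equation gives $|\epsilon(\chi_{\B_1,\B_2}^j)| = 1$, the prefactor has magnitude $M/\sqrt{\n(\B_1\B_2)} \asymp M/N$, which is the source of the desired $M/N$ saving.

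Via \eqref{gauss} I factor $\epsilon(\chi_{\B_1,\B_2}^j) = \eta\,h_1(\B_1)\,h_2(\B_2)$ with $|\eta|=|h_i|=1$, and absorb the unit-modulus quantities $h_i(\B_i)$ together with the split factors $1/\sqrt{\n\B_i}$ into modified sequences $\lambda'_{\B_1}, \mu'_{\B_2}$ satisfying $|\lambda'_\B| = |\mu'_\B| = |\lambda_\B|/\sqrt{\n\B}$; in particular $\sum_\B|\lambda'_\B|^2 \asymp N^{-1}\sum_\B|\lambda_\B|^2$. To decouple the joint $\n(\B_1\B_2)$-dependence inside $\dot W$, I expand $\dot W$ via its Mellin transform along $\Re s = \varepsilon$ (shifted slightly off the pole of $\tilde{\dot W}$ at $s=0$), the integrand decaying rapidly in $|\Im s|$ by smoothness of $W$. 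I then dyadically decompose the $\A$-sum into ranges $\A\sim K$; by the rapid decay \eqref{eq:B3-B2:2} of $\dot W$, only $K \le (MN)^\varepsilon N^2/M$ contribute above a negligible tail.

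For each such $K$, Cauchy--Schwarz on $\A$ separates the $\B_1, \B_2$ sums:
\[
\Bigl|\sum_{\A \sim K}\n\A^{-s} F_s(\A)\,G_s(\A)\Bigr| \ll K^{-\varepsilon}\Bigl(\sum_{\A \sim K}|F_s(\A)|^2\Bigr)^{1/2}\Bigl(\sum_{\A \sim K}|G_s(\A)|^2\Bigr)^{1/2},
\]
where $F_s(\A) = \sum_{\B_1}\lambda'_{\B_1}\n\B_1^s\overline{\chi^{(\B_1)j}}(\A)$ and $G_s(\A)$ is defined analogously with $\mu'_{\B_2}$ and $\chi^{(\B_2)j}(\A)$. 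Using the identity $\chi^{(\B)} = \chi_\B\bar\psi_{(\B)}$ from \eqref{decompos} together with the fact that $\psi_{(\B)}$ depends only on the class $[\B] = \mathcal{C}$, I identify the inner $\A$-sums with instances of $\Sigma_2^{n-j}$. Invoking the symmetry $B_2^{n-j}(K,N)=B_2^j(K,N)$ yields $\sum_{\A \sim K}|F_s(\A)|^2 \ll N^{2\varepsilon}B_2^j(K, N)\sum_\B|\lambda'_\B|^2 \asymp (MN)^\varepsilon N^{-1}B_2^j(K, N)\sum_\B|\lambda_\B|^2$. Combining with the prefactor $M$ and summing over the $O(\log(MN))$ relevant dyadic $K$ gives the second term $(MN)^\varepsilon (M/N)\max_K B_2^j(K, N)\sum_\B|\lambda_\B|^2$.

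The first term $M$ handles the complementary regime $M \gg N^2(MN)^\varepsilon$ in which the dyadic range for $K$ is empty. Here P\'olya--Vinogradov for the primitive non-trivial character $\chi_{\B_1,\B_2}^j$ of conductor $\ll N^2$ gives $|\sum_\A W(\n\A/M)\chi_{\B_1,\B_2}^j(\A)| \ll N(MN)^\varepsilon$, whence Cauchy--Schwarz on $\B_1, \B_2$ yields $\Sigma_3 \ll N^{2+\varepsilon}\sum_\B|\lambda_\B|^2 \ll M\sum_\B|\lambda_\B|^2$. The hardest steps will be (i) carefully carrying the twists by $\psi_{(\B)}$ through the absorption step so that the inner sums genuinely fit the definition of $\Sigma_2^j$, and (ii) reintroducing the restriction $\A \in \I(S)$ implicit in $\Sigma_2^j$ (the Poisson sum runs over all $\A \subseteq \mathcal{O}$) via a M\"obius inversion over divisors in $S$, parallel to the technique used in the proof of Lemma~\ref{lem:B2-B3}.
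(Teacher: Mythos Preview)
Your approach is essentially the same as the paper's: apply Lemma~\ref{poisson}, factor the root number via Lemma~\ref{conductor}, open $\dot W$ as a Mellin integral, split $\A$ dyadically, and reduce to $B_2^j$ via Cauchy--Schwarz. Two points deserve attention.

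\medskip
\textbf{A genuine gap: the coprimality condition.} In $\Sigma_3^j$ the double sum over $\B_1,\B_2$ carries the constraint $(\B_1,\B_2)=(1)$. Your functions $F_s(\A)=\sum_{\B_1}\lambda'_{\B_1}\n\B_1^s\,\overline{\chi^{(\B_1)j}}(\A)$ and $G_s(\A)$ are defined as \emph{unrestricted} sums, so $F_s(\A)G_s(\A)$ is not the bilinear form appearing after Poisson --- the cross terms with $(\B_1,\B_2)\neq(1)$ are extra. The paper removes the coprimality condition by M\"obius over $\mathfrak d\mid(\B_1,\B_2)$ \emph{before} Cauchy--Schwarz (citing \cite[p.~121]{HB1}), obtaining
\[
\sum_{\n\mathfrak d\le 2N}B_2^j(K,N)\sumstar{\mathfrak d\mid\B}|\lambda_\B|^2\,N^{2\varepsilon-1}\ \ll\ N^{3\varepsilon-1}B_2^j(K,N).
\]
This step is missing from your outline and should be inserted.

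\medskip
\textbf{The $M$ term.} Your P\'olya--Vinogradov argument for the regime $M\gg N^2(MN)^\varepsilon$ is valid but unnecessary: once Poisson is applied (to a non-trivial character), the rapid decay \eqref{eq:B3-B2:2} already makes the contribution $\ll(MN)^{-A}$ when the dyadic $K$-range is empty. The paper instead obtains the $M$ term from the trivial estimate when $N\le 1$, the only regime in which $\B_1=\B_2=(1)$ can occur and the character can be trivial. Your sketch does not cover this degenerate case (Lemma~\ref{conductor} gives conductor $\B_1\B_2=(1)$ there), so the claim ``$\chi_{\B_1,\B_2}^j$ is non-trivial'' needs the hypothesis $N>1$.

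\medskip
One further remark: you assert that $\psi_{(\B)}$ depends only on $[\B]=\mathcal C$. What is actually available (see the proof of Lemma~\ref{conductor}) is that $\psi_{(\B_1)}^j$ and $\psi_{(\B_2)}^j$ agree on ideals coprime to $S$ when $[\B_1]=[\B_2]$. The paper sidesteps this by writing $\A=\A_0\mathfrak s$ with $\A_0\in\I(S)$, $\mathfrak s\mid\mathfrak c^\infty$, and then replacing $\bar\chi_{\B_1,\B_2}(\A_0)$ by $\bar\chi_{\B_1}\chi_{\B_2}(\A_0)$ directly (the primitive and imprimitive characters coincide on $\I(S)$). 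Your M\"obius-over-$S$ proposal would achieve the same end.
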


\begin{proof}
If $N\le1$, we estimate trivially, getting $B_3^j(M,N)\ll M$. Assume now $N>1$ and fix a sequence $\lambda_{\B}$ with $\B \sim N$, $[\B] =\mathcal{C}$ for some fixed class $\mathcal{C} \in R_{\mathfrak{c}}$, $\B \in \mathcal{I}(S)$ such that $\sum_{\B} |\lambda_{\B}|^2 = 1$.  These assumptions imply  that  the character $\chi^j_{\B_1, \B_2}$ %= \chi^j_{\B_1} \bar{\chi}^j_{\B_2}$
in the definition of $\Sigma_3^j(M, N, \lambda, \mathcal{C})$ is a non-trivial primitive character with conductor $\B_1\B_2$ (cf.\ Lemma \ref{conductor}). Using Lemma \ref{poisson} and \eqref{gauss} for the inner sum in \eqref{sigma3}, we obtain
%, and carry out the
%Recall that $d=[K:\Q]$ and let $d_K$ be the discriminant of $K$. Let $\epsilon(\chi)$ be the root number of $L(s,\chi)$, of absolute value $1$. From the properties obtained in Section\,\ref{sect:char}, we know that the $L$-function of the primitive Hecke character $\chi_\A$ modulo $\f_\A$ satisfies a functional equation of the form
%\begin{equation}\label{eq:B3-B2:1}
%\left(\frac{|d_K|}{(2\pi)^d}\right)^{s/2} \Gamma(s)^{d/2} L(s,\chi_\A) = \epsilon(\chi_\A) (\n \f_\A)^{1/2-s}  \left(\frac{|d_K|}{(2\pi)^d}\right)^{(1-s)/2} \Gamma(1-s)^{d/2}L(1-s,\chi_\A).
%\end{equation}
%This holds for the character $\chi_{\B_1,\B_2}^j$ as well.
%Define, for a smooth function $h$ with compact support, the transform $\dot{h}$ of $h$ by
%\begin{equation*}
%\dot{h}(x) =\int_0^\infty h(t) k(tx) \,dt.
%\end{equation*}
%Here, the kernel $k(t)$ is defined as the inverse Mellin transform of $(\Gamma(s)/\Gamma(1-s))^{d/2} (2\pi)^{ds/2} |d_K|^{-s/2}$. Then one has for all $A>0$
%\begin{equation}\label{eq:B3-B2:2}
%\dot{h}(x) \ll |x|^{-A},
%\end{equation}
%and Poisson summation formula (see \cite[Lemma 7.2]{GL})
%states that
%\begin{equation}\label{eq:B3-B2:3}
%\sum_{\A\neq 0} W\left(\frac{\n\A}{M}\right) \chi^j_{\B_1,\B_2}(\A) = \frac{M \epsilon(\chi_{\B_1,\B_2})}{\sqrt{\n \f_{\B_1,\B_2}}} \sum_{\A\neq 0} \dot{W}\left(\frac{M \n\A}{\n \f_{\B_1,\B_2}}\right) \overline{\chi}^j_{\B_1,\B_2}(\A).
%\end{equation}
%Using \eqref{eq:B3-B2:3}, we obtain
\begin{equation}\label{eq:B3-B2:4}
\begin{split}
\Sigma_3^j(M,N,\lambda, \mathcal{C}) \ll M  \sum_{\A\subseteq \mathcal{O}}& \Big\vert \sumstar{\substack{  \B_1, \B_2 \sim N\\(\B_1,\B_2)=(1)\\ [\B_1] =  [\B_2] = \mathcal{C}}}\epsilon((\chi^{(\B_1)})^j)\epsilon((\bar{\chi}^{(\B_2)})^j)\bar{\psi}_{(\B_2)}(\B_2)^j \psi_{(\B_1)}(\B_1)^j  \\
&\times  \lambda_{\B_1}\overline{\lambda_{\B_2}}\dot{W}\left(\frac{M\n\A}{\n(\B_1\B_2)}\right) \frac{\overline{\chi}^j_{\B_1,\B_2}(\A)}{\sqrt{\n(\B_1\B_2)}}\Big\vert.
\end{split}
\end{equation}
We factorize $\A = \A_0 \mathfrak{s}$ where $\A_0 \in \mathcal{I}(S)$ and $\mathfrak{s} \mid \mathfrak{c}^{\infty}$. Next we split the $\A_0$-sum into dyadic ranges $\A \sim K$ for $K \geq 1/2$.   Note that the term $\A = (0)$ does not occur  in \eqref{eq:B3-B2:4}, because $\bar{\chi}^j_{\B_1, \B_2}$ is non-trivial.  For notational simplicity let us write $Z := (MN)^{\varepsilon}$.  By \eqref{eq:B3-B2:2} we can neglect the values of $K$ with $K> ZN^2/(M\n\mathfrak{s})$, at the cost of an error $(MN)^{-A}$. We insert the definition of $\dot{W}$ and shift the contour to $\Re s = \varepsilon$. By the rapid decay of $\widehat{W}$ we can truncate the integral at $|\Im s| \leq Z^{\varepsilon}$, again at the cost of an error $(MN)^{-A}$. %We separate variables by Mellin inversion and write $\dot{W}(x) = \int_{(\varepsilon)} V(s) x^{-s} \,ds$, where $V$ is rapidly decaying on the line $(\varepsilon)$.
In this way we bound the right hand side of \eqref{eq:B3-B2:4} by
\begin{equation*}%\label{eq:B3-B2:5}
(MN)^{-A} + Z M\sum_{\mathfrak{s} \mid \mathfrak{c}^{\infty}} \max_{K\le \frac{ZN^2}{M\n\mathfrak{s}}} \max_{|t| \leq Z} \sum_{\substack{\A_0\in\I(S)\\ \A_0\sim K}} \big\vert\sumstar{\substack{  \B_1, \B_2 \sim N\\ (\B_1,\B_2)=(1)\\ [\B_1]= [\B_2] = \mathcal{C}}} \lambda^{\ast}_{\B_1} \lambda^{\ast\ast}_{\B_2} \frac{\overline{\chi}^j_{\B_1}(\A_0)\chi^j_{\B_2}(\A_0)}{\n(\B_1\B_2)^{1/2+it-\varepsilon}}\Big\vert
\end{equation*}
where
\begin{displaymath}
  \lambda^{\ast}_{\B} := \lambda_{\B}  \epsilon((\chi^{(\B)})^j)\psi_{(\B)}(\B)^j \bar{\chi}^{(\B)}(\mathfrak{s})^j \quad \text{and} \quad \lambda^{\ast\ast }_{\B} := \overline{\lambda_{\B}}   \epsilon((\bar{\chi}^{(\B)})^j)\bar{\psi}_{(\B)}(\B)^j\chi^{(\B)}(\mathfrak{s})^j .
\end{displaymath}
   Note that we replaced $\bar{\chi}_{\B_1, \B_2}(\A_0)$ by $\bar{\chi}_{\B_1} \chi_{\B_2}(\A_0)$ since $\A_0 \in \mathcal{I}(S)$.
%Note that $\chi_{\B_1,\B_2}=\chi_{\B_1} \overline{\chi_{\B_2}}$ on $\I(S)$, and
%Recall that $\n\B_1\n\B_2 \le \n \f_{\B_1,\B_2} \le \n \frakc \n \B_1\n\B_2$.
Detecting coprimality with the M\"obius function and using Cauchy's inequality, we conclude verbatim as in \cite[p.\ 121]{HB1} that the $\A, \B_1, \B_2$-sum is bounded by %\marginpar{CORRECT THIS}
\begin{align*}
%\sum_{\n \mathfrak{d} \le 2N} \sum_{\substack{\A_0\in\I(S)\\ \A_0\sim K}} \Big\vert\sumstar{\mathfrak{d} \mid \B} \lambda_\B  \frac{\chi^j_\B(\A_0)}{(\n\B)^{1/2+it}}\Big\vert^2
\ll \sum_{\n \mathfrak{d} \le 2N} B_2^j(K,N) \sumstar{  \mathfrak{d} \mid \B} |\lambda_\B|^2 N^{2\varepsilon-1} \ll N^{3\varepsilon-1} B_2^j(K, N).
%&\ll N^{-1+\varepsilon} B_2^j(K,N) \sumstar{\substack{\B\in\I(S)\\ \B\sim N}}|\lambda_\B|.
\end{align*}
It remains to observe that $\# \{\mathfrak{s} \mid \mathfrak{c}^{\infty} : \n\mathfrak{s} \leq ZN^2\} \ll (ZN^2)^{\varepsilon}$.
\end{proof}

In order to state our last result, we need some more notation. Given $M\ge 1/2$, let
\begin{equation*}
\Xi = \left\{ \textbf{X}=(X_1,\dots,X_n)\in [1/2,\infty)^n
\,:\, 
\prod_{i=1}^n X_i^i<M \leq \frac{1}{2}   \prod_{i=1}^n (2X_i)^i
\right\}.
\end{equation*}
For $\textbf{X}=(X_1,\dots,X_n)\in \Xi$, we write $X=\prod_{i=1}^n X_i$, and we define $\ell =\ell(\textbf{X})$ by $\ell=1$ or $\ell=2$, according to if $X_1\ge X_2$ or $X_1<X_2$. In particular, $X_{\ell} = \max(X_1, X_2)$.

\begin{lem}\label{lem:B2-B1}
%Let $M,N\ge 1$. Let $\varepsilon>0$. Let $1\le j\le n$, $j\neq n/2$.
Suppose that $j \not= n/2$. Then
\begin{equation*}
B_2^j(M,N) \ll (MN)^\varepsilon \max_{\textbf{X} \in \Xi} \left\{ \min \left(M^{1/3},\Bigl(\frac{M}{X_\ell}\Bigr)^{2/3} \right) B_1^{j\ell}(X_\ell,N)\right\}.
\end{equation*}
\end{lem}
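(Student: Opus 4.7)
The plan is to follow Heath-Brown's strategy from \cite[Lemma 8]{HB1}, adapted to $n$-th order characters. First, I will exploit the factorization $\A = \prod_{i=1}^n \A_i^i$ (with the $\A_i$ squarefree and pairwise coprime) together with the identity $\chi_\B^j(\A) = \prod_{i=1}^n \chi_\B^{ij}(\A_i)$ to rewrite $\Sigma_2^j(M,N,\lambda)$ as a multiple sum over the $\A_i$. A dyadic split $\A_i\sim X_i$ isolates shapes $\textbf{X}\in\Xi$, of which there are only $O((\log M)^n) = O((MN)^\varepsilon)$, accounting for the claimed $\varepsilon$-loss.

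Next, for each shape I single out the index $\ell = \ell(\textbf{X}) \in \{1,2\}$ so that $X_\ell = \max(X_1, X_2)$, and treat $\A_\ell$ as the probing variable. Sorting the $\A_\ell$- and $\B$-sums into classes modulo $R_\frakc$ and applying the reciprocity formula from Lemma \ref{lemma21}, I substitute $\chi_\B^{j\ell}(\A_\ell)$ by $\chi_{\A_\ell}^{j\ell}(\B)$ up to a unimodular class-dependent factor. The hypothesis $j\neq n/2$ enters precisely here: it guarantees $j,2j\not\equiv 0\pmod n$, so that $\chi_\B^{j\ell}$ is a non-trivial $n$-th order character for both $\ell=1$ and $\ell=2$, making the reciprocity step non-degenerate. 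Absorbing the remaining factors $\prod_{i\ne\ell}\chi_\B^{ij}(\A_i)$ into new coefficients $\mu_\B$ with $|\mu_\B|=|\lambda_\B|$, the definition of $B_1^{j\ell}$ then yields, for each fixed tuple of non-probing variables $(\A_i)_{i\neq\ell}$,
\[
\sum_{\A_\ell\sim X_\ell}^{*} \Bigl|\sum_\B \mu_\B\, \chi_{\A_\ell}^{j\ell}(\B)\Bigr|^2 \ll B_1^{j\ell}(X_\ell, N) \sum_\B |\lambda_\B|^2 .
\]

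The remaining task is to count the admissible non-probing tuples $(\A_i)_{i\neq\ell}$. The naive count is $\prod_{i\neq\ell} X_i$; combining it with the norm constraint $\prod_i X_i^i \asymp M$ and the dominance property $X_\ell = \max(X_1, X_2)$, a short linear-programming argument on the variables $\log X_i$ bounds this product simultaneously by $M^{1/3}$ and by $(M/X_\ell)^{2/3}$. Heuristically, the exponent $1/3$ reflects the classical bound $O(Y^{1/3+\varepsilon})$ on the number of cube-full ideals of norm at most $Y$, applicable once the piece $\prod_{i\geq 3}\A_i^i$ is isolated; the exponent $2/3$ is the better bound in the regime where $X_\ell$ is comparable to or larger than $M^{1/2}$, and it is forced by the fact that $X_\ell$ is the largest squarefree factor. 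Taking the minimum of the two bounds and summing over the $O((\log M)^n)$ shapes completes the argument.

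The main (modest) obstacle is the combinatorial verification that both counting bounds hold uniformly in all shape regimes: this reduces to a routine case analysis according to whether $X_\ell \ge M^{1/3}$ and whether the dominance inequality $X_1 \leq X_2$ (or its reverse) is binding at the LP maximum. The restriction $\ell\in\{1,2\}$ is natural, because only these two indices can carry a potentially large squarefree piece; for $i\geq 3$ one has the trivial bound $X_i \leq M^{1/i} \leq M^{1/3}$, which is already compatible with the target estimate.
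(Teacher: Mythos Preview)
Your proposal is correct and follows essentially the same route as the paper's proof: decompose $\A=\A_1\A_2^2\cdots\A_n^n$, split dyadically into shapes $\textbf{X}\in\Xi$, isolate $\A_\ell$ with $\ell\in\{1,2\}$ as the probing variable, absorb the remaining character factors into modified coefficients $\lambda_\B^\ast$ with $|\lambda_\B^\ast|\le|\lambda_\B|$, and then bound $\prod_{i\ne\ell}X_i$ by $\min(M^{1/3},(M/X_\ell)^{2/3})$ via the short case analysis you outline. One minor remark: you make the reciprocity step (passing from $\chi_\B^{j\ell}(\A_\ell)$ to $\chi_{\A_\ell}^{j\ell}(\B)$ after splitting into $R_\frakc$-classes) explicit, whereas the paper writes $\Sigma_1^{j\ell}(X_\ell,N,\lambda^\ast)$ directly without comment; your version is arguably cleaner here. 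Your phrasing that $j\ne n/2$ keeps the ``reciprocity step non-degenerate'' is slightly off---reciprocity itself is fine even when $j\ell\equiv 0$; the real point (as the paper states) is that one needs $j\ell\not\equiv 0\pmod n$ so that $B_1^{j\ell}$ is a meaningful norm and not the trivially large quantity with a constant character.
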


\begin{proof}
%Let $M$, $N$, $\lambda$ and $j$ be given.
Recall that any ideal $\A\in I(S)$ can be decomposed uniquely as $\A=\A_1\A_2^2 \dots \A_{n-1}^{n-1} \A_n^n$, with $\A_i$ square-free for all $i=1,\dots, n-1$. %Let us now fix
For $\textbf{X}=(X_1,\dots,X_n) \in \Xi$ define
\begin{equation*}
\Sigma_4^j(M,N,\lambda,\textbf{X}) = %\prod_{i=1}^{n-1}
\sumstar{\A_1 \sim X_1} \cdots \sumstar{\A_{n-1} \sim X_{n-1}} \sum_{\substack{\A_n \sim X_n\\ \A_n \in \mathcal{I}(S)}} \Big\vert \sumstar{\B\sim N} \lambda_\B \chi_\B(\A_1\A_2^2 \dots \A_{n-1}^{n-1} \A_n^n)\Big\vert^2.
\end{equation*}
%Here all summations are understood as sums over ideals in $\mathcal{I}(S)$.
Then clearly $\Sigma_2^j(M,N,\lambda) \ll \log(2+M)^{n-1}\max_{\textbf{X}\in \Xi} \Sigma_4^j(M,N,\lambda,\textbf{X})$. Let $1\le \ell\le 2$ be as defined above and consider only the $\A_\ell$-sum together with the $\B$-sum, the other variables being fixed. Define a new sequence $\lambda^{\ast}$ by $\lambda_\B^{\ast}= \lambda_\B \chi_\B(\A/\A_{\ell}^{\ell})$. %\A_1\A_2^2 \dots \A_{\ell-1}^{\ell-1} \A_{\ell+1}^{\ell+1} \dots \A_{n-1}^{n-1} \A_n^n)$.
Then one sees that
\begin{equation*}
\Sigma_4^j(M,N,\lambda,\textbf{X}) =%\prod_{\substack{i=1\\i\neq \ell}}^{n-1}
\sumstar{\substack{
	\A_j \sim X_j\\ 
	1 \leq j \leq n-1\\ 
	j \not=\ell
}} % \cdots \sumstar{\A_{n-1} \sim X_{n-1}}
\sum_{\substack{\A_n \sim X_n\\ \A_n \in \mathcal{I}(S)}} \Sigma_1^{j\ell}(X_\ell,N,\lambda^{\ast}).
\end{equation*}
It is here that we need $j \neq n/2$, in order to ensure that $j\ell \not\equiv 0$ (mod $n$). Since $|\lambda_\B^{\ast}| \leq |\lambda_{\B}|$, it follows that
\begin{equation}\label{eq:B2-B1:1}
B_2^j(M,N) \ll (MN)^\varepsilon \max_{\textbf{X}\in\Xi} \ \frac{X}{X_\ell} B_1^{j\ell}(X_\ell,N).
\end{equation}
Recall that by definition, $X_1X_2^2\dots X_n^n\leq M$ for all $\textbf{X} \in \Xi$. If % we merely notice that if
$\ell=1$, i.e. $X_1\ge X_2$, one has
\begin{equation}\label{eq:B2-B1:2}
\frac{X}{X_1}= (X_2^2\dots X_n^2)^{1/2} \leq \left(\frac{M}{X_1}\right)^{1/2}\ll \left(\frac{M}{X_1}\right)^{2/3}
\end{equation}
as well as
\begin{equation}\label{eq:B2-B1:3}
\frac{X}{X_1}= (X_2^3\dots X_n^3)^{1/3} \leq \left(\frac{MX_2}{X_1}\right)^{1/3}\leq M^{1/3}.
\end{equation}
If $\ell=2$, i.e. $X_1<X_2$, one has
\begin{equation}\label{eq:B2-B1:4}
\frac{X}{X_2}= (X_1^3X_3^3\dots X_n^3)^{1/3} \leq \left(\frac{MX_1^2}{X_2^2}\right)^{1/3}\leq M^{1/3}\ll \left(\frac{M}{X_2}\right)^{2/3}.
\end{equation}
The lemma now follows from \eqref{eq:B2-B1:1} --  \eqref{eq:B2-B1:4}.
\end{proof}

\subsection{The recursive argument}
\label{subsect:rec}

The idea of the proof of Theorem \ref{thm:thm3} is to improve iteratively on the bounds
\begin{equation*}\tag{$E_\alpha$}
B_1^j(M,N) \ll (MN)^\varepsilon (M^\alpha+N+(MN)^{2/3}) %, \quad \forall M,N,j,\varepsilon
\end{equation*}
and
\begin{equation*}\tag{$F_\alpha$}
B_1^j(M,N) \ll (MN)^\varepsilon (M+N^\alpha+(MN)^{2/3}). %, \quad \forall M,N,j,\varepsilon
\end{equation*}
By \eqref{duality}, the statements $(E_{\alpha})$ and $(F_{\alpha})$ are equivalent.

%Note that $B_1$ satisfies the duality relation $B_1^j(M,N) \ll B_1^j(N,M)$ (See \cite[Lemma 4.1]{GL}), which implies that the hypothesis $(E_\alpha)$ and $(F_\alpha)$ are equivalent.\\

\begin{lem}\label{rec}
Assume that $(E_\alpha)$ holds for some $\alpha > 4/3$. Then $(E_{2 - \frac{2}{3\alpha-1}})$ holds.
\end{lem}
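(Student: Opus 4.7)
The plan is to apply the chain of inequalities developed in Subsection 3.1 in order to reduce the bound on $B_1^j(M,N)$ to one involving $B_1^{j\ell}(X_\ell,N')$ at smaller parameters, and then to invoke the assumption $(E_\alpha)$ together with its dual counterpart obtained via \eqref{duality}. Explicitly, combining \eqref{B12}, Lemma \ref{lem:B2-B3}, Lemma \ref{lem:B3-B2}, and Lemma \ref{lem:B2-B1} in that order yields, modulo harmless factors of $(MN)^\varepsilon$ and the shifts by $\Delta_1,\Delta_2$,
\[
B_1^j(M,N) \ll M + \frac{M}{N}\,\max_{K\le N^2/M}\,\max_{\textbf{X}\in\Xi}\, \min\!\bigl(K^{1/3},(K/X_\ell)^{2/3}\bigr)\,B_1^{j\ell}(X_\ell,N').
\]
At the innermost step, $(E_\alpha)$ gives $B_1^{j\ell}(X_\ell,N')\ll X_\ell^\alpha+N'+(X_\ell N')^{2/3}$ in the range $X_\ell\le N'$, while \eqref{duality} applied to the hypothesis furnishes the symmetric bound $B_1^{j\ell}(X_\ell,N')\ll X_\ell+(N')^\alpha+(X_\ell N')^{2/3}$ when $X_\ell\ge N'$, which is sharper there.

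Substituting these estimates and evaluating the double maximum by elementary calculus, noting that the $\min$-factor peaks at $X_\ell=K^{1/2}$ with value $K^{1/3}$ while the contributions $X_\ell^\alpha$ and $X_\ell$ are increasing in $X_\ell$, I expect to obtain an estimate of the shape
\[
B_1^j(M,N)\ll(MN)^\varepsilon\bigl(M+(MN)^{2/3}+M^{1/3}N+M^{1-\alpha}N^{2\alpha-1}\bigr).
\]
The terms $M$ and $(MN)^{2/3}$ already appear in the target $M^{\alpha'}+N+(MN)^{2/3}$, and $M^{1/3}N$ is absorbed into $(MN)^{2/3}$ when $N\le M$ and into $N^{\alpha'}$ otherwise, using $\alpha'\ge 4/3$ together with \eqref{duality} to swap the roles of $M$ and $N$. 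The crucial term is $M^{1-\alpha}N^{2\alpha-1}$: setting $N=M^\beta$ it equals $M^{1-\alpha+(2\alpha-1)\beta}$, and a direct comparison shows that this is absorbed into $M^{\alpha'}$ for $\beta\le(\alpha+\alpha'-1)/(2\alpha-1)$, into $(MN)^{2/3}$ for $\beta\le(3\alpha-1)/(6\alpha-5)$, and into $N$ for $\beta\le 1/2$. The value $\alpha'=2-2/(3\alpha-1)$ is singled out as the one for which these three regimes, supplemented by a dual application of the recursion to $B_1^j(N,M)$ in the range $N>M$, piece together to cover the full parameter space.

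The principal obstacle will be the meticulous case analysis verifying that the three coverage regimes above, in combination with the dualised estimate for $N>M$, really do cover every $(M,N)$ uniformly; the hypothesis $\alpha>4/3$ enters essentially at this step, as it is exactly the condition under which the algebraic inequalities between $r_1=(\alpha+\alpha'-1)/(2\alpha-1)$, $r_2=(3\alpha-1)/(6\alpha-5)$, and $1/2$ fit together as required. A secondary technical point is the exclusion $j\ne n/2$ in Lemma \ref{lem:B2-B1}: since the recursion produces $B_1^{j\ell}$ with $\ell\in\{1,2\}$, one must ensure that $j\ell$ never lands on the forbidden value $n/2$, which can be arranged by noting that the $\ell=1$ branch (where $j\ell=j$) dominates in any regime where an $\ell=2$ obstruction might arise.
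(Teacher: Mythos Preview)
Your chain of lemmas and the intermediate bound
\[
B_1^j(M,N)\ll(MN)^\varepsilon\bigl(M+(MN)^{2/3}+M^{1-\alpha}N^{2\alpha-1}\bigr)
\]
are exactly right (the extra $M^{1/3}N$ in your display is a miscomputation: when you feed $B_2^j(K,N)\ll K^\alpha+K^{1/3}N$ into Lemma~\ref{lem:B3-B2}, the term $(M/N)\cdot K^{1/3}N$ with $K\le N^2/M$ produces $(MN)^{2/3}$, not $M^{1/3}N$). However, two points in the proposal are genuine gaps.

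\textbf{The case $j=n/2$.} Your proposed fix is incorrect. The hypothesis $j\ne n/2$ in Lemma~\ref{lem:B2-B1} applies to the \emph{input} $j$, and the value of $\ell\in\{1,2\}$ is dictated by the sizes $X_1,X_2$ of the decomposition of $\A$; you cannot simply ``choose $\ell=1$'' when $X_2>X_1$, since in that regime the bound $X/X_1$ is genuinely larger than $X/X_2$ and \eqref{eq:B2-B1:1} fails. The paper handles $j=n/2$ separately by observing that $\chi^{n/2}$ is quadratic and invoking the optimal quadratic large sieve \cite[Corollary~1.5]{GL}, which gives $B_1^{n/2}(M,N)\ll(MN)^\varepsilon(M+N)$ outright, more than enough for \eqref{eq:rec1}.

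\textbf{Missing monotonicity.} The case analysis you sketch does not close. The term $M^{1-\alpha}N^{2\alpha-1}$ is \emph{decreasing} in $M$, so for $M$ small relative to $N$ it cannot be absorbed into $M^{\alpha'}+N+(MN)^{2/3}$ directly; and your dual application for $N>M$ gives $N^{1-\alpha}M^{2\alpha-1}$, which is only absorbable once $N$ is sufficiently larger than $M$. Concretely, at $\alpha=2$ (so $\alpha'=8/5$), the direct bound covers $N\le M^{13/15}$ and the dual bound covers $N\ge M^{7/5}$, leaving the range $M^{13/15}<N<M^{7/5}$ untreated --- indeed at $M=N$ the bound reads $M^2$, which exceeds the target $M^{8/5}$. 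The paper bridges this gap using the monotonicity \eqref{mono}: in the range $M\le N^{2-3/(3\alpha-1)}$ one replaces $M$ by $CN^{2-3/(3\alpha-1)}\log(2MN)$ and applies the intermediate bound there, which yields exactly $(F_{\alpha'})$, equivalent to $(E_{\alpha'})$ by duality.
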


\begin{proof}
Let us assume $(E_{\alpha})$. As a first step we will show that
\begin{equation}\label{eq:rec1}
B_1^j(M,N) \ll B_2^j(M,N)   \ll  (MN)^\varepsilon \left(M+ N^{2\alpha-1}M^{1-\alpha} +(MN)^{2/3}\right).
\end{equation}
The first inequality follows from \eqref{B12}. In order to verify the second, let us first assume $j \not= n/2$. Then
%Let $1\le j\le n$, with $j\neq n/2$. Then, for any $M,N\ge 1$, one has by
Lemma \ref{lem:B2-B1} implies
\begin{align*}
B_2^j(M,N)
&\ll (MN)^\varepsilon \max_{\textbf{X}\in\Xi} \ \min \left(M^{1/3},\Bigl(\frac{M}{X_\ell}\Bigr)^{2/3} \right) B_1^{j\ell}(X_\ell,N)\\
&\ll (MN)^\varepsilon \max_{\textbf{X}\in\Xi} \ \min \left(M^{1/3},\Bigl(\frac{M}{X_\ell}\Bigr)^{2/3} \right) (X_\ell^\alpha+N+ (X_\ell N)^{2/3})\\
&\ll (MN)^\varepsilon \max_{\textbf{X}\in\Xi} \ \left(\Bigl(\frac{M}{X_\ell}\Bigr)^\alpha X_\ell^\alpha+ M^{1/3}N + \Bigl(\frac{M}{X_\ell}\Bigr)^{2/3} (X_\ell N)^{2/3}\right)\\
&= (MN)^\varepsilon \left(M^\alpha +M^{1/3}N + (MN)^{2/3}\right)\ll (MN)^\varepsilon \left(M^\alpha +M^{1/3}N\right).
\end{align*}
Substituting in Lemma \ref{lem:B3-B2} gives
\begin{equation*}
B_3^j(M,N) \ll  (MN)^\varepsilon \left(M+ N^{2\alpha-1}M^{1-\alpha} +(MN)^{2/3}\right).
\end{equation*}
Finally, we apply Lemma \ref{lem:B2-B3}. By noting that $\Delta_1^{\alpha-1}\Delta_2^{1-2\alpha}= (\Delta_2/\Delta_1)^{1-\alpha} \Delta_2^{-\alpha} \ll 1$, we arrive at \eqref{eq:rec1} under the extra assumption $j \not= n/2$.

If $j=n/2$, then we have from \cite[Corollary 1.5]{GL} (which is applicable because of Lemma \ref{lemma21} and \ref{conductor}) the optimal estimate $B_1^{j} (M,N) \ll (MN)^\varepsilon(M+N)$. %This theorem  is applicable by our earlier remark that  $\{\chi_{\A}^{n/2} \mid \A \in \mathcal{I}(S)\}$ is a quadratic Hecke family.
Therefore, \eqref{eq:rec1} holds in all cases.\\

We can now distinguish two cases in \eqref{eq:rec1}. If $M\ge  N^{2-3/(3\alpha-1)}$, then the second term on the right hand side of \eqref{eq:rec1} is bounded by the third, and $(E_{2-\frac{2}{3\alpha-1}})$ follows a fortiori. On the other hand, if $M\le  N^{2-3/(3\alpha-1)} $, we use \eqref{mono} to bound the left hand side of \eqref{eq:rec1} by
\begin{displaymath}
   B_1^j(C N^{2-3/(3\alpha-1)}  \log(2NM) ,N),
\end{displaymath}
which by \eqref{eq:rec1} is in turn bounded by $(MN)^\varepsilon (M+N^{2-2/(3\alpha-1)}+(MN)^{2/3})$. Hence $(F_{2-\frac{2}{3\alpha-1}})$ is true and the lemma follows by duality.
\end{proof}

To complete the proof of Theorem \ref{thm:thm3}, we recall from \cite[Lemma 2.2]{GL} (see also \cite[(21)]{HB1}) that $(E_2)$ holds. Thus, by iterating Lemma \ref{rec}, we deduce that $(E_\alpha)$ holds for the fixed point of $\alpha \mapsto 2-2/(3\alpha-1)$, i.e. for $\alpha=4/3$. This completes the proof of Theorem \ref{thm:thm3}.

\section{The double Dirichlet series of $n$-th order characters}
\label{sect:DoubleDirichlet}

The goal of this section is to prove Theorems \ref{thm:sub} and \ref{thm:sub1}. %, as well as Corollaries \ref{cor:kor2} -- \ref{cor:kor3}.
We begin by reviewing the work of Friedberg, Hoffstein, and Lieman \cite{FHL}: we give precise definitions of the double Dirichlet series $Z_1$ and $Z_2$ and their completed series $\Xi_1$ and $\Xi_2$, and state the functional equations they satisfy. Following this, we prove approximate functional equations for these series. From here it becomes a relatively simple matter to deduce our main theorems. % and corollaries, which we do in the final sections.

\subsection{The functional equations}

%We now construct the relevant perfect double Dirichlet series, state its functional equations, and give a rigorous discussion of its analytic properties. As discussed in Section \ref{sect:PoissonSummationFormula}, we have the following reciprocity law
%(see \cite[Proposition 1.2]{FHL}):
%there exists a function $C : R_\frakc^2 \to \mu_n$ such that
%$\chi_\A(\B) \chi_\B(\A) = C([\A],[\B])$,
%where $[\mathfrak{d}]$ denotes the class of $\mathfrak{d}$ in $R_\frakc$ (see Section \ref{sect:Intro} for the notation). \\
%If $\A = \A_1 \A_2^n$ with $\A_1$ $n$-th power free, then $\chi_{\A} = \chi_{\A_1}$. If $(\B, \A_1) \neq 1$, we set $\chi_{\A_1}(\B) = 0$.

Recall that we defined
$Z_1(s, w; \psi, \psi')$ in \eqref{eq:defz1}, but did not give a precise definition of its companion series $Z_2$; we do so now. Let $\epsilon(\chi_{\A})$ be the epsilon-factor (at the point $s=1/2$) in the functional equation of $L(s, \chi_{\A})$, and define
\[
    g(\A, \B) = \prod_{\substack{\pr^{\alpha} \parallel \A  \\  \pr^{\beta} \parallel \B}} g(\pr^{\alpha}, \pr^{\beta})
\]
where
\begin{equation}
\label{eq:gauss}
    g(\pr^{\alpha}, \pr^{\beta})  =
    \begin{cases}
        1, & \beta = 0, \\
        \n\pr^{\beta/2 - 1} (\n\pr - 1), & \alpha \geq \beta, \, \beta \equiv 0\mod{n}, \,
            \beta > 0,\\
        - \n\pr^{\beta/2 - 1}  , & \alpha = \beta-1, \, \beta \equiv 0\mod{n}, \, \beta > 0,\\
        \n\pr^{(\beta-1)/2 }  , & \alpha = \beta-1, \, \beta \not\equiv 0\mod{n}, \, \beta > 0,\\
        0, & \text{otherwise}.
\end{cases}
\end{equation}
With this notation let
\[
    D(w, \A, \psi) = \zeta_{K, S}(nw-n/2+1) \sum_{\B \in \I(S)}
    \frac{\epsilon(\chi_\B) \overline{\chi_{\B}}(\A^*) \psi(\B)g(\A, \B)}{\n\B^w}
\]
with $\A$, $\psi$ as above, $\A^*$ the part of $\A$ coprime to the $n$-th power free part of $\B$, $\zeta_{K, S}$ the Dedekind zeta-function attached to $K$ without Euler factor at primes in $S$, and $\Re w$ sufficiently large. Then we define
\begin{equation}
\label{eq:defz2}
    Z_2(s, w; \psi, \psi') = \sum_{\A \in \I(S)} \frac{D(w, \A, \psi') \psi(\A)}{\n\A^s}.
\end{equation}

We next define the completed series $\Xi_j$ of the double Dirichlet series $Z_j$:
\[
    \Xi_j(s, w; \psi, \psi') := G_j(s, w) Z_j(s, w; \psi, \psi')
\]
where the $G_j$ are local factors at infinity defined by:
\[
    G_1(s, w) :=
        \left(
            \frac{\Gamma(s) \Gamma(w) \Gamma\big(n(s+w-1)\big)}
                {\Gamma(s+w-1) (2\pi)^{3/2} (2 \pi n)^{n(s+w-1)-1/2}}
        \right)^{d/2}
\]
and
\[
    G_2(s, w) :=
        \left(
            \frac{\Gamma(s) \Gamma(s+w-1/2) \Gamma\big(n(w-1/2)\big)}
                {\Gamma(w-1/2) (2\pi)^{2s+1} (2 \pi n)^{n(w-1/2) - 1/2}}
        \right)^{d/2}.
\]
For future reference we note that
\begin{equation}
\label{eq:quotient}
    \frac{G_2(1-s, w+s-1/2)}{G_1(s, w)} =
        \left(\frac{\Gamma(1-s)}{\Gamma(s)(2 \pi)^{3/2-2s}}\right)^{d/2} .
\end{equation}
Having given precise definitions, we now summarize the analytic properties of these series (see \cite[Theorem 1.4 and Section 2]{FHL}).
Let%\marginpar{But we are no longer using the $S_f$ notation!}
\[
  \zeta_K^{S}(s) := \prod_{\pr \in S} \left(1- \frac{1}{\n\pr^s}\right)^{-1}.
\]

\begin{prop}
Let $\psi, \psi'$ be two characters of $R_{\frakc}$. Then the function $Z_1(s, w; \psi, \psi')$ is holomorphic on $\C^2$, and of moderate growth in fixed vertical tubes except for possible (simple) polar divisors at $s=1$, $w=1$, and $s+w = 1+1/n$. Similarly, the function $Z_2(s, w; \psi, \psi')$ is holomorphic on $\C^2$, and of moderate growth in fixed vertical tubes except for possible (simple) polar divisors at $s=1$, $w=1/2+1/n$, and $s+w = 3/2$. Moreover, for $\rho, \rho' \in \widehat{R_{\frakc}}$ there are complex numbers $\alpha_{\psi, \psi', \rho, \rho'}$  and holomorphic functions $\alpha^{2, 1}_{\psi, \rho}(s)$, $\alpha^{1, 2}_{\psi, \rho}(s)$, $\alpha^{2, 2}_{\psi', \rho}(s)$, bounded in fixed vertical strips, such that
\begin{equation}
\label{eq:fe11}
    Z_1(s, w; \psi, \psi') =
        \sum_{\rho, \rho' \in \widehat{R_{\frakc}}}
            \alpha_{\psi, \psi', \rho, \rho'} Z_1(w, s; \rho, \rho');
\end{equation}
\[
\begin{split}
    & \Xi_2\left(1-s, w+s-\frac{1}{2}; \psi, \psi'\right)
    =
    \zeta_K^{S}(ns)
        \sum_{\rho\in \widehat{R_{\frakc}}} \alpha^{2, 1}_{\psi, \rho}(s)
        \Xi_1(s, w; \overline{\psi}, \psi\psi'\rho);
\end{split}
\]
\begin{equation}
\label{eq:fe12}
\begin{split}
    & \Xi_1\left(1-s, w+s-\frac{1}{2}; \psi, \psi'\right)
    =
    \zeta_K^{S}(ns)
        \sum_{\rho\in \widehat{R_{\frakc}}} \alpha^{1, 2}_{\psi, \rho}(s)
        \Xi_2(s, w; \overline{\psi}, \psi\psi'\rho);
\end{split}
\end{equation}
\[
\begin{split}
    & \Xi_2\left(s+w-\frac{1}{2}, 1-w; \psi, \psi'\right)
    =
    \zeta_K^{S}\left(nw+1-\frac{n}{2}\right)
        \sum_{\rho\in \widehat{R_{\frakc}}} \alpha^{2, 2}_{\psi', \rho}(s)
        \Xi_2(s, w; \psi\psi'\rho,  \overline{\psi'}) .
    \end{split}
\]
Iterating these functional equations, one can find holomorphic functions
$\beta^{(1)}_{\psi, \psi', \rho, \rho'}(s, w)$
and
$\beta^{(2)}_{\psi, \psi', \rho, \rho'}(s, w)$,
bounded in vertical tubes, satisfying
\begin{equation}
\label{eq:fe1}
\begin{split}
    \Xi_1(s, w; \psi, \psi')
    =
    \zeta_K^{S}(n(1-s)) & \zeta_K^{S}(n(1-w)) \zeta_K^{S}(n(1+1/n-s-w)) \\
    & \times
        \sum_{\rho, \rho' \in \widehat{R_{\frakc}}} \beta^{(1)}_{\psi, \psi', \rho, \rho'}(s, w)
            \Xi_1(1-s, 1-w; \rho, \rho').
\end{split}
\end{equation}
\end{prop}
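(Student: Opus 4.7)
The plan is to follow the strategy of Friedberg--Hoffstein--Lieman \cite{FHL}, by first establishing three ``generating'' functional equations—one from reciprocity, one from the functional equation of $L(s,\chi_\A)$, and one metaplectic—then obtaining analytic continuation to $\C^2$ via Bochner's tube theorem, and finally iterating the three generators to deduce \eqref{eq:fe1}. I work throughout in the region of absolute convergence until analytic continuation is justified.

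In that region, $Z_1(s,w;\psi,\psi')$ unfolds essentially as a double sum $\sum_{\A,\B}\chi_\A(\B)\psi(\B)\psi'(\A)\n\A^{-w}\n\B^{-s}$, the correction factor $\frak{A}(s,\psi,\A)$ being exactly what is needed to supply the missing Euler factors at $S\cup S_\A$ and the ramified primes so that the $\B$-sum becomes a clean $L$-value. The reciprocity law of Lemma \ref{lemma21} expresses $\chi_\A(\B)\overline{\chi_\B(\A)}$ as a function of the classes $[\A],[\B]\in R_\frakc$; Fourier expansion of this function on $R_\frakc\times R_\frakc$ interchanges the roles of $\A$ and $\B$ up to a finite linear combination of shifts by characters $\rho,\rho'\in\widehat{R_\frakc}$, yielding \eqref{eq:fe11} with explicit constants $\alpha_{\psi,\psi',\rho,\rho'}$.

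For the $\Xi_1\leftrightarrow\Xi_2$ functional equations, I would apply the completed functional equation of $L(s,\chi_\A)$ (a refinement of Lemma \ref{poisson}) to each $L$-value in \eqref{eq:defL*}, after factoring $\chi_\A$ through its primitive part via \eqref{decompos}. This step produces the root number $\epsilon(\chi_\A)$, replaces $\chi_\A$ by $\overline{\chi_\A}$, and shifts $s\mapsto 1-s$. Reassembling the resulting double sum, the inner $\B$-series matches the definition of $D(w,\A,\psi)$ in \eqref{eq:defz2}, with the factor $\zeta_{K,S}(nw-n/2+1)$ arising from the Euler factors at $S$; the Gauss-sum coefficients $g(\A,\B)$ of \eqref{eq:gauss} are precisely what appears when conductors overlap. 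The archimedean factors line up with the quotient \eqref{eq:quotient}, and the class-group bookkeeping forced by the use of primitive characters produces the sum over $\rho\in\widehat{R_\frakc}$ and the conjugation $\psi\mapsto\overline\psi$.

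The main obstacle is the third functional equation $w\leftrightarrow 1-w$ of $\Xi_2$. The series $D(w,\A,\psi)$ is not a standard $L$-function: unfolding the Gauss factors $g(\pr^\alpha,\pr^\beta)$ and the global root number $\epsilon(\chi_\B)$, one identifies it, up to gamma factors and the $\zeta_{K,S}(nw-n/2+1)$ correction, with an $\A$-th Fourier--Whittaker coefficient of a theta-type Eisenstein series on the $n$-fold metaplectic cover of $GL(2)$ over $K$ in the sense of Kazhdan--Patterson \cite{KP}. The functional equation of that Eisenstein series in the spectral parameter supplies the functional equation of $\Xi_2$; the $(n-1)d/2$ gamma factors in $G_2$ correspond to the $(n-1)$ archimedean cover factors at each complex place, matching the quotient structure of $G_2$. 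The bookkeeping of class-group characters again manifests as a finite sum over $\rho\in\widehat{R_\frakc}$ together with the swap $\psi'\leftrightarrow\overline{\psi'}$.

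With the three generators in place, analytic continuation of $Z_1$ and $Z_2$ to $\C^2$ follows by Bochner's tube theorem: each generator continues the series into a translated tube, and the union of these tubes together with the original convergence region has convex hull equal to $\C^2$. The possible polar divisors at $s=1$, $w=1$, $s+w=1+1/n$ for $Z_1$, and at $s=1$, $w=1/2+1/n$, $s+w=3/2$ for $Z_2$, arise from the poles of the $\zeta_K^S$-factors in the right-hand sides of the $\Xi_1\leftrightarrow\Xi_2$ and $\Xi_2\leftrightarrow\Xi_2$ identities, pulled back through the functional equations; simplicity and moderate growth in vertical tubes are inherited at each step. Finally, \eqref{eq:fe1} is obtained by composing three generators in the sequence $\Xi_1\to\Xi_2\to\Xi_2\to\Xi_1$, which realizes the Weyl element $(s,w)\mapsto(1-s,1-w)$; the three $\zeta_K^S$ correction factors on the right-hand side of \eqref{eq:fe1} are exactly the three zeta factors produced by the three intermediate applications, and the functions $\beta^{(j)}_{\psi,\psi',\rho,\rho'}$ are bounded holomorphic combinations of the $\alpha$-constants and $\alpha^{i,j}$-functions propagated along the composition.
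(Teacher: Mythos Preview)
The paper does not give its own proof of this proposition: it is stated as a summary of the analytic properties established in \cite[Theorem 1.4 and Section 2]{FHL}, with no argument beyond that citation. Your sketch is therefore not being compared against any proof in the paper, and what you have written is essentially a faithful outline of the Friedberg--Hoffstein--Lieman approach (reciprocity for \eqref{eq:fe11}, functional equation of $L(s,\chi_\A)$ for the $\Xi_1\leftrightarrow\Xi_2$ relations, the Kazhdan--Patterson metaplectic input for the $\Xi_2\leftrightarrow\Xi_2$ relation, Bochner's tube theorem for continuation, and composition of generators for \eqref{eq:fe1}); for the purposes of this paper, simply citing \cite{FHL} is what is done.
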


\subsection{Approximate functional equations}

In this section we apply the functional equations to obtain explicit expressions for the quantity $Z_1(s, w;\psi, \psi')$ on the plane $\Re s = \Re w = 1/2$. For $u, t \in \R$ let
\begin{equation}
\label{eq:defC}
    C_1 := \big((1+|u|)(1+|u+t|)^{n-1}\big)^{d}.
\end{equation}

\begin{lem}
Let $u, t \in \R$. There exist smooth functions $V^{\pm}: [0, \infty) \rightarrow \C$ depending on $u$ and $t$ satisfying
\begin{equation}
\label{eq:boundV}
    y^j \frac{d^j}{dy^j} V^{\pm}(y) \ll_{j, A} (1+y)^{-A}
\end{equation}
for all $j, A \in \N_0$ uniformly in $u$ and $t$, such that
\[
\begin{split}
    Z_1(1/2+it, 1/2+iu; \psi, \psi')
    \ll
    & \sum_{\rho, \rho' \in \widehat{R_{\frakc}}}
        \sum_{\pm}
            \Bigl|
                \sum_{\A} \frac{L^*(1/2 \pm it, \rho, \A) \rho'(\A)}{\n\A^{1/2\pm iu}} V^{\pm}\left(\frac{\n\A}{\sqrt{C_1}}\right)
            \Bigr|.
\end{split}
\]
\end{lem}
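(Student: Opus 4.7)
The plan is to derive the stated approximate functional equation by a Mellin--Barnes contour shift that invokes the $(s,w)\leftrightarrow(1-s,1-w)$ functional equation \eqref{eq:fe1} of $\Xi_1$. Set $s_0 = 1/2+it$ and $w_0 = 1/2+iu$, and pick an even entire test function $G(v)$ of rapid decay on vertical lines with $G(0)=1$, e.g.\ $G(v)=e^{v^2}$. For a parameter $X>0$ to be chosen, consider
\[
I = \frac{1}{2\pi i}\int_{(\sigma)} Z_1(s_0, w_0+v;\psi,\psi')\,G(v)\,X^{v}\,\frac{dv}{v},
\]
with $\sigma$ large enough for absolute convergence of $Z_1$. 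Expanding \eqref{eq:defz1} term-by-term gives
\[
I = \sum_{\A\in\I(S)} \frac{L^{\ast}(s_0,\psi,\A)\psi'(\A)}{\n\A^{w_0}}\, V^{+}\!\left(\frac{\n\A}{X}\right), \qquad V^{+}(y) := \frac{1}{2\pi i}\int_{(\sigma)} G(v)\,y^{-v}\,\frac{dv}{v},
\]
and $V^{+}$ satisfies \eqref{eq:boundV} by shifting the $v$-contour appropriately.

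Next I would shift the contour from $\Re v = \sigma$ to $\Re v = -\sigma$, picking up the simple pole at $v=0$, whose residue is exactly $Z_1(s_0,w_0;\psi,\psi')$, together with the at most three polar lines of $Z_1$ crossed along the way (those at $w_0+v = 1$ and $s_0+w_0+v = 1+1/n$). Since $s_0, w_0$ lie on the critical line, these residues contribute a total of size $\ll X^{1/2+\varepsilon}(1+|t|+|u|)^{O(1)}$, which is harmless once we set $X=\sqrt{C_1}$ at the end. On the new line $\Re v = -\sigma$, invoke \eqref{eq:fe1} to write
\[
Z_1(s_0, w_0+v;\psi,\psi') = \sum_{\rho,\rho'\in\widehat{R_{\frakc}}} H_{\rho,\rho'}(v)\, Z_1\bigl(1-s_0,\,1-w_0-v;\,\rho,\rho'\bigr),
\]
where $H_{\rho,\rho'}(v)$ bundles the ratio $G_1(1-s_0,1-w_0-v)/G_1(s_0,w_0+v)$, the three $\zeta_K^{S}$-factors of \eqref{eq:fe1}, and $\beta^{(1)}_{\psi,\psi',\rho,\rho'}(s_0,w_0+v)$.

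By Stirling's formula applied to the three gamma factors defining $G_1$, one checks that $|H_{\rho,\rho'}(v)| \ll C_1^{-\Re v}(1+|\Im v|)^{A}$ in a fixed vertical strip, the polynomial loss in $|\Im v|$ being absorbed by the rapid decay of $G(v)$. Expanding $Z_1(1-s_0,1-w_0-v;\rho,\rho')$ as an absolutely convergent Dirichlet series on $\Re v = -\sigma$ and reversing sum and integral produces, after taking absolute values, a contribution of the shape
\[
\sum_{\rho,\rho'}\Bigl|\sum_{\A\in\I(S)} \frac{L^{\ast}(1-s_0,\rho,\A)\rho'(\A)}{\n\A^{1-w_0}}\, V^{-}\!\left(\frac{\n\A\,C_1}{X}\right)\Bigr|,
\]
where $V^{-}$ satisfies \eqref{eq:boundV} uniformly in $\rho,\rho'$ (by taking a max over the finite set $\widehat{R_{\frakc}}$ inside the definition of the inverse Mellin transform). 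Choosing $X=\sqrt{C_1}$ aligns both cutoffs at $\sqrt{C_1}$; since $1-s_0 = 1/2-it$ and $1-w_0 = 1/2-iu$, this is exactly the ``$-$'' branch of the asserted bound, while the main term of Step~1 yields the ``$+$'' branch. Combining the two with the negligible polar contributions proves the claim.

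The principal technical point is verifying the bound on $H_{\rho,\rho'}(v)$: one must apply Stirling to each of $\Gamma(s_0)$, $\Gamma(w_0+v)$, and the ``hybrid'' factor $\Gamma(n(s_0+w_0+v-1))/\Gamma(s_0+w_0+v-1)$ and check that together (in view of \eqref{eq:quotient} and its analogue for $G_1(1-s,1-w)/G_1(s,w)$) they produce a factor of size $(1+|u|)^{d}(1+|u+t|)^{(n-1)d} = C_1$ per unit of $-\Re v$, so that the length $\sqrt{C_1}$ in \eqref{eq:defC} emerges as the natural balance point. The bookkeeping of the finitely many residues crossed in the contour shift and of the finite sum over $\rho,\rho'\in \widehat{R_{\frakc}}$ is routine.
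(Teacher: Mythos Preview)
Your overall strategy---contour shift in $v$, pick up the pole at $v=0$, apply the functional equation \eqref{eq:fe1} on the reflected line, and balance the two pieces by choosing the cutoff $X=\sqrt{C_1}$---is exactly the paper's. The substantive gap is in your treatment of the extra poles of $Z_1(s_0,w_0+v;\psi,\psi')$ that you cross when moving from $\Re v=\sigma$ to $\Re v=-\sigma$, namely at $v=1/2-iu$ (from $w=1$) and $v=1/n-i(u+t)$ (from $s+w=1+1/n$). You assert these are ``harmless'' and of size $\ll X^{1/2+\varepsilon}(1+|t|+|u|)^{O(1)}$, but this is not enough: with $X=\sqrt{C_1}$ and, say, $|u|\ll 1$, the contribution is of order $C_1^{1/4}\cdot|\mathrm{Res}_{w=1}Z_1(1/2+it,w)|$, and that residue is essentially an $L$-value on the critical line, so the whole thing can be as large as the convexity bound $C^{1/4}$. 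Since the lemma is the input to a \emph{sub}convexity argument, such a term cannot simply be discarded, nor does it fit into the Dirichlet-series shape of the right-hand side.

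The paper handles this point differently: instead of a generic weight like $e^{v^2}$, it builds an even test function $H_{t,u}(w)$ with prescribed zeros exactly at the polar locations $\{\pm 1/2 - iu,\ \pm 1/n - i(u+t)\}$ (via factors $P_z(w)=(1-2^{z-w})(1-2^{z+w})(1-2^{z})^{-2}$), multiplied by a factor $(\cos(\pi w/3A))^{-3And}$ that supplies enough exponential decay to beat the growth $\exp(\tfrac{\pi nd}{4}|w|)$ of the $G_1$-ratio. With this choice the integrand is genuinely holomorphic except at $w=0$, so no spurious residues appear and the lemma comes out cleanly. You should adopt this device (or some equivalent way of killing the poles) rather than trying to bound the residues after the fact.

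One smaller slip: in your dual term the argument of $V^{-}$ should be $\n\A\, X/C_1$, not $\n\A\, C_1/X$; with $X=\sqrt{C_1}$ the correct expression is $V^{-}(\n\A/\sqrt{C_1})$, matching the lemma, whereas your formula would give $V^{-}(\n\A\sqrt{C_1})$ and hence essentially no sum at all.
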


\noindent\textbf{Proof.}
For fixed $z \in \C$ with $\Re z \neq 0$ let
\[
    P_z(w) := \left(1-  2^{z-w}\right)\left(1- 2^{z+w}\right) \left(1 -2^z\right)^{-2}.
\]
This is an even holomorphic function, bounded in fixed vertical strips, with a zero at $w=z$ and  $P_z(0) = 1$. Let
$\mathfrak{z} := \left\{\pm 1/2- iu,  \pm 1/n - i(u+t) \right\}$.
For a large positive integer $A$ we define
\[
    H_{t, u}(w) :=
        \left(\cos\frac{\pi w}{3A}\right)^{-3And}
            \prod_{z \in \mathfrak{z}} P_z(w).
\]
This is an even function, satisfying $H_{t, u}(0) = 1$ with simple zeros at all $w \in \mathfrak{z}$. Moreover, $H_{t, u}$ is holomorphic in $| \Re w | < 3A/2$. After these preparations we write
\begin{multline}
\label{eq:residue}
    \Xi_1(1/2 + it, 1/2 + iu; \psi, \psi')
    = \frac{1}{2\pi i} \int_{(1)}
            \Xi_1(1/2 + it, 1/2 + iu+w; \psi, \psi') H_{t, u}(w)  \frac{dw}{w} \\
    - \frac{1}{2\pi i} \int_{(-1)}
            \Xi_1(1/2 + it, 1/2 + iu+w; \psi, \psi') H_{t, u}(w)  \frac{dw}{w}.
\end{multline}
The rapid decay of $H_{t, u}$ ensures the absolute convergence of the two integrals and that the integrand is holomorphic except at $w=0$, see \cite[Theorem 1.4]{FHL}.
Moreover, the Dirichlet series $Z_1(1/2 + \pm it, w; \psi, \psi')$ is absolutely convergent in
$\Re w > 1$. For the last term in \eqref{eq:residue} we apply the functional equation \eqref{eq:fe1}, change variables $w \mapsto -w$ and open both Dirichlet series. This gives the lemma with
\[
    V^{\pm}(y) =
        \frac{1}{2\pi i} \int_{(1)}
            \frac{G_1(1/2 \pm it, 1/2 \pm iu + w)}{G_1(1/2 + it, 1/2 + iu)}  H_{t, u}(w)  (\sqrt{C_1}y)^{-w} \frac{dw}{w}.
\]
It remains to establish \eqref{eq:boundV}. It is an easy consequence of Stirling's formula
(see e.g.\ \cite[p.\ 100]{IK}) that
\[
    \frac{\Gamma(s+z)}{\Gamma(s)} \ll_{\Re s, \Re z}
        (1 +|s|)^{\Re z} \exp\left(\frac{\pi}{2} |z|\right), \quad \Re s, \Re (s+z) \geq 1/100.
\]
We conclude that
\[
    \frac{G_1(1/2 \pm it, 1/2 \pm iu + w)}{G_1(1/2 + it, 1/2 + iu)} \ll
        C_1^{\Re w/2} \exp\left(\frac{\pi n d}{4}|w|\right)
\]
with $C_1$ as in \eqref{eq:defC}. Shifting the contour and differentiating under the integral sign we find
\[
    y^j\frac{d^j}{dy^j} V^{\pm}(y) \ll_{\sigma, j}
        y^{-\sigma} \int_{(\sigma)}
            \exp\left(\pi \Bigl(\frac{nd}{4} - nd\right) |w|\Bigr)(1+|w|)^{ j} \, \frac{dw}{w} + \delta_{j = 0, \sigma < 0}
\]
for any $-1/4 \leq \sigma < 3A/2$, $\sigma \not= 0$. The second term on the right hand side comes from the pole at $w = 0$ if $j = 0$. The $w$-integral is rapidly converging. For $y \leq 1$ we choose $\sigma = -1/8$, say, for $y > 1$ we choose $\sigma = A$ and arrive at \eqref{eq:boundV}.
\qed \\

Applying a smooth partition of unity, we conclude
\begin{equation}
\label{eq:max}
    Z_1(1/2+it, 1/2+iu; \psi, \psi') \ll
        C_1^{\varepsilon}
        \max_{\rho, \rho' \in \widehat{R_{\frakc}}}
        \left(
            \max_{1 \leq P \leq C_1^{1/2 +\varepsilon}}
                |D^{\pm}_{\rho, \rho'}(t, u, P)|
        \right)
\end{equation}
where
\[
    D^{\pm}_{\psi, \psi'}(t, u, P) :=
        \sum_{\A} \frac{L^*(1/2 \pm it, \psi, \A)  \psi'(\A)}{\n\A^{1/2\pm iu}}
            W\Big(\frac{\n\A}{P}\Big)
\]
for a certain function $W$ with support in $[1, 2]$ depending on $t, u, P$, all of whose derivatives are  bounded uniformly in $t, u, P$. By Mellin inversion we have
\[
    D^{\pm}_{\psi, \psi'}(t, u, P) =
        \frac{1}{2\pi i} \int_{(3)}
            Z_1(1/2 \pm it, 1/2 \pm iu + w, \psi, \psi') \widehat{W}(w) P^w  dw
\]
where $\widehat{W}$ is holomorphic and rapidly decaying in vertical strips. Fix an even holomorphic function $H_t$ with $H_t(0) = 1$ and $H_t(1/2 \pm it) = 0$ which decays rapidly in vertical strips, and let $R>0$ be a parameter.  By Cauchy's theorem we have
\begin{equation}
\label{eq:decompD}
    D^{\pm}_{\psi, \psi'}(t, u, P)  = D_1 + D_2
\end{equation}
where
\[
\begin{split}
    D_1  &=
        \frac{1}{(2\pi i)^2} \int_{(1)} \int_{(3)} Z_1(1/2 \pm it + s, 1/2 \pm iu + w, \psi, \psi') \widehat{W}(w) P^w \frac{H_t(s)R^s}{s} dw \,ds, \\
    D_2 & =
        \frac{1}{(2\pi i)^2} \int_{(1)} \int_{(3)} Z_1(1/2 \pm it - s, 1/2 \pm iu + w, \psi, \psi') \widehat{W}(w) P^w \frac{H_t(s)R^{-s}}{s} dw \,ds.
\end{split}
\]
Note that for $\Re w = 3$ and $-1 \leq \Re s \leq 1$ the function
$Z_1(1/2 \pm it + s, 1/2 \pm iu + w, \psi, \psi')H_t(s)$
is holomorphic. We now choose
\begin{equation}
\label{eq:defR}
    R := (1+|t|)^{d / 2} \sqrt{P} .
\end{equation}

The following two lemmas serve as templates of how to bring sums of central values of $L$-functions into a form where Theorem \ref{thm:thm3} can be applied. We recall the definition \eqref{eq:defC0} of $C$. We will use frequently Lemma \ref{lemma21}.

\begin{lem}
\label{lem:ref1}
    For any $\varepsilon > 0$ we have
    $D_1 \ll C^{\varepsilon}\big(P^{1/2} + R^{1/2} + (PR)^{1/3}\big)$.
\end{lem}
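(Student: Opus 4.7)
The plan is to reduce $D_1$ to a bilinear form in $\A$ and $\B$ to which the large sieve of Theorem \ref{thm:thm3} applies, via Cauchy--Schwarz. At $\Re s = 1$, $\Re w = 3$ all relevant Dirichlet series converge absolutely, so I unfold $Z_1$ into a double sum over ideals and obtain
\[
D_1 = \sum_{\A,\B \in \I(S)}
\frac{\psi'(\A)\,\psi(\B)\,\chi_\A(\B)}{\n\A^{1/2\pm iu}\,\n\B^{1/2\pm it}}\, K(\A,\B),
\]
where $K(\A,\B)$ is the product of the correction factor $\frak{A}$ (bounded by $\n\A^{\varepsilon}$ via \eqref{eq:defa}) and the two contour integrals in $w$ and $s$. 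The condition $\B \in \I(S \cup S_\A)$ is automatic, since $\chi_\A$ is primitive and so vanishes on ideals sharing a prime with its conductor.

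Next I analyze the two contour integrals. The $w$-integral $\int_{(3)} \widehat{W}(w)(P/\n\A)^w\, dw$ is, by the rapid decay of $\widehat{W}$, essentially a smooth bump supported on $\n\A \asymp P$. The $s$-integral against $H_t(s)(R/\n\B)^s/s$ is analyzed by contour shifting: shifting to the right gives a negligible contribution when $\n\B \gg R^{1+\varepsilon}$, while shifting to the left past the simple pole at $s=0$ (with residue $H_t(0) = 1$) shows it is $O(1)$ when $\n\B \leq R$. After a dyadic decomposition $\n\B \sim N$ with $N \leq R^{1+\varepsilon}$ (all other ranges contributing negligibly), the estimation of $D_1$ reduces to bounding, for each such $N$,
\[
T_N := \sum_{\substack{\A\in \I(S)\\ \n\A \sim P}} \sum_{\substack{\B\in\I(S)\\ \n\B \sim N}} a_\A\, b_\B\, \chi_\A(\B),
\qquad
|a_\A| \ll \n\A^{-1/2+\varepsilon},\ \
|b_\B| \ll \n\B^{-1/2}.
\]

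To invoke Theorem \ref{thm:thm3} I must restrict both $\A$ and $\B$ to squarefree ideals. Using $\chi_{\A\mathfrak{d}^n} = \chi_\A$ from Lemma \ref{lemma21}, I may assume $\A$ is $n$-th power free, and then decompose $\A = \A_1 \A_2^2 \cdots \A_{n-1}^{n-1}$ into pairwise coprime squarefree factors; after a dyadic decomposition of each $\A_i$, a Cauchy--Schwarz maneuver in the style of Lemma \ref{lem:B2-B1} isolates a single squarefree factor on which the large sieve can act. The same device handles the $\B$-sum, reducing $T_N$ to a bilinear form over squarefree variables at the cost of $(PN)^\varepsilon$. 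Cauchy--Schwarz in the outer variable followed by Theorem \ref{thm:thm3} then yields
\[
|T_N|^2 \ll (PN)^\varepsilon \bigl(P + N + (PN)^{2/3}\bigr) \Big(\sum_{\A} |a_\A|^2\Big) \Big(\sum_{\B} |b_\B|^2\Big) \ll (PN)^\varepsilon \bigl(P + N + (PN)^{2/3}\bigr),
\]
since each $L^2$-norm is $O((PN)^\varepsilon)$. Taking square roots, maximizing over $N \leq R^{1+\varepsilon}$, and using $(PR^{1+\varepsilon})^{1/3} \ll C^\varepsilon (PR)^{1/3}$ yields the claimed bound. The main technical obstacle is the squarefree reduction, but since only an $\varepsilon$-loss is needed, the standard ideal decomposition together with Cauchy--Schwarz comfortably suffices.
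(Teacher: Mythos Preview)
Your overall strategy is correct and matches the paper's: open $Z_1$, truncate the sums using the rapid decay of $W$ and $H_t$, shift contours, and bound the resulting bilinear form by Cauchy--Schwarz and Theorem \ref{thm:thm3}. The problem lies in the reduction to squarefree variables, which you handle too loosely.

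A minor point first: the bound $\frak{A}\ll \n\A^\varepsilon$ holds only for $\Re s \geq 1-1/n$. After you shift the $s$-contour to the line $\Re s = \varepsilon$ (so the argument of $\frak{A}$ is $1/2\pm it+\varepsilon$), \eqref{eq:defa} gives $\frak{A}\ll \n\A_2^{n/2-1+\varepsilon}$ where $\A_2^n$ is the $n$-th power part of $\A$; this factor must be carried through.

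More seriously, the ``Cauchy--Schwarz maneuver in the style of Lemma \ref{lem:B2-B1}'' does \emph{not} cost only $\varepsilon$. As the proof of that lemma shows, isolating a single squarefree factor $\A_\ell$ leaves an outer sum over the remaining $\A_i$ of size $X/X_\ell$, which can be as large as $M^{1/3}$. Feeding Theorem \ref{thm:thm3} through that route gives (cf.\ the display in the proof of Lemma \ref{rec}) $B_2(P,N)\ll (PN)^\varepsilon(P+P^{1/3}N)$ rather than $P+N+(PN)^{2/3}$; after Cauchy--Schwarz this produces $P^{1/2}+P^{1/6}R^{1/2}$, strictly weaker than the target $P^{1/2}+R^{1/2}+(PR)^{1/3}$ whenever $R>P$. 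Applying the same device to $\B$ would make things worse still.

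The paper instead writes $\A=\A_0\A_1\A_2^n$ with $\A_0$ squarefree, $\A_1$ squarefull and $n$-th power free, $(\A_0,\A_1)=1$, and $\B=\B_0\B_1$ with $\B_0$ squarefree, $\B_1$ squarefull. The outer sums over $\A_1,\A_2,\B_1$ then carry weights $(\n\A_1)^{-1/2}$, $\n\A_2^{-1+\varepsilon}$, $(\n\B_1)^{-1/2}$ against \emph{sparse} (squarefull) sets, so they contribute only $C^\varepsilon$. This is what makes the ``$\varepsilon$-loss'' claim true. One also splits $\A_0,\B_0$ into classes in $R_{\frakc}$ and uses the reciprocity in Lemma \ref{lemma21} to convert the cross terms $\chi_{\A_0}(\B_1)$, $\chi_{\A_1}(\B_0)$ into coefficients depending on a single variable, so that Theorem \ref{thm:thm3} applies cleanly to the inner $(\A_0,\B_0)$-sum; you omit this step.
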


\noindent \textbf{Proof.}
By \eqref{eq:defz1} and \eqref{eq:defL*} we have
\[
    D_1 =
        \sum_{\substack{\A, \B \in \I(S)  }}
            \frac{\psi'(\A) \psi(\B) \chi_{\A}(\B)}{\n\B^{1/2 \pm it} \n\A^{1/2\pm iu}} W\Big(\frac{\n\A}{P}\Big) V\Big(\frac{\n\B}{R}\Big)
\]
where
\[
    V(x)
    =
    V^{\pm}_{t, \psi, \A}(x)
    =
    \frac{1}{2\pi i} \int_{(1)} H_t(s) \frak{A}(1/2 \pm it + s, \psi, \A) x^{-s} \frac{ds}{s}
\]
and $\frak{A}$ is defined in \eqref{eq:defa}.
By the rapid decay of $V$ and $W$ we can truncate the double sum at
$\n\A \leq P' := P C^{\varepsilon}$ and $\n\B \leq R' := RC^{\varepsilon}$ at the cost of a negligible error. Having done so, we shift the contour of the integral defining $V$ to the line
$\Re s = \varepsilon$ and truncate it at $|\Im s| \leq C^{\varepsilon}$ at the cost of a negligible error because of the rapid decay of $H_t$. Similarly we shift the contour of the integral defining $W$ to the line $\Re w = 0$ and truncate at $|\Im w| \leq C^{\varepsilon}$.  Hence
\[
    D_1 \ll
        C^{3\varepsilon}  \sup_{|y_1|, |y_2| \leq C^{\varepsilon}}
        \Bigl|
            \sum_{\substack{\n\A \leq P', \n\B \leq R'  \\  \A, \B \in \I(S)}}
                \frac{ \psi'(\A)\frak{A}(1/2 + \varepsilon \pm it + iy_2, \psi, \A) \psi(\B)    \chi_{\A}(\B)}{\n\A^{1/2\pm it + iy_1 } \n\B^{1/2+\varepsilon \pm iu +iy_2}}
        \Bigr|
        + C^{-A}.
\]
We decompose uniquely $\A = \A_0\A_1\A_2^n$ where $\A_0 \A_1$ is $n$-th power free, $\A_0$ is squarefree, $\A_1$ is squarefull and $(\A_0, \A_1) = 1$.  Similarly we decompose $\B = \B_0 \B_1$ where $\B_0$ is squarefree, $\B_1$ is squarefull and $(\B_0, \B_1) = 1$. Then the sum over $\A$ and $\B$ is bounded by (cf.\ \eqref{eq:defa})
\begin{equation}
\label{eq:sum}
    \sum_{ \n\A_1\A_2^n \leq P'} \frac{\n\A_2^{n/2 - 1 + \varepsilon}}{(\n\A_1\A_2^n)^{1/2}}
        \sum_{\n\B_1 \leq R'} \frac{1}{\n\B_1^{1/2}}
        \Bigl|
            \sum_{\substack{\n\A_0 \leq \frac{P'}{\n\A_1\A_2^n}  \\  (\A_0, \A_1) = 1}} \sum_{\substack{\n\B_0 \leq \frac{R'}{\n\B_1}  \\  (\B_0, \B_1) = 1}}
                \frac{ \psi'(\A_0) \psi(\B_0) \chi_{\A_0\A_1}(\B_0\B_1)}{\n\A_0^{1/2\pm it + iy_1 } \n\B_0^{1/2+\varepsilon \pm iu +iy_2}}
        \Bigr|.
\end{equation}
Here it is understood that $\B_0, \A_0$ are squarefree, $\B_1, \A_1$ are squarefull, $\A_1$ is $n$-th power free, and all ideals are coprime to $S$. We split the inner double sum over $\A_0$ and $\B_0$ into finitely many subsums according to their image in $R_{\frakc}$. By the reciprocity law stated in Lemma \ref{lemma21} we see that this double sum is bounded by
\[
    \ll\max_{\mathcal{C}_1, \mathcal{C}_2 \in R_{\frakc}}
        \Bigl|
            \sum_{\substack{\n\A_0 \leq \frac{P'}{\n\A_1\A_2^n}  \\  (\A_0, \A_1) = 1, [\A_0 ]= \mathcal{C}_1}}
            \sum_{\substack{\n\B_0 \leq \frac{R'}{\n\B_1}  \\  (\B_0, \B_1) = 1, [\B_0] = \mathcal{C}_2}}
                \frac{\chi_{\A_0}(\B_0)\chi_{\A_1}(\B_0)\chi_{\A_0}(\B_1)}{\n\A_0^{1/2\pm it + iy_1 } \n\B_0^{1/2+\varepsilon \pm iu +iy_2}}
        \Bigr|,
\]
and by Cauchy-Schwarz the double sum is at most
\[
    \ll (\log P')
    \Bigl(
        \sum_{\A_0}
        \Bigl|
            \sum_{\B_0} \frac{\chi_{\A_0}(\B_0) \chi_{\A_1}(\B_0) }{\n\B_0^{1/2+\varepsilon \pm iu +iy_2}}
        \Bigr|^2
    \Bigr)^{1/2}
\]
with the same summation conditions as in the previous display.  Applying Theorem \ref{thm:thm3} and substituting back into \eqref{eq:sum}, we obtain the lemma.
\qed \\

\begin{lem}
\label{lem:ref2}
    For any $\varepsilon > 0$ we have
    $D_2 \ll  C^{\varepsilon} \big(P^{1/2} + R^{1/2} + (PR)^{1/3}\big)$.
\end{lem}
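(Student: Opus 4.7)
The key difference between Lemmas \ref{lem:ref1} and \ref{lem:ref2} is that the inner $Z_1$ in $D_2$ is evaluated at $(1/2\pm it - s,\,1/2\pm iu + w)$ with $\Re s = 1$, so its first argument has real part $-1/2$, outside the region of absolute convergence. The plan is first to apply the functional equation \eqref{eq:fe12} to convert $Z_1$ into a sum of $Z_2$'s on a convergent line, then open $Z_2$ via \eqref{eq:defz2} (which is where the Gau\ss{} sums $\epsilon(\chi_\B)$ and the factor $g(\A,\B)$ enter), and finally reduce to Theorem \ref{thm:thm3} exactly as in Lemma \ref{lem:ref1}.

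Substituting $s\mapsto 1/2\mp it+s$ in \eqref{eq:fe12} transforms $Z_1(1/2\pm it-s,\,1/2\pm iu+w;\psi,\psi')$ into a finite sum over $\rho\in\widehat{R_\frakc}$ of $Z_2(1/2\mp it+s,\,1/2\pm i(u\pm t)+w-s;\overline\psi,\psi\psi'\rho)$, multiplied by $\zeta_K^S(n(1/2\mp it+s))\alpha^{1,2}_{\psi,\rho}(1/2\mp it+s)$ and the archimedean ratio from \eqref{eq:quotient}, which by Stirling on $\Re s=1$ has modulus $\asymp (1+|t|)^{d\Re s}$. Crucially, the definition $R=(1+|t|)^{d/2}\sqrt P$ in \eqref{eq:defR} was chosen so that $R^{-s}(1+|t|)^{d\Re s}=((1+|t|)^{d/2}/\sqrt P)^{\Re s}$. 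After opening $Z_2$, the combined $(s,w)$-dependence of the integrand becomes proportional to
\[
\left(\frac{(1+|t|)^{d/2}\,\n\B}{\sqrt P\,\n\A}\right)^{\Re s}\left(\frac{P}{\n\B}\right)^{\Re w}
\]
times slowly varying factors. Shifting both contours to $\Re s=\Re w=\varepsilon$ (picking up the pole at $s=0$ and residues of the auxiliary $\zeta_K^S$-factors, which either match zeros built into $H_t$ or contribute lower-order terms handled analogously) yields effective truncations $\n\B\le P':=PC^\varepsilon$ and, using $\n\B\le P'$, $\n\A\le R':=RC^\varepsilon$.

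After these contour shifts and standard truncation of the $s$- and $w$-integrals via rapid decay of $\widehat W$ and $H_t$, $D_2$ is bounded up to negligible error by $C^{O(\varepsilon)}$ times a supremum over $|y_1|,|y_2|\le C^\varepsilon$ of
\[
\Bigl|\sum_{\substack{\A,\B\in\I(S)\\ \n\A\le R',\ \n\B\le P'}}\frac{\epsilon(\chi_\B)\,\overline{\chi_\B}(\A^*)\,g(\A,\B)\,\lambda(\A)\,\mu(\B)}{\n\A^{1/2+iy_1}\,\n\B^{1/2+iy_2}}\Bigr|
\]
with unit-modulus phases $\lambda,\mu$ absorbing $\psi,\psi',\rho$ and the root numbers. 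From here the argument parallels Lemma \ref{lem:ref1}: decompose $\A=\A_0\A_1\A_2^n$, $\B=\B_0\B_1$ into squarefree and squarefull parts, use \eqref{eq:gauss} to see that $g$ is controlled by a power of the squarefull parts while $|\epsilon(\chi_\B)|=1$, sort the squarefree parts into classes in $R_\frakc$, invoke the reciprocity of Lemma \ref{lemma21} to replace $\overline{\chi_{\B_0}}(\A_0)$ by $\overline{\chi_{\A_0}}(\B_0)$ up to a class-dependent constant, and apply Cauchy--Schwarz in $\A_0$. Theorem \ref{thm:thm3} then bounds the inner squared sum by $C^\varepsilon(R'+P'+(R'P')^{2/3})\sum_{\B_0}|\mu(\B_0)|^2$, and summing the absolutely convergent tails over $\A_1,\A_2,\B_1$ gives the claim.

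The main technical point, as opposed to routine bookkeeping, is verifying that the $(1+|t|)^d$ growth of the archimedean quotient from \eqref{eq:quotient} is \emph{exactly} cancelled by \eqref{eq:defR}, so that the dual lengths $R'$ and $P'$ in $D_2$ mirror the lengths $P'$ and $R'$ appearing in $D_1$; once this Stirling-based calculation is carried out, every other ingredient is a direct analogue of the proof of Lemma \ref{lem:ref1}.
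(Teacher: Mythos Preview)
Your high-level strategy is exactly the paper's: apply \eqref{eq:fe12}, use \eqref{eq:quotient} and Stirling to see that the archimedean growth is precisely absorbed by the choice \eqref{eq:defR} of $R$, change variables so that $Z_2$ has arguments $(1/2\mp it+s,\,1/2\pm i(u+t)+w)$, open the Dirichlet series \eqref{eq:defz2}, shift to $\Re s=\varepsilon$, $\Re w=0$ (no pole at $s=0$ is crossed, contrary to what you write), truncate, and reduce to Theorem \ref{thm:thm3}. Your identification of the $R$-cancellation as the main conceptual point is accurate.

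The genuine gap is in the last reduction. The terms $g(\A,\B)$ and $\overline{\chi_\B}(\A^{*})$ do \emph{not} decouple under the squarefree/squarefull splitting $\A=\A_0\A_1\A_2^n$, $\B=\B_0\B_1$ that you import from Lemma \ref{lem:ref1}. By \eqref{eq:gauss}, at a prime $\pr^\beta\parallel\B$ with $\beta\not\equiv 0\ (n)$ the factor $g(\pr^\alpha,\pr^\beta)$ forces $\alpha=\beta-1$, so the $\A$-sum at primes dividing $\B$ is rigidly constrained rather than ``controlled by a power of the squarefull parts''; and $\A^{*}$ is the part of $\A$ coprime to the $n$-th power free part of $\B$, which has no clean description in your variables. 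The paper therefore first decomposes $\B=\B'(\B'')^n$ with $(\B',\B'')=1$ and $\B''$ collecting the primes whose exponent in $\B$ is divisible by $n$, and correspondingly $\A=\tilde\A\,\A'\A''$ with $(\tilde\A,\B)=1$, $\A'\mid(\B')^\infty$, $\A''\mid(\B'')^\infty$. This yields $\chi_\B(\A^{*})=\chi_{\B'}(\tilde\A\A'')$ and $g(\A,\B)=g(\A',\B')g(\A'',(\B'')^n)$, and the entire ramified $\A'$-sum is packaged into a factor $G(\B')\ll 1$. Only \emph{after} this step does the paper apply the squarefree/squarefull decomposition (to $\B'$ and to $\tilde\A$) and then Theorem \ref{thm:thm3}, exactly as in Lemma \ref{lem:ref1}. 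Without this intermediate packaging, your bilinear form does not reduce to one of the shape $\sum_{\A_0}|\sum_{\B_0}\lambda_{\B_0}\chi_{\A_0}(\B_0)|^2$ to which the large sieve applies.
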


\noindent \textbf{Proof.}
By \eqref{eq:fe12} and \eqref{eq:quotient} we have
\begin{multline*}
    D_2 =  \sum_{\rho \in \widehat{R_{\frakc}}} \frac{1}{(2\pi i)^2} \int_{(1)} \int_{(3)}
    c_{t, u}(s, w)  \widehat{W}(w) (1+|t|)^{s d} P^w \frac{H_t(s)R^{-s}}{s} \\
    \times
    Z_2(1/2 \mp it + s, 1/2 \pm i (u+t) + w-s; \overline{\psi}, \psi\psi' \rho) \, dw \, ds .
\end{multline*}
where
\[
\begin{split}
    c_{t, u}(s, w)
    & =
    \alpha_{\psi, \rho}^{1, 2}(1/2 \mp it + s)\zeta_K^{S}(n(1/2 \mp it + s))
    \left(
        \frac{\Gamma(1/2 \mp it + s)}{\Gamma(1/2\pm it - s)
        (2\pi)^{1/2 \mp 2 i t +2s}(1+|t|)^{2s}}
    \right)^{d/2} \\
    & \ll_{\Re s,\, \Re w} ( 1+ |\Im s|)^{d\, \Re s}
\end{split}
\]
in $\Re s , \Re w > 0$, uniformly in $u, t$. Now we insert the definition of $R$ given in \eqref{eq:defR} and change variables, getting
\begin{multline*}
    D_2 =
    \sum_{\rho \in \widehat{R_{\frakc}}} \frac{1}{(2\pi i)^2} \int_{(1)} \int_{(2)}
    c_{t, u}(s, w+s)  \widehat{W}(w+s)  P^w \frac{H_t(s)R^{s}}{s} \\
        \times Z_2(1/2 \mp it + s, 1/2 \pm i (u+t) + w; \overline{\psi}, \psi\psi' \rho) \, dw \, ds.
\end{multline*}
We open the absolutely convergent Dirichlet series and write
\[
    D_2 =
    \sum_{\rho \in \widehat{R}_{\frakc}} \sum_{\A, \B \in \I(S)}
        \frac{\epsilon(\chi_\B) \bar{\chi}_{\B}(\A^{\ast}) \psi\psi'\rho(\B)
        g(\A, \B)\bar{\psi}(\A)}{\n\A^{1/2 \mp it}\n\B^{1/2 \pm i(u+t)}}
        \Omega\Big(\frac{\n\A}{R}, \frac{\n\B}{P}\Big)
\]
where
\[
    \Omega(x, y) =
    \frac{1}{(2 \pi i)^2} \int_{(1)} \int_{(2)}c_{t, u}(s, w+s) \zeta_{K, S} (1  \pm in(u+t) + n s)   \widehat{W}(w+s)  H_t(s) x^{-s} y^{-w} \frac{ dw \, ds}{s}.
\]
With future manipulations in mind, we decompose uniquely $\B = \B'(\B'')^n$   with $(\B', \B'') = 1$ and
\begin{displaymath}
  \B'' = \prod_{\substack{\mathfrak{p}^{\beta} \parallel \B\\ \beta \equiv 0 \, (n)}} \mathfrak{p}^{\beta/n}.
\end{displaymath}
Accordingly we write $\A = \tilde{\A}\A'\A''$ where $(\tilde{\A}, \B) = 1$, $\A' \mid (\B')^{\infty}$ and $\A'' \mid (\B'')^{\infty}$. Shifting contours we see that $\Omega$ is rapidly decaying in both variables, so we can truncate\footnote{We could have truncated more canonically at $\n\A \leq R'$, but the above truncation is more convenient later. Note that $\tilde{\A}$ depends on $\B$, but this is not a problem.} $\n\tilde{\A} \leq R' = RC^{\varepsilon}$, $\n\B\leq P' = PC^{\varepsilon}$
at the cost of a negligible error. Now we shift the contour to $\Re w = 0$, $\Re s = \varepsilon$ and truncate the two contours at $|\Im w|, |\Im s| \leq C^{\varepsilon}$ at the cost of a negligible error. As above we conclude
\[
    D_2 \ll
        C^{3\varepsilon} \sup_{|y_1|, |y_2| \leq C^{\varepsilon}}
        \max_{\rho \in \widehat{R}_{\frakc}}
        \Bigl|
            \sum_{\substack{\B \in \I(S)  \\  \n\B \leq P'}}
            \sum_{\substack{\A  \in \I(S)  \\  \n\tilde{\A} \leq R' }}
            \frac{\epsilon(\chi_\B) \bar{\chi}_{\B}(\A^*) \psi\psi'\rho(\B)
            g(\A, \B)\bar{\psi}(\A)}{\n\A^{1/2+\varepsilon \mp it+i y_1}\n\B^{1/2 \pm i(u+t)+iy_2}}
        \Bigr|
        + C^{-A}.
\]
Our notation (cf.\ Lemma \ref{lemma21}) implies  $\chi_{\B}(\A^*) = \chi_{\B'}(\A^{\ast})= \chi_{\B'}(\tilde{\A}\A'')$ and
$g(\A, \B) = g(\A', \B')g(\A'', (\B'')^n)$. Let
\[
    G(\B') =
    \frac{\epsilon(\chi_\B') \chi_{\B'}(\A'') \psi\psi'\rho(\B')}{(\n\B')^{ \pm i(u+t)+iy_2} }
    \sum_{\A' \mid (\B')^{\infty}}
        \frac{g(\A', \B')\bar{\psi}(\A')}{(\n\A')^{1/2+\varepsilon \mp it+i y_1}}  \ll 1,
\]
cf.\ \eqref{eq:gauss}. Then we can bound the double sum by
\[
    \sum_{\n\B'' \leq R'} \frac{1}{(\n\B'')^{n/2}}
        \sum_{\A'' \mid (\B'')^{\infty}} \frac{g(\A'' , (\B'')^n )}{(\n\A'' )^{1/2}}
        \Bigl|
            \sum_{\substack{\n\B' \leq R'(\n\B'')^{-n}  \\  (\B', \B'') = 1}}
            \frac{G(\B')}{ (\n\B')^{1/2 } }
            \sum_{\substack{\n\tilde{\A} \leq P'  \\  (\tilde{\A}, \B'\B'') = 1}} \frac{\bar{\psi}(\tilde{\A}) \chi_{\B'}(\tilde{\A})}{(\n\tilde{\A})^{1/2+\varepsilon \mp it+i y_1}}
        \Bigr|.
\]
Here all ideals are understood to be coprime to $S$. We now proceed exactly as in the previous lemma: we decompose the inner sum into $\B' = \B'_0\B'_1(\B'_2)^n$ and $\tilde{\A} = \tilde{\A}_0 \tilde{\A}_1$ where $\B'_0, \tilde{\A}_0$ are squarefree, $\B'_1, \tilde{\A}_1$ are squarefull, $\B_0'\B_1'$ is $n$-th power free and  $(\B'_0, \B'_1) = (\tilde{\A}_0, \tilde{\A}_1) = 1$.  We pull the $\B'_1, \B_2', \tilde{\A}_1$-sums outside the absolute values, split the inner sums into finitely many subsums according to image of $\B'_0, \tilde{\A}_0$ in $R_{\frakc}$ and use this to decompose $\chi_{\B'}(\tilde{\A})$. Then we use Theorem \ref{thm:thm3} for the innermost double sum and estimate the rest trivially.
\qed

\subsection{Proofs of Theorems \ref{thm:sub} and \ref{thm:sub1}}

Combining \eqref{eq:defC0}, \eqref{eq:defC}, \eqref{eq:max}, \eqref{eq:decompD}, \eqref{eq:defR}, and Lemmas \ref{lem:ref1} and \ref{lem:ref2} we find
\begin{multline*}
    Z_1(1/2 + it, 1/2 + iu; \psi, \psi')
    \ll
    \max_{P \ll C_1^{1/2+\varepsilon}}
    \left(
        P^{1/2} + R^{1/2} + (PR)^{1/3}
    \right)^{1+\varepsilon} \\
    \ll
    C^{1/4+\varepsilon}
    \left(
        (1+|t|)^{-d/4} + \big((1+|u|)(1+|u+t|^{n-1})\big)^{-d/8} + (1+|t|)^{-d/12}
    \right).
\end{multline*}
By \eqref{eq:fe11} we can assume that $|t| \geq |u|$ and Theorem  \ref{thm:sub1} follows easily. Similarly, we use \eqref{eq:fe11} to conclude that $Z_1(1/2, w; \psi, \psi') \ll \max_{\rho, \rho'} |Z_1(w, 1/2, \rho, \rho')|$ to conclude Theorem \ref{thm:sub}.

\section{Proofs of the Corollaries}

\begin{proof}[Proof of Corollary \ref{cor:kor2}]
This follows the pattern of Lemmas \ref{lem:ref1} and \ref{lem:ref2}, so we can be brief.  With later applications in mind, we will show the following slightly stronger bound. If $\A = \A_1\A_2^n \in \mathcal{I}(S)$ with $\A_1$ $n$-th power free, we will show
\begin{equation}\label{eq:sharper}
    \sum_{\substack{\A = \A_1\A_2^n \in \I(S) \\ \n\A \leq N}} |L(1/2+it, \chi_{\A})|^2 (\n\A_2)^{n-2+\varepsilon}
    \ll
    \left( N (1+|t|)^{d/2}\right)^{1+\varepsilon} .
\end{equation}
Clearly this implies Corollary \ref{cor:kor2}, but it also implies
\begin{equation}\label{eq:neededlater}
  \sum_{\substack{\A \in \mathcal{I}(S)\\ \n\A \leq N}} |L^{\ast}(1/2, \textbf{1}, \chi_{\A})|^2 \ll N^{1+\varepsilon},
\end{equation}
cf.\ \eqref{eq:defL*} and \eqref{eq:defa}.

%The $L$-function in question satisfies a functional equation \cite[(1.10)]{FHL}.
A standard approximate functional equation for the Hecke $L$-function
$L(1/2 + it, \chi_\A)$ as in \cite[Theorem 5.3/Proposition 5.4]{IK} gives
\[
    L(1/2 + it, \chi_\A)
    \ll
    \Bigl|\sum_{\B \subseteq \oh } \frac{\chi_\A(\B)}{\n\B^{1/2 \pm it}}
    V\bigg(\frac{\n\B}{\text{cond}(\chi_\A)^{1/2} (1+|t|)^{d/2}}\bigg)\Bigr|
\]
where $V$ depends on $t$ and is rapidly decaying, uniformly in $t$. Similarly as in Lemma \ref{lem:ref1} and \ref{lem:ref2} we decompose uniquely $\A = \A_0\A_1\A_2^n$ where $\A_0$ is squarefree, $\A_1$ is squarefull, but $n$-th power free, and $(\A_0, \A_1) = 1$;
and $\B = \B_0\B_1\B_2$ where $\B_1$ is squarefree, $\B_2$ is squarefull, $(\B_1, \B_2) = (\B_1\B_2, S) = 1$ and $\B_0$ is only composed of prime ideals in $S$. We note that $\sum_{\B_0, \B_2} (\n\B_0\B_2)^{-1/2-\varepsilon}$ is convergent.  If $\n\A \leq N$, we can cut the $\B$-sum at
\begin{displaymath}
    \n\B_1 \leq B:=  \left(\Bigl(\frac{N}{\n\A_2^n}\Bigr)^{1/2}(1+|t|)^{d/2}\right)^{1+\varepsilon}
 \end{displaymath}
     with a negligible error. We write $V$ as an inverse Mellin transform, shift the contour to the $\varepsilon$-line and get
\begin{displaymath}
\begin{split}
   % \sum_{\substack{\A \in \I(S)  \\  \n\A \leq N}}
   \left|L(1/2 + it, \chi_\A)\right|^2
   & \ll
    (NB)^{\varepsilon} \int_{-(NB)^{\varepsilon}}^{(NB)^{\varepsilon}}
    %\sum_{\substack{\A \in \I(S)  \\  \n\A \leq N}}
        \Bigl|
            \sum_{\substack{\B \subseteq\oh   \\  \n\B_1 \leq B}} \frac{\chi_\A(\B)}{\n\B^{1/2+\varepsilon \pm it+iy}}
        \Bigr|^2 dy \\
&\ll  (NB)^{\varepsilon} \int_{-(NB)^{\varepsilon}}^{(NB)^{\varepsilon}}
    %\sum_{\substack{\A \in \I(S)  \\  \n\A \leq N}}
        \Bigl|
            \underset{\substack{\B_1 \in \mathcal{I}(S)   \\  \n\B_1 \leq B}}{\left.\sum\right.^{\ast}} \frac{\chi_\A(\B_1)}{\n\B_1^{1/2+\varepsilon \pm it+iy}}
        \Bigr|^2 dy.
\end{split}
\end{displaymath}
%, it follows easily from  Cauchy-Schwarz that
%\begin{displaymath}
 % \Bigl|
   %         \sum_{\substack{\B \subseteq\oh   \\  \n\B \leq B}} \frac{\chi_\A(\B)}{\n\B^{1/2+\varepsilon \pm it+iy}}
     %   \Bigr|^2 \ll  \sup_{B' \leq B} \Bigl|
      %     \underset{\substack{\B_1 \in \I(S)    \\  \n\B_1 \leq B'}}{\left.\sum\right.^{\ast}} \frac{\chi_\A(\B_1)}{\n\B_1^{1/2+\varepsilon \pm it+iy}}
   %     \Bigr|^2.
%\end{displaymath}
We sum this over $\A = \A_0\A_1\A_2^n$ and conclude from Theorem \ref{thm:thm3} that
\begin{displaymath}
    \sum_{\substack{\A  \in \I(S) \\ \n\A \leq N}} |L(1/2+it, \chi_{\A})|^2 (\n\A_2)^{n-2+\varepsilon} \ll (NB)^{\varepsilon} \sum_{\n\A_1\A_2^n \ll N }   %\sup_{\substack{|y| \leq (NB)^{\varepsilon} \\B' \leq B}}
    \left(\frac{N}{\n\A_1\A_2^n} + B +\Bigl(\frac{NB}{\n\A_1\A_2^n}\Bigr)^{2/3}\right) (\n\A_2)^{n-2}.
\end{displaymath}
The desired bound \eqref{eq:sharper} follows now easily.
%$\A_0, \B_0$ are squarefree, $\A_1, \B_1$ are squareful and $(\A_0, \A_1) =  (\B_0, \B_1) = 1$.  The Corollary follows easily from Theorem \ref{thm:thm3}.
\end{proof}

%\noindent Note that the same argument shows \marginpar{Is this necessary?}
%\[
  %  \sum_{\substack{\A \in \I(S)  \\  \n\A \leq N}}
    %    \left|L_{S \cup S_{\A}}(1/2 + it, \chi_\A)\right|^2 |\frak{A}(1/2 + it, \textbf{1}, \A)|^2
  %  =
  %  \sum_{\substack{\A \in \I(S)  \\  \n\A \leq N}}
    %    \left|L^*(1/2 + it, \textbf{1}, \chi_\A)\right|^2
    %\ll
   % \left(N (1+|t|)^{d/2}\right)^{1+\varepsilon}.
%\]

\begin{proof}[Proof of Corollary \ref{cor:kor2a}]
Let $r$ be a sufficiently large real number such that the asymptotic formula \cite[Theorem 1.3]{FHL} for the first moment holds. Rcalling \eqref{eq:defL*}, an application of the Cauchy-Schwarz inequality together with  \eqref{eq:neededlater} %and the lower bound implicit in the asymptotic formula of \cite[Theorem 1.3]{FHL} for the first moment
gives:
\[
\begin{split}
   &\#\left\{ \A \in \I(S) \mid \n\A \leq N, \,\, L(1/2, \chi_\A) \neq 0 \right\}
     \geq
    \#\left\{\A \in \I(S) \mid \n\A \leq N, \,\, L^*(1/2, \textbf{1}, \chi_\A) \neq 0\right\}  \\
    &\phantom{HHHHH}\geq
    \Bigl(
        \sum_{\substack{\A \in \I(S)  \\  \n\A \leq N}}
            L^*(1/2, \textbf{1}, \chi_\A) \Bigl(1 - \frac{\n\A}{N}\Bigr)^r
    \Bigr)^2
    \Bigl(
        \sum_{\substack{\A \in \I(S)  \\  \n\A \leq N}}
            |L^*(1/2, \textbf{1}, \chi_\A)|^2 \Bigl(1 - \frac{\n\A}{N}\Bigr)^{2r}
    \Bigr)^{-1} \\
    &\phantom{HHHHH}\gg
    N^{1-\varepsilon}.
    \hfill{} \qedhere
\end{split}
\]
\end{proof}

\begin{proof}[Proof of Corollary \ref{cor:kor3}]
Consider the zero-detecting Dirichlet polynomial
\[
    M_{X}(s, \chi_\A) =
    \sum_{\substack{\B \in \I(S)  \\  \n\B \leq X}} \frac{\mu(\B) \chi_\A(\B)}{\n\B^s}
\]
($X \geq 1$). %, which is supported on squarefree ideals.
Note that we may replace $L(s, \chi_\A)$ by $L_S(s, \chi_\A)$ in Corollary \ref{cor:kor3} as their zeros coincide. We now closely follow \cite[Section 11]{HB}. Choosing another parameter $Y \geq X$, we see that
\begin{multline}
\label{eq:zerodens}
    \sumstar{   \n\A \leq N} N(\sigma, T, \A)
    \ll
    (NT)^{\varepsilon} T
    \bigg(
        Y^{1/2 - \sigma} \sup_{|u| \ll T}
        \sumstar{ \n\A \leq N}
            |L_S(1/2 + iu, \chi_\A) M_X(1/2 + iu,  \chi_\A)| \\
        + Y^{\varepsilon} \sup_{X \leq U \leq Y^{1+\varepsilon}} U^{-2\sigma}  \sumstar{   \n\A \leq N}
            \Bigl| \sum_{\substack{\B \in \I(S)  \\  U \leq \n\B \leq 2U}} c_{\B} \chi_\A(\B) \Bigr|^2
    \bigg)
\end{multline}
for a certain sequence of complex numbers $c_{\B} \ll \n\B^{\varepsilon}$, cf.\ the second display on p.\ 273 and the third display on p.\ 274 of \cite{HB}. By Cauchy-Schwarz, Corollary \ref{cor:kor2} and Theorem \ref{thm:thm3}, the first term is
\begin{equation}
\label{eq:term2}
    \ll (NTX)^{\varepsilon} T  Y^{1/2-\sigma} (N T^{d/2})^{1/2}
    \big(X+ N + (XN)^{2/3}\big)^{1/2}.
\end{equation}
For the second term we write $\B = \B_1\B_2$ where $\B_1$ is squarefree and $\B_2$ is squarefull and $(\B_1, \B_2) = 1$; by Cauchy-Schwarz we can bound the double sum by
\[
    \Bigl(
        \sum_{\substack{\B_2 \leq 2U\\ \B_2 \text{ squarefull}}}
        \Bigl(
            \sumstar{   \n\A \leq N}
            \Bigl|
                \sumstar{\substack{ U \leq \n\B_1 \B_2 \leq 2U\\ (\B_1, \B_2) = 1}} c_{\B_1 \B_2} \chi_\A(\B_1)
            \Bigr|^2
        \Bigr)^{1/2}
    \Bigr)^2.
\]
We split the $\A$ and $\B_1$ sums into residue classes modulo $R_{\mathfrak{c}}$ and apply reciprocity. Then we can use Theorem \ref{thm:thm3} to see that the previous display is at most
\[
    \ll (NU)^{\varepsilon}
    \Bigl(\sum_{\substack{\B_2 \leq 2U  \\  \B_2 \text{ squarefull}}}
    \Bigl((N + U + (NU)^{2/3}) \frac{U}{\n\B_2}\Bigr)^{1/2} \Bigr)^2
    \ll (NU)^{\varepsilon} \big(N+U+(NU)^{2/3}\big) U.
\]
Hence the second term in \eqref{eq:zerodens} is at most
\begin{equation}
\label{eq:term1}
    \ll (NY)^{\varepsilon}
    (NX^{1-2\sigma} + N^{2/3} X^{5/3-2\sigma} + N^{2/3} Y^{5/3-2\sigma} +
        Y^{2-2\sigma})
\end{equation}
for $1/2 \leq \sigma \leq 1$ and $1 \leq X \leq Y$. Note that of the two terms
$N^{2/3} X^{5/3-2\sigma} + N^{2/3} Y^{5/3-2\sigma}$
only one contributes to the sum: the latter if $\sigma < 5/6$ and the former if $\sigma \geq 5/6$.   Combining \eqref{eq:zerodens}, \eqref{eq:term2} and \eqref{eq:term1} we find
\begin{multline*}
    \sumstar{  \n\A \leq N} N(\sigma, T, \A)
    \ll
    (NTY)^{\varepsilon} T
    \Bigl(
        Y^{1/2-\sigma} (N T^{d/2})^{1/2} (X+ N + (XN)^{2/3})^{1/2} \\
            +  NX^{1-2\sigma} + N^{2/3} X^{5/3-2\sigma} + N^{2/3} Y^{5/3-2\sigma} + Y^{2-2\sigma}
    \Bigr).
\end{multline*}
It remains to optimize $X$ and $Y$ in terms of $T$, $N$ and $\sigma$. For $1/2 < \sigma \leq 5/6$ we can choose
\[
    X = N^{1/2}, \quad Y = N^{\frac{2}{7 - 6\sigma}} T^{\frac{d}{6-4\sigma}}
\]
to get
\begin{equation}
\label{eq:density1}
    \sumstar{  \n\A \leq N} N(\sigma, T, \A)
    \ll
    (NT)^{\varepsilon}
    N^{\frac{8(1-\sigma)}{7-6\sigma}}
    T^{d \frac{1-\sigma}{3-2\sigma}+1},
    \quad 1/2 < \sigma \leq 5/6.
\end{equation}
This is nontrivial in the $N$-aspect for $\sigma > 1/2$. In the range $5/6 \leq \sigma \leq 1$ we can choose
\[
    X = N^{1/2} + N^{\frac{6\sigma-4}{24\sigma - 12\sigma^2 - 11}},
    \quad
    Y =  N^{\frac{10\sigma-7}{24\sigma - 12\sigma^2 - 11}} T^{\frac{d}{6-4\sigma}}.
\]
This balances $Y^{1/2-\sigma}(N T^{d/2})^{1/2}  (XN)^{1/3} = Y^{2-2\sigma} = N^{2/3} Y^{5/3-2\sigma}$ in the $N$-aspect, while the choice for the $T$-exponent was just made for simplicity and is not optimal. Note that $X \leq Y$ and $N^{1/2} \leq X \leq N^2$, so that $X+Q+(XQ)^{2/3} \asymp (XQ)^{2/3}$. Hence we obtain
\begin{equation}
\label{eq:density2}
    \sumstar{   \n\A \leq N} N(\sigma, T, \A)
    \ll
    (NT)^{\varepsilon} Q^{\frac{2(10\sigma-7)(1-\sigma)}{24\sigma - 12\sigma^2 - 11}} T^{d \frac{1-\sigma}{3-2\sigma} + 1}
\end{equation}
for $ 5/6 < \sigma \leq 1$. This completes the proof.
\end{proof}

\section*{Appendix I}

We briefly sketch a convexity argument leading to \eqref{eq:convexity}.
To this end we assume
\begin{equation}
    \label{eq:assump}
    \sum_{\n\A \leq X}  |D(w, \A)|
    \ll
    X^{1+\varepsilon} |w|^{(n-1) d/4+\varepsilon}, \quad \Re w \geq 1/2,
\end{equation}
i.e.\ convexity in the $w$-aspect combined with Lindel\"of in the conductor aspect on average.
This is not a trivial fact, but can be proved using the same approach as in Corollary \ref{cor:kor2}. The bound \eqref{eq:assump} implies
\[
    Z_2(s, w) \ll |w|^{(n-1)d/4+\varepsilon} ,
    \quad \Re s = 1+\varepsilon, \quad \Re w = 1/2.
\]
Applying the first  functional equation in \eqref{eq:f1+}, we find after a short calculation
%\marginpar{This is unclear to me.}
\[
    Z_1(s, w) \ll |s|^{d/2+\varepsilon}|s+w|^{(n-1)d/4+\varepsilon},
    \quad \Re s = -\varepsilon, \quad \Re w = 1+\varepsilon.
\]
By \eqref{eq:f1} we conclude the same bound with $s$ and $w$ interchanged. Interpolating between the latter two bounds gives the desired convexity bound \eqref{eq:convexity}. Note that interpolating naively between $(\Re s, \Re w) = (1+\varepsilon, 1+\varepsilon)$ and $(-\varepsilon, -\varepsilon)$ via \eqref{eq:f1++} one can only prove
${Z_1(s, w) \ll} {(|s| |s+w|^{2(n-1)}|w|)^{d/4}}$ for $\Re s = \Re w = 1/2$, which shows that the notion of convexity is much more subtle in the context of multiple Dirichlet series.

\section*{Appendix II}

In order to keep this paper self-contained, we present here the precise details of the construction of the character $\chi_{\A}$. It slightly simplifies the presentation if we assume $n \geq 3$ which we may for the purpose of this paper.

Given $K$ and $n$, we fix once and for all a finite set of finite places $S$ of $K$ containing all divisors of $n$  such that the ring $\mathcal{O}_S$ of $S$-integers has class number 1.  We also fix once and for all an ideal $\mathfrak{c} = \prod_{\mathfrak{p} \in S_f} \mathfrak{p}^{e_{\mathfrak{p}}}$ where $e_{\mathfrak{p}} \in \Bbb{N}$ is sufficiently large to ensure that $\text{ord}_{\mathfrak{p}}(a-1) \geq e_{\mathfrak{p}}$ for $a \in K_{\mathfrak{p}}$ implies that $a \in K_{\mathfrak{p}}$ is an $n$-th power. If $\mathfrak{p} \nmid n$, we can choose $e_{\mathfrak{p}} = 1$. Here, as usual, $K_{\mathfrak{p}}$ denotes the completion of $K$ at $\mathfrak{p}$.  Let $H_{\mathfrak{c}}$ be the ray class group modulo $\mathfrak{c}$ and $R_{\mathfrak{c}} := H_{\mathfrak{c}} \otimes \Bbb{Z}/n\Bbb{Z}$.  Write $R_{\mathfrak{c}}$ as a product of cyclic groups and choose a generator for each. Let $\mathcal{E}_0 \subseteq \mathcal{I}(S)$ be a complete set of ideals representing these generators, and choose for each $E_0 \in \mathcal{E}_0$ some $m_{E_0} \in K^{\times}$ such that $(m_{E_0}) = E_0$ as $\mathcal{O}_S$-ideals. Let $\mathcal{E} \subseteq \mathcal{I}(S)$ be a complete set of representatives of $R_{\frakc}$.
Write each $E\in \mathcal{E}$ as $\prod E_0^{n_{E_0}}$ with $n_{E_0} \geq 0$ and let $m_E = \prod m_{E_0}^{n_{E_0}} \in K^{\times}$. Without loss of generality assume $(1) \in \mathcal{E}$ and $m_{(1)} = 1$.

For coprime $\mathfrak{a}, \mathfrak{b} \in \mathcal{I}(S)$ we decompose $\mathfrak{a} = (x) E \mathfrak{g}^n$ with $x \equiv 1$ (mod $\mathfrak{c}$), $E \in \mathcal{E}$ and $(\mathfrak{g}, \mathfrak{b}) = 1$. Then we define $\chi_{\mathfrak{a}}(\mathfrak{b}) := (xm_E/\mathfrak{b})$ where the latter is the usual $n$-th power residue symbol given by the Artin map. We recall that this symbol has the property that $(a/\mathfrak{b}) = 1$ if $a \equiv 1$ (mod $\mathfrak{b}$).

One can show that $\chi_{\mathfrak{a}}$ is well-defined \cite{FF, FHL} and gives a Hecke character modulo $\mathfrak{c} \mathfrak{a}$ (see \cite{GL}). We then agree to re-define $\chi_{\mathfrak{a}}$ as the underlying primitive Hecke character.  The construction of this character is not canonical, as it depends on all the choice made above. However, the vector space of double Dirichlet series spanned by $\{Z_1(s, w, \psi, \psi') \mid \psi, \psi' \in R_{\mathfrak{c}}\}$ is independent of these choices (\cite[Proposition 1.2]{FF}).

%\printindex

\end{document}